\documentclass[12pt,a4paper, reqno,dvipsnames]{amsart}

\usepackage{amsfonts}
\usepackage{amsmath}
\usepackage{amssymb,stmaryrd}
\usepackage{amsthm}
\usepackage{amsxtra}
\usepackage{bbm}
\usepackage{braket}
\usepackage{comment}
\usepackage{extarrows}
\usepackage{latexsym}
\usepackage{mathrsfs}
\usepackage{yhmath}
\usepackage{nicematrix}
\usepackage{mathtools}

\setcounter{MaxMatrixCols}{20}
\usepackage{float}
\usepackage{booktabs}
\usepackage{xcolor}
\usepackage{verbatim}
\usepackage{listings}
\lstset{breaklines=true, basicstyle=\footnotesize}
\usepackage[normalem]{ulem}
\usepackage[pdfborder={0 0 0}]{hyperref} 
\hypersetup{colorlinks, citecolor=blue, linkcolor=blue, urlcolor=blue}
\hypersetup{pagebackref}

\usepackage{bm}
\usepackage{graphicx}
\usepackage{subcaption}
\usepackage{tikz}
\usepackage{faktor}

\usetikzlibrary{fit}

\AtBeginDocument{%
    \def\MR#1{}
}

\usepackage{cleveref}

\usepackage[cm]{fullpage}

\makeatother

\theoremstyle{plain}
\newtheorem{Theorem}{Theorem}[section]
\newtheorem*{maintheorem}{Main Theorem}
\newtheorem{Lemma}[Theorem]{Lemma}
\newtheorem{Corollary}[Theorem]{Corollary}
\newtheorem{Proposition}[Theorem]{Proposition}

\theoremstyle{definition}

\newtheorem{Assumptions and Discussion}[Theorem]{Assumptions and Discussion}

\newtheorem{Example}[Theorem]{Example}
\newtheorem{Definition}[Theorem]{Definition}

\newtheorem{Problem}[Theorem]{Problem}

\newtheorem{Remark}[Theorem]{Remark}

\theoremstyle{remark}

\newtheorem*{acknowledgment*}{Acknowledgment}

\usepackage[shortlabels]{enumitem}
\SetEnumerateShortLabel{a}{\textup{(\alph*)}}
\SetEnumerateShortLabel{A}{\textup{(\Alph*)}}
\SetEnumerateShortLabel{1}{\textup{(\arabic*)}}
\SetEnumerateShortLabel{i}{\textup{(\roman*)}}
\SetEnumerateShortLabel{I}{\textup{(\Roman*)}}




\def\deg{\operatorname{deg}}

\def\dim{\operatorname{dim}}





\def\floor#1{\left\lfloor #1 \right\rfloor}

\def\Gr{\operatorname{Gr}}


\def\Ht{\operatorname{ht}} 

\def\id{\operatorname{id}}


\def\isom{\cong}


\def\ker{\operatorname{ker}}
\def\KK{{\mathbb K}}







\def\part{\operatorname{part}}



\def\sgn{\operatorname{sgn}}





\newcommand\calF{\mathcal{F}}

\newcommand\calH{\mathcal{H}}

\newcommand\calJ{\mathcal{J}}
\newcommand\calK{\mathcal{K}}
\newcommand\calL{\mathcal{L}}
\newcommand\calM{\mathcal{M}}

\newcommand\calP{\mathcal{P}}

\newcommand\calR{\mathcal{R}}

\newcommand{\Lap}{\operatorname{LAP}}

\newcommand{\Proj}{\operatorname{Proj}}
\newcommand{\rank}{\operatorname{rank}}
\newcommand{\rvline}{\hspace*{-\arraycolsep}\vline\hspace*{-\arraycolsep}}
\def\reg{\operatorname{reg}}



\def\aa{{\bf a}}
\def\LL{{\bf L}}
\def\kk{{\bf k}}

\def\bfi{{\bf i}}
\def\jj{{\bf j}}

\def\aa{{\mathbf{a} }}




\begin{document}

\title{Special fibers of coordinate sections of Hankel Matrices}

\author{K. Ansaldi, D. Lira,  M. Mostafazadehfard, K. Saloni, and 
 L.Seccia }

\thanks{2020 {\em Mathematics Subject Classification}.
    13A30, 
    14M12 
    13H15  	
    13D02  
}

\thanks{Keywords: Special fiber, Rees algebra, polar map, Hankel matrix, determinantal ideals, Koszul, regularity, multiplicity, reductions of ideals, $a$-invariant}

\address{Wabash College}
\email{ansaldik@wabash.edu}
\address{Federal Rural University of the Semi-Arid Region}
\email{dayane.lira@ufersa.edu.br}
\address{Federal University of Rio de Janeiro}
\email{maral@im.ufrj.br}
\address{Indian Institute of Technology Patna}
\email{ksaloni@iitp.ac.in}
\address{University of Neuchâtel}
\email{lisa.seccia@unine.ch}

\maketitle

\begin{abstract}
 We investigate the special fibers associated with certain coordinate sections of Hankel determinantal ideals. We provide explicit descriptions of their defining equations, showing that these equations admit a natural matrix structure. In particular, we prove that they are Cohen–Macaulay and cannot, in general, be minimally generated only by quadrics and cubics. Instead, we show that the degrees of their minimal generators grow with the size of the minors involved. In one case, we also prove that the Rees algebra is of fiber type. Additionally, we compute algebraic invariants of these special fibers. Our results partially build on and extend the work of Ramkumar and Sammartano on $2$-determinantal ideals and answer some of the questions posed by Cunha, Mostafazadehfard, Ramos, and Simis in earlier work. 
\end{abstract}

\section{Introduction}
Throughout this paper, by \textit{generic Hankel matrix} we mean a matrix of the form 

$$\calH_{l,m} \coloneqq \left(\begin{matrix}
    x_1 & x_2 & \cdots & x_{m}\\
    x_2 & x_3 &   \cdots & x_{m+1}\\
    \vdots &  \vdots && \vdots \\
    x_{l}& x_{l+1} &  \cdots & x_{l+m-1}
\end{matrix}
\right ),$$
where $x_1,\ldots,x_{l+m-1}$ are indeterminates over an infinite field $\KK$ of characteristic $0$. When $l=m$, the matrix $\calH_{l,l}$ is a generic symmetric matrix whose entries are constant along each anti-diagonal.\par 
By \textit{Hankel determinantal ideal} we mean the ideal $I_t(\calH)$ in the standard graded polynomial ring $\KK[x_1,\ldots,x_{l+m-1}]$ generated by the $t$-minors of $\calH$. Hankel determinantal ideals have been widely studied for their deep connections with algebraic geometry. Typical geometric objects defined by these ideals are rational normal scrolls and their secant varieties, but they also appear in applications to discrete geometry, graph theory, representation theory, and statistics (see e.g., \cite{AFS,Raicu2013}). \par

In commutative algebra these ideals are well understood thanks to the work of \cite{BV,Co,Ei88}. They define irreducible varieties and their homological properties and algebraic invariants have been determined. The structure and singularities of Hankel determinantal ideals have been further analyzed in \cite{CMRS, CMSV, Sec}. A substantial amount of work has addressed the problem of studying their blowup algebras. Determining the  defining equations of these algebras is a classical and challenging problem in algebra and geometry; it corresponds to finding the implicit equations of graphs and images of rational maps in projective space, and is therefore known as the \textit{implicitization problem}. Within the framework of Hankel determinantal ideals, this problem was first solved by Conca, Herzog, and Valla \cite{CHV} for \textit{balanced} normal scrolls, and later extended by Sammartano \cite{Sam20} to any rational normal scrolls. Building on these results, in \cite{RS} the authors also compute the defining equations of the blowup algebras of arbitrary $2$-determinantal ideals. The case of secant varieties of rational normal scrolls has instead been investigated in \cite{Nam, LS}. 
\medskip 

In this work we focus on blowup algebras of coordinate sections of Hankel determinantal ideals, obtained by setting the last $r$ entries of  $\calH$ to zero (for details, see Section \ref{sec:Hankelsec}). In particular, we consider sub-maximal minors, whose algebro-geometric properties were first studied in \cite{CMRS} and are naturally motivated by their geometric significance and connections to polar maps as explained in \cite{CRS}.\par We analyze the following two extremal degenerations: 
$$
\calH[n-1]\coloneqq\left(
	\begin{matrix}
	x_1&x_2&x_3&\ldots  & x_{n+1}\\
	x_2&x_3& x_4& \ldots & x_{n+2}\\
	\vdots &\vdots & \vdots  &\iddots  &\vdots \\
	x_{n}&x_{n+1}& x_{n+2} & \ldots   &0\\
	x_{n+1}&x_{n+2}& 0 & \ldots  & 0\\
	\end{matrix}
	\right)
\quad 
\calH[1]\coloneqq\left(
\begin{matrix}
x_1&x_2&\ldots &x_{n} & x_{n+1}\\
x_2&x_3&\ldots &x_{n+1}& x_{n+2}\\
\vdots &\vdots &\iddots &\vdots &\vdots\\
x_{n}&x_{n+1}&\ldots &x_{2n-1} &x_{2n}\\
x_{n+1}&x_{n+2}&\ldots &x_{2n}& 0\\
\end{matrix}
\right).
$$

The matrix $\calH[n-1]$ represents the most degenerate case, where the maximum number of entries is set to zero (i.e., the largest number of variables that can be set to zero without making any of the sub-maximal minors vanish). In contrast, the matrix $\calH[1]$ corresponds to the least degenerate case, in which there is exactly one zero in the bottom-right corner. \par
Our main results can be summarized as follows:

\begin{maintheorem}\label{mainthm}
   \begin{enumerate}[topsep=1.5pt,itemsep=1ex]
       \item[\emph{(1)}] Let $I=I_n(\calH[n-1])$ be the ideal of sub-maximal minors of $\calH[n-1]$:
       \begin{itemize}[topsep=1.5pt,itemsep=1ex]
           \item The special fiber ring $\calF(I)$ is Cohen-Macaulay and its defining ideal is generated by the Pl\"ucker relations and the Laplace relations \eqref{eq:lap}. These relations are all quadratic and form a Gr\"obner basis of the defining ideal of the special fiber ring $\calF(I)$. In particular, the special fiber is Koszul. 
           \item Let $J[n-1]$ be the coordinate sections of the gradient ideal of $\det \calH$. The rational map $\Theta_{J[n-1]}$, associated with $J[n-1]$ is a  birational map onto its image. As a consequence, we deduce the multiplicity of the special fiber of $J[n-1]$.
       \end{itemize}
       \item [\emph{(2)}] Let $I=I_n(\calH[1])$ be the ideal of sub-maximal minors of $\calH[1]$:
       \begin{itemize}[topsep=1.5pt,itemsep=1ex]
           \item The special fiber ring $\calF(I)$ is a Gorenstein ring whose defining ideal is minimally generated by the Pl\"ucker relations  and exactly one extra relation of degree $n$, arising from the vanishing of the determinant of the square matrix of size $n^2$ \eqref{eq:polynomial f}.
           \item The Rees algebra $\calR(I)$ is of fiber type; hence its defining equations are given by the relations defining the special fiber, and the Eagon-Northcott relations defining the symmetric algebra. Moreover, $I$ has linear powers although the Rees algebra is not Koszul.
       \end{itemize}
   \end{enumerate}
\end{maintheorem}
These results provide a partial answer to some of the questions asked in \cite[Question 2.9]{CMRS}. In both cases, the special fiber is Cohen–Macaulay; in the second case it is even Gorenstein, and the ideal is of fiber type. However, we show that in general the special fiber is not minimally generated by quadrics and cubics as conjectured in \cite{CMRS}. Instead, the degrees of the minimal generators depend on both the number of zeros and the size of the matrix.

It is worth emphasizing that all the relations we find have a nice matrix structure; that is, they can be derived from the vanishing of certain determinants using generalized Laplace expansion. This is particularly significant, as relations among non-maximal minors are notoriously difficult to describe, especially when higher-degree relations beyond the Plücker quadrics appear, as in our case. \par 
Notice  that all the ideals considered in this work have codimension $3$ but are not Gorenstein. Results on blowup algebras of codimension 3 Gorenstein ideals can be found in \cite{KPU17,Mo96}. In particular, they are all $n$-determinantal ideals in the sense of \cite{RS}; that is, ideals of $n$-minors of expected codimension. For these reasons, we expect that our results may provide some insights toward a  generalization of the results in \cite{RS} from $2$-determinantal ideals to $n$-determinantal ideals of codimension $3$. In this context, the matrix $H[n-1]$, obtained from $\calH[n-1]$ by rearranging its entries into a rectangular $n\times (n+2)$ matrix (see Equation \eqref{Hrecatngular}), might be interpreted as a higher-dimensional analog of the Jordan block with eigenvalue 0 used in \cite{RS} to construct the Kronecker-Weierstrass (KW) normal form of a $2$-determinantal ideal. In proving Main Theorem~(1), we prove in fact that the special fiber of $I_n(\calH[n-1])$ is isomorphic to that of the ideal of 2-minors of

$$ \begin{pmatrix}
    x_1 & x_2 &  x_3 & \cdots & x_{n+2}\\
    x_2 & x_3 &  \cdots & x_{n+2}& 0
\end{pmatrix}.$$

In the notation of \cite{RS}, this is an ideal in $\mathscr{H}_{n+1,0}$; that is, a 2-determinantal ideal of codimension $n+1$ with no scroll blocks and exactly one Jordan block with eigenvalue 0 in its KW normal form. However, we stress that the case $n>2$ is more delicate, since no analog of the KW normal form is known for $n$-determinantal ideals, and it is plausible that such a classification does not exist.\par
Finally, we emphasize that the birationality result in Main Theorem~(1) is particularly notable, as it yields the multiplicity of the special fiber of a coordinate section of the gradient ideal. This leads to a new interpretation of reduction ideals with maximal possible multiplicity, which we refer to as \textit{maximal reductions}. These ideals simultaneously satisfy both properties: they are reduction ideals of $I$, and the rational map associated to them is birational.

\subsection*{Outline of the paper}
We briefly provide a description of the contents of each section. Section \ref{sec:prelim} is devoted to introducing the notation and recalling some of the definitions and known results that will be used throughout the paper. Section \ref{sec:extremZero} focuses on the most degenerate case, corresponding to the matrix $\calH[n-1]$. To compute the defining equations of the special fiber of $I_n(\calH[n-1])$, we reduce to the case of $2$-determinantal ideals studied by Ramkumar and Sammartano in \cite{RS} (Theorem \ref{thm:eqSF}). Inspired by the classical duality property of the Grassmannian, we achieve this reduction using the 
$\KK$-algebra isomorphism \eqref{eq:Kisom} induced by the isomorphism between the posets of minors of the corresponding Grassmannians (see Remark \ref{rmk:isoposet}). As a consequence we deduce that the special fiber has a Gr\"obner basis generated by quadrics, and so it is Koszul (Theorem \ref{cor:SFKoszul}). We conclude by studying the birationality of the map associated with a special reduction of the ideal $I_n(H[n-1])$ (Section \ref{sec:geometry}).
Section \ref{sec:r=1} addresses the opposite situation, where exactly one variable is set to zero. We show that in this case the special fiber is Gorenstein and is generated by the Plücker relations together with exactly one additional relation of degree $n$, which arises from the vanishing of an  $n^2$-minor in the variables $x_i$ (Theorem \ref{thm:eqSFr=1}). Moreover, we prove that the ideal has linear powers and is of fiber type, so we have an explicit description of the equations of the Rees algebra (Proposition \ref{prop:r=1 fiber type}).  In addition to the previous results, we compute in both cases algebraic invariants such as regularity, multiplicity, reduction number and a-invariant (Corollaries \ref{mult}, \ref{cor:alginvariant r=n-1}, \ref{cor:alginvariant r=1}, \ref{cor: mult}).

\section{Preliminaries and notations}\label{sec:prelim}

\subsection{Coordinate sections of Hankel matrices}\label{sec:Hankelsec}
Throughout this paper, we consider generic square \emph{Hankel matrices} of the form 
$$\calH_{n+1,n+1} \coloneqq \left(\begin{matrix}
    x_1 & x_2 &  x_3 & \cdots & x_{n+1}\\
    x_2 & x_3 &  \cdots & \cdots & x_{n+2}\\
    x_3 & \cdots & \cdots& \cdots & x_{n+3}\\
    \vdots & \iddots& \vdots &\iddots& \vdots \\
    x_{n+1}& x_{n+2} & x_{n+3} & \cdots & x_{2n+1}
\end{matrix}
\right ).$$
By \textit{coordinate section} of $\calH$ we mean the matrix obtained from $\calH$ by setting certain variables equal to zero. Geometrically, this corresponds to taking coordinate hyperplane sections of the ambient space, which justifies the terminology. Motivated by \cite{CMRS}, in this paper we consider coordinate sections on the last $r$ variables for $r=0,1,\ldots,n-1$. We  denote the resulting matrix by $\calH[r]$:

$$
\calH[1]\coloneqq\left(
\begin{matrix}
x_1&x_2&\ldots &x_{n} & x_{n+1}\\
x_2&x_3&\ldots &x_{n+1}& x_{n+2}\\
\vdots &\vdots &\iddots &\vdots&\vdots \\
x_{n}&x_{n+1}&\ldots &x_{2n-1} &x_{2n}\\
x_{n+1}&x_{n+2}&\ldots &x_{2n}& 0\\
\end{matrix}
\right),
\;
\calH[2]\coloneqq\left(
\begin{matrix}
x_1&x_2&\ldots &x_{n-1} &x_{n} & x_{n+1}\\
x_2&x_3&\ldots& x_{n} &x_{n+1}& x_{n+2}\\
\vdots &\vdots &\iddots &\vdots &\vdots &\vdots \\
x_{n-1}&x_{n}&\ldots &x_{2n-3} &x_{2n-2} &x_{2n-1}\\
x_{n}&x_{n+1}&\ldots &x_{2n-2} &x_{2n-1} &0\\
x_{n+1}&x_{n+2}&\ldots &x_{2n-1} &0& 0\\
\end{matrix}
\right),
$$

	$$\ldots, 
	\calH[n-1]\coloneqq\left(
	\begin{matrix}
	x_1&x_2&x_3&\ldots &x_{n} & x_{n+1}\\
	x_2&x_3& x_4& \ldots &x_{n+1}& x_{n+2}\\
	x_3&x_4& x_5 &\ldots  &x_{n+2}& 0\\
	\vdots &\vdots & \vdots  &\iddots &\vdots &\vdots \\
	x_{n}&x_{n+1}& x_{n+2} & \ldots  & 0 &0\\
	x_{n+1}&x_{n+2}& 0 & \ldots  &0& 0\\
	\end{matrix}
	\right).
	$$

In this notation, we retrieve the generic case $\calH$ as $\calH[0]$. For a given $r$, the ground ring for $\calH[r]$ is the polynomial ring $S_r=\mathbb{K}[x_1, \ldots,x_{2n+1-r}]$. However, if
no confusion arises, when $r$ is fixed in the discussion, we will denote this ring simply by $S$. \\

We will be interested in studying blowup algebras of the ideals generated by sub-maximal minors of $\calH[r]$. As is customary in the literature on determinantal ideals, we denote them by $I_n(\calH[r])$.

A key structural property about Hankel matrices allows us to reduce the study of sub-maximal minors to that of maximal minors. This was first observed by Gruson–Peskine \rm(\cite{GP82}) for generic Hankel matrices. However, the same holds for coordinate sections of $\calH$ (see \cite[Lemma 1.4]{CMRS} for an explicit statement). 

\begin{Proposition}[\cite{CMRS,GP82}]\label{Prop:maxminors}$I_n(\calH_{n+1,n+1}[r])=I_n(\calH_{n,n+2}[r])$, for all $r=1,\ldots,n-1$. 
\end{Proposition}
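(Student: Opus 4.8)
The plan is to deduce the statement from the corresponding equality for \emph{generic} Hankel matrices, namely $I_n(\calH_{n+1,n+1})=I_n(\calH_{n,n+2})$ as ideals of $R\coloneqq\KK[x_1,\dots,x_{2n+1}]$. This is the Gruson--Peskine principle that for a generic Hankel matrix the ideal $I_t(\calH_{l,m})$ depends only on $t$ and on $l+m$, provided $t\le\min\{l,m\}$: here $l+m=2n+2$ for both shapes, and $t=n$ equals $\min\{n,n+2\}$ and is below $\min\{n+1,n+1\}$, so the hypothesis (which permits equality) is satisfied. I would take this as the single external input, citing \cite{GP82} (or \cite[Lemma 1.4]{CMRS}).

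Granting it, the passage to coordinate sections is a purely formal specialization. Fix $r\in\{1,\dots,n-1\}$ and let $\pi\colon R\onto S_r=\KK[x_1,\dots,x_{2n+1-r}]$ be the $\KK$-algebra surjection sending $x_{2n+2-r},\dots,x_{2n+1}$ to $0$ and fixing the remaining variables. By the definition of the coordinate sections, $\pi$ carries $\calH_{n+1,n+1}$ to $\calH_{n+1,n+1}[r]$ and $\calH_{n,n+2}$ to $\calH_{n,n+2}[r]$ entry by entry; since the determinant of an $n\times n$ submatrix is a polynomial in its entries, $\pi$ then sends each $n$-minor of $\calH_{n+1,n+1}$ to the corresponding $n$-minor of $\calH_{n+1,n+1}[r]$ and surjects onto the set of all such minors, and likewise for the $n\times(n+2)$ shape. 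Because $\pi$ is surjective, the image of an ideal is the ideal generated by the images of any set of generators, so $\pi\bigl(I_n(\calH_{n+1,n+1})\bigr)=I_n(\calH_{n+1,n+1}[r])$ and $\pi\bigl(I_n(\calH_{n,n+2})\bigr)=I_n(\calH_{n,n+2}[r])$ in $S_r$. Applying $\pi$ to the generic equality above yields $I_n(\calH_{n+1,n+1}[r])=I_n(\calH_{n,n+2}[r])$, which is the claim.

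All the substance is therefore hidden in the input equality, and that is the step I expect to be the obstacle (and the reason to cite rather than reprove). A purely elementary attempt uses ``window sliding'': translating all chosen row indices down by an integer $c$ and all chosen column indices up by $c$ leaves a Hankel minor unchanged, and this already identifies \emph{most} maximal minors of $\calH_{n,n+2}$ with $n$-minors of $\calH_{n+1,n+1}$ and conversely. It breaks down precisely for the minors that do not fit into a window of the opposite shape --- on the $n\times(n+2)$ side, those using both the first and the last column (which span the whole range $x_1,\dots,x_{2n+1}$); on the $(n+1)\times(n+1)$ side, those whose selected rows are not consecutive. Expressing these remaining minors in terms of the others requires the Plücker/Laplace straightening relations among Hankel minors, which is exactly the content of the Gruson--Peskine argument; I would invoke their result (equivalently \cite[Lemma 1.4]{CMRS}) and confine the proof to the short specialization step above.
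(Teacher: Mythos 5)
Your proposal is correct. The paper offers no proof of this proposition at all --- it is stated as a citation to \cite{GP82} and \cite[Lemma 1.4]{CMRS} --- and your argument rests on exactly the same essential input (the Gruson--Peskine equality $I_n(\calH_{n+1,n+1})=I_n(\calH_{n,n+2})$ for the generic Hankel matrix), while the additional specialization step is valid: since $\pi$ is a surjective ring homomorphism commuting with taking determinants entrywise, it carries each ideal of $n$-minors onto the corresponding ideal of $n$-minors of the coordinate section, so the generic equality descends to $S_r$ for every $r$.
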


      Since it is often more convenient to work with maximal minors, from now on we denote by $H[r]$ the rectangular $n \times (n+2)$ degenerate Hankel matrix, that is 
     \begin{equation}\label{Hrecatngular}
         H[r]:=\calH_{n,n+2}[r]=\left(\begin{matrix}
   x_1 & \ldots&  x_{n+2-r} & \cdots &\cdots & x_{n+2}\\
    x_2 & \ldots &  x_{n+3-r} & \cdots &\cdots & x_{n+3}\\
   \vdots &  & \vdots & &  & \vdots \\
   x_{n-r}& \ldots & x_{2n+2-r} & \cdots &\cdots & x_{2n+1-r}\\
   \vdots&  & \vdots &  & \iddots &0\\
    \vdots & & \vdots &\iddots&  &\vdots \\
    x_{n}& \ldots & x_{2n+1-r} & 0 &\cdots & 0
\end{matrix}
\right ).
     \end{equation}

\medskip

 It is well known that  the ideal  $I_t(\mathcal{H}_{n+1,n+1})$ is prime and has expected codimension $2n-2t+3$ for any $1\leq t\leq n+1$. In particular, for $t=n$, we get $\Ht (I_n(\mathcal{H}_{n+1,n+1}))=3$. The same properties hold for the corresponding coordinate sections; for details, we refer the reader to \cite[Proposition 4.3]{Ei88} and  \cite[Proposition 2.4]{CMRS}. In particular, $I_n(\mathcal{H}[r])$ defines a Cohen-Macaulay ring.

\subsection{Blowup algebras}
The focus of our work is examining blowup algebras associated to the ideal $I_n(H[r])$ where $1\leq r\leq n-1$. For $0\leq r\leq n-1$ the \emph{Rees Algebra} 
and the \textit{special fiber ring} of $I_n(H[r])$ are denoted by $\calR[r]$ and $\calF[r]$, respectively, and are defined as:

$$\calR[r]:=S[I_n(H[r])t]=\bigoplus_{a\geq 0} (I_n(H[r]))^at^a\subset S[t]$$
$$\calF[r]:=\calR[r]\otimes_S \mathbb{K}\cong \mathbb{K}[I_n(H[r])t].$$
Since all generators of $I_n(H[r])$ are homogeneous of the same degree, it follows that $\mathcal{F}[r]\cong \mathbb{K}[I_n(H[r])].$
Both algebras can be realized as quotients of the polynomial rings

$$\calP_{\calR}\coloneqq S[T_{\bfi} |\ \bfi=\{i_1,\ldots, i_n\}\ \text{is an $n$-subset of} \ \{1,\ldots,n+2\}],$$
and
$$\calP_{\calF}\coloneqq \mathbb{K}[T_{\bfi} |~\bfi=\{i_1,\ldots, i_n\} ~\text{is an $n$-subset of}~ \{1,\ldots,n+2\}].$$ By convention, indices are considered up to permutation; for example, $T_{\{1,3,4\}}=T_{\{3,4,1\}}=T_{\{4,1,3\}}$.

Consider the following surjective homomorphisms 

\begin{equation}\label{naturalHom}
\begin{aligned}
\phi: \mathcal{P}_{\mathcal{R}} &\longrightarrow \mathcal{R}[r] &{\rm{ and }} &&
\psi: \mathcal{P}_{\mathcal{F}} &\longrightarrow \mathcal{F}[r] \\
T_{\bfi} &\longmapsto [\bfi]\,t &&&
T_{\bfi} &\longmapsto [\bfi] 
\end{aligned}
\end{equation}

where $[\bfi] $ denotes the maximal minor corresponding to the ordered set of columns $\{i_1, \ldots,i_n\}$.
The kernels of these maps, denoted by
$$\calJ[r]:=\mathrm{ker}(\phi)\quad {\rm{ and }} \quad   \calK[r]:= \mathrm{ker}(\psi),$$
are called the\textit{ defining} or \textit{presentation} \textit{ideals} of the Rees algebra $\calR[r]$ and the special fiber ring $\calF[r]$ respectively. 
\medskip 

The Rees algebra $\calR[r]$ has a natural \emph{bigrading} given by 
\[
\deg(x_i) = (1, 0)
\quad  \quad
\deg(T_{\bfi}) = (0, 1)
\quad  \quad
\deg(t)=(-n,1). \]
 With this assignment, the $\mathbb{K}$-algebra homomorphisms $\phi$ and $\psi$ are homogeneous of degree zero, and the special fiber ring $\calF[r]$ is the homogenous subring $\calF[r]\subset \calR[r]$ concentrated in bidegrees $(0,*)$. 
 The bidegree $(k,l)$ in $\mathcal P_{\mathcal R}$ is defined so that the first component $k$ records the polynomial degree in $S$, while the second component $l$ corresponds to the degree in the variables $T_{i}$ over $S$, or equivalently, to the power of the variable $t$.

The dimension of the special fiber ring of an ideal $I$ is an important invariant, called the \textit{analytic spread} of $I$, and denoted as $\ell(I)=\dim(\calF(I))$. This invariant is closely related to reduction ideals and birationality (see Section \ref{sec:reduction} and Section \ref{sec:geometry}).

\begin{Proposition} \cite[Proposition 1.89]{Vas}\label{Prop1.7}
    Let $I$ be an ideal in a standard graded polynomial ring $S$. The following inequalities hold
$$\mathrm{ht}(I)\leq \ell(I) \leq \min\{\mu(I), \mathrm{dim}(S)\},$$
where $\mu(I)$ is the minimal number of generators of $I$.
We say that $I$ has \text{maximal analytic spread} if $$\ell(I) =\min\{\mu(I), \mathrm{dim}(S)\}.$$
\end{Proposition}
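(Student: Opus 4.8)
The plan is to establish the two upper bounds by direct dimension counts on the special fiber ring and the lower bound via the Northcott--Rees theory of minimal reductions. First I would record the concrete description $\mathcal{F}(I)=\mathcal{R}(I)\otimes_S\mathbb{K}=\bigoplus_{a\ge 0}I^a/\mathfrak{m}I^a$, where $\mathfrak{m}$ is the homogeneous maximal ideal and $\mathbb{K}=S/\mathfrak{m}$, so that $\ell(I)=\dim\mathcal{F}(I)$. Since $\mathcal{R}(I)$ is generated over $S$ by $It$, the ring $\mathcal{F}(I)$ is a standard graded $\mathbb{K}$-algebra generated by its degree-one piece $I/\mathfrak{m}I$, which has $\mathbb{K}$-dimension $\mu(I)$; hence $\dim\mathcal{F}(I)\le\mu(I)$.

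For the bound $\ell(I)\le\dim S$, in the case relevant to this paper (where $I$ is generated in a single degree) the assignment $T_{\bfi}\mapsto f_{\bfi}$ identifies $\mathcal{F}(I)$ with the sub-$\mathbb{K}$-algebra $\mathbb{K}[f_1,\dots,f_{\mu}]\subseteq S$ generated by a minimal generating set of $I$. Being a finitely generated $\mathbb{K}$-subalgebra of the domain $S$, its Krull dimension equals $\trdeg_{\mathbb{K}}\operatorname{Frac}(\mathbb{K}[f_1,\dots,f_\mu])\le\trdeg_{\mathbb{K}}\operatorname{Frac}(S)=\dim S$. (In general, one argues instead that $\operatorname{Proj}\mathcal{F}(I)$ is the fiber over the closed point of the birational blowup morphism $\operatorname{Bl}_I(\operatorname{Spec}S)\to\operatorname{Spec}S$, whose fibers have dimension at most $\dim S-1$.) Combining with the previous paragraph yields $\ell(I)\le\min\{\mu(I),\dim S\}$.

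For the lower bound $\operatorname{ht}(I)\le\ell(I)$, I would invoke the theory of reductions, available because $\mathbb{K}$ is infinite. Choosing a generating set $f_1,\dots,f_\mu$ of $I$, a sufficiently general $\mathbb{K}$-linear combination of $\ell:=\ell(I)$ of them generates a minimal reduction $J\subseteq I$: one has $\mu(J)=\ell$ and $I^{a+1}=JI^a$ for $a\gg0$. The reduction equation forces $I^{a+1}\subseteq J$, hence $I\subseteq\sqrt{J}$, so $\sqrt{I}=\sqrt{J}$ and therefore $\operatorname{ht}(I)=\operatorname{ht}(J)$. Krull's generalized principal ideal theorem gives $\operatorname{ht}(J)\le\mu(J)=\ell(I)$, which closes the chain of inequalities. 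The final sentence of the statement is merely the definition of \emph{maximal analytic spread} and requires nothing further.

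The main obstacle is the single genuinely nontrivial input: the existence of a minimal reduction with exactly $\ell(I)$ generators, equivalently the fact that the analytic spread computes the minimal number of generators of a minimal reduction of $I$. This is the Northcott--Rees theorem, proved by a prime-avoidance / generic-choice argument performed inside the fiber ring $\mathcal{F}(I)$, and it is precisely here that the hypothesis that $\mathbb{K}$ is infinite is used; all the remaining steps are formal consequences of Krull dimension theory.
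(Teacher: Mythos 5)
The paper does not prove this proposition; it is quoted verbatim from the cited reference (Vasconcelos, Proposition 1.89), so there is no internal proof to compare against. Your argument is correct and is the standard one: the bound $\ell(I)\le\mu(I)$ from degree-one generation of $\mathcal{F}(I)$, the bound $\ell(I)\le\dim S$ from the subalgebra/fiber-dimension count, and $\operatorname{ht}(I)\le\ell(I)$ via a minimal reduction $J$ with $\mu(J)=\ell(I)$ (Northcott--Rees, needing $\mathbb{K}$ infinite, exactly as in the paper's Proposition~\ref{Prop1.6}) combined with $\sqrt{J}=\sqrt{I}$ and Krull's height theorem; you correctly isolate the existence of such a reduction as the only nontrivial input.
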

     
\begin{Remark}\label{rmk:maxanspread}
    By \cite[Proposition 2.8]{CMRS} all the ideals considered in this paper have maximal analytic spread, that is, $\ell(I_n(H[r])=\dim S_r=2n-r+1$.
\end{Remark}

Finding the defining equations of $\mathcal{F}[r]$ and $\mathcal{R}[r]$ is a difficult task, but it plays a key role in the study of their homological and geometric properties (see, e.g., \cite{BCV}, \cite{CHV}, \cite{Sam20}). Of special interest is the case when the Rees algebra is defined by the equations of the special fiber, together with those of the symmetric algebra.

\begin{Definition}
  The ideal $I_n(H[r])$ is said to be \emph{of fiber type} if $\mathcal{J}[r]$ is generated in bidegrees $(0,*)$ and $(*,1)$. Equivalently, the defining equations of $\calR[r]$ consist of those of $\calF[r]$ together with those arising from the first syzygies of $I_n(H[r])$.
\end{Definition}

\subsection{Reductions of an ideal}\label{sec:reduction}
It is not our intent to review the theory of reductions of ideals, the interested reader can consult directly \cite{HS}. However, we wish to introduce a
few concepts for the sake of clarity.
\medskip

A reduction of an ideal $I$ is, roughly speaking, a smaller ideal that controls the asymptotic behavior of the powers of $I$.  
\begin{Definition}\cite[Definition 1.2.1 and 8.3.1]{HS}
   Let $I$ be an ideal in a ring $R$. An ideal $J\subset I$ is called a \emph{reduction} of $I$ if $JI^k=I^{k+1}$ for some integer $k$. A reduction $J$ of $I$ is called a \emph{minimal reduction} if it is minimal with respect to inclusion, i.e., if it does not properly contain another reduction of $I$.
\end{Definition}

We now recall the notion of absolute reduction number of an ideal $I$, an algebraic invariant that measures how far a reduction is from generating the powers of $I$. 
\begin{Definition} \cite[Definitions 8.2.3]{HS}
  Let $J$ be a reduction of $I$. The \textit{reduction number} of
$I$ with respect to $J$ is the minimum integer $k$ such that $JI^k=I^{k+1}$. It is denoted by ${\rm r}_J(I)$. \par 
The \emph{absolute reduction number of $I$} is $$\mathrm{min}\{{\rm r}_J(I): J ~~\text{is a minimal reduction of}~~ I\}.$$ 
\end{Definition}

In general, a minimal reduction of an ideal does not always exist, even over a Noetherian ring. However, when working over a Noetherian local ring or a standard graded polynomial ring (as in our case), minimal reductions always exist. Although they are not unique in general, under mild hypotheses, the number of generators of any minimal reduction is the same, and it is equal to the analytic spread of the ideal.

\begin{Proposition}\cite[Proposition 8.3.7]{HS}\label{Prop1.6}
    If $S$ is a standard graded polynomial ring with \textit{infinite} residue field and $J$ is a reduction of $I$, then $\mu(J)=\ell(I)$ if and only if $J$ is a minimal reduction of $I$.
\end{Proposition}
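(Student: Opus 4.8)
The statement is \cite[Proposition~8.3.7]{HS}, so the plan is to recall the standard proof, which takes place entirely inside the fiber cone $\calF(I)=\calR(I)\otimes_S\mathbb K=\bigoplus_{n\ge 0}I^n/\mm I^n$. This is a finitely generated graded $\mathbb K$-algebra with $\calF(I)_0=\mathbb K$; write $d:=\dim\calF(I)=\ell(I)$ and recall $\Ht\calF(I)_+=d$, where $\calF(I)_+$ is the irrelevant maximal ideal. The first step is the \emph{fiber-cone criterion} for reductions: for $a_1,\dots,a_s\in I$ with images $a_1^\ast,\dots,a_s^\ast$ in $\calF(I)_1=I/\mm I$, the ideal $J=(a_1,\dots,a_s)$ is a reduction of $I$ if and only if the ideal $(a_1^\ast,\dots,a_s^\ast)\calF(I)$ is primary to $\calF(I)_+$. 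I would prove this by graded Nakayama: the equality $JI^n=I^{n+1}$ is equivalent to $JI^n+\mm I^{n+1}=I^{n+1}$, i.e.\ to $(a_1^\ast,\dots,a_s^\ast)\calF(I)_n=\calF(I)_{n+1}$, and once this holds for one $n$ it holds for all larger $n$; hence $J$ is a reduction of $I$ if and only if $\calF(I)/(a_1^\ast,\dots,a_s^\ast)\calF(I)$ has finite length, i.e.\ $(a_1^\ast,\dots,a_s^\ast)$ is $\calF(I)_+$-primary. From here the whole statement becomes a question about graded ideals of $\calF(I)$.

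Next I would settle ``$J$ a minimal reduction $\Rightarrow\mu(J)=\ell(I)$''. If $J=(a_1,\dots,a_s)$ is any reduction with a minimal generating set, the criterion and Krull's height theorem give $\mu(J)=s\ge\Ht(a_1^\ast,\dots,a_s^\ast)=\Ht\calF(I)_+=d$, so the inequality $\mu(J)\ge\ell(I)$ is free. For the reverse, assume $J$ is minimal but $s>d$. Since $\mathbb K$ is infinite and $(a_1^\ast,\dots,a_s^\ast)$ is $\calF(I)_+$-primary, a sufficiently general choice of $d$ linear combinations $b_k^\ast=\sum_i\gamma_{ki}a_i^\ast$ (with $\gamma_{ki}\in\mathbb K$) again generates an ideal of height $d$, hence a $\calF(I)_+$-primary ideal. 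Lifting the scalars to $\mathbb K\subset S$ and setting $b_k=\sum_i\gamma_{ki}a_i\in J$ (whose image in $\calF(I)_1$ is $b_k^\ast$), the criterion shows $J':=(b_1,\dots,b_d)$ is a reduction of $I$; moreover $J'\subsetneq J$, since $J'=J$ would force $\mu(J)\le d<s$. This contradicts minimality of $J$, so $\mu(J)=d=\ell(I)$.

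Then I would prove the converse ``$\mu(J)=\ell(I)\Rightarrow J$ minimal''. Fix a minimal generating set $a_1,\dots,a_d$ of the reduction $J$, so $\dim_{\mathbb K}J/\mm J=d$. If $J$ were not minimal, choose a reduction $J'\subsetneq J$ of $I$. Its minimal generators do not generate $J$, so by graded Nakayama their images in $J/\mm J$ span a subspace of dimension at most $d-1$; pushing these forward along the natural map $J/\mm J\to I/\mm I=\calF(I)_1$, the initial forms of the generators of $J'$ span a subspace of $\calF(I)_1$ of dimension at most $d-1$. Hence the ideal of $\calF(I)$ generated by the initial forms of $J'$ is generated by at most $d-1$ elements, so has height at most $d-1<d=\Ht\calF(I)_+$ and cannot be $\calF(I)_+$-primary. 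By the criterion, $J'$ is then not a reduction of $I$, a contradiction. Therefore $J$ is a minimal reduction.

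The only step that is not purely formal is the passage, over the infinite field $\mathbb K$, from a $\calF(I)_+$-primary ideal with given degree-one generators to a height-$d$ subideal generated by $d$ general $\mathbb K$-linear combinations of them (a homogeneous system of parameters of $\calF(I)$). This prime-avoidance/Noether-normalization argument is exactly where the hypothesis ``infinite residue field'' is indispensable, and it is the point where the statement genuinely fails over finite fields. The remaining ingredients — graded Nakayama and Krull's height theorem — are routine.
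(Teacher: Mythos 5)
The paper does not prove this statement at all---it is quoted verbatim from \cite[Proposition 8.3.7]{HS} as background---so there is no internal argument to compare against. Your reconstruction via the fiber cone (reductions correspond to $\calF(I)_+$-primary ideals generated in degree one, minimal reductions to homogeneous systems of parameters of $\calF(I)$, with the infinite field entering through graded prime avoidance) is exactly the standard textbook proof and is sound; the only point worth flagging is that the graded Nakayama steps (e.g.\ $\mu(J)=\dim_{\mathbb K}J/\mm J$ and $JI^n+\mm I^{n+1}=I^{n+1}\Rightarrow JI^n=I^{n+1}$) implicitly assume $J$ homogeneous or a passage to the localization at $\mm$, a convention the paper itself also leaves tacit.
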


\subsection{Eagon-Northcott and Pl\"ucker relations}
We conclude this section by reviewing classical relations among maximal minors that will be used later, as they arise naturally in the defining equations of the blowup algebras of $I_n(H[r])$. 
\medskip

\subsubsection{Eagon-Northcott relations}  The ideal $I_n(H[r]) $ is perfect of expected codimension, and it is generated by the maximal
minors of a matrix with linear entries. Therefore, it has a linear minimal free resolution given by the Eagon–Northcott complex. Here, we only recall the first linear syzygies arising from this resolution, and we refer the interested reader to \cite[Chapter 2.C]{BV} for further details.
\medskip

Let $i_1, \ldots, i_{n+1} \in [n+2]$ be arbitrary increasing column indices, and let $k\in [n]$. Consider the
$(n+1) \times (n+1)$ matrix

$$\begin{pmatrix}
    C_{i_1}& C_{i_2} &\ldots &C_{i_{n+1}}\\
    x_{i_1+k-1} &x_{i_2+k-1} &  \ldots&  x_{i_{n+1}+k-1}
\end{pmatrix},$$
where $C_{i_j}$ denotes the $i_j$-th column of $H[r]$ and we set $x_{i_j+k-1}=0$ if $i_j+k-1> 2n+1-r$. In other words, the $k$-th row of $\begin{pmatrix}
    C_{i_1}& C_{i_2} &\ldots &C_{i_{n+1}}\
\end{pmatrix}$ is repeated. Since the determinant of this matrix is zero, expanding along the last row yields the linear syzygy

$$\sum \limits_{j=1}^{n+1} x_{i_j+k-1} [i_1 \ldots \widehat{i_j} \ldots i_{n+1}]=0,$$
where $\ \widehat{}\ $ denotes omission. Therefore, the symmetric algebra of $I_n(H[r]) $ is generated by the following relations:

\begin{equation}\label{eq:ENeq}
    \sum \limits_{j=1}^{n+1} x_{i_j+k-1} T_{i_1 \ldots \widehat{i_j} \ldots i_{n+1}}.
\end{equation}

\subsubsection{Pl\"ucker relations}
Let $\Gr(n,n+2)$ denote the Grassmannian of
$n$-dimensional subspaces of an $(n+2)$-dimensional vector space over the base field~$\mathbb{K}$.  The Pl\"ucker embedding realizes
$\Gr(n,n+2)$ as a subvariety of
\(\mathbb{P}^{\binom{n+2}{n}-1}=\Proj\mathbb{K}[T_{\bfi}\mid |\bfi|=n]\).
The \emph{Pl\"ucker ideal}, denoted by $\mathscr{P}_{n,n+2}$, is the ideal of
algebraic relations among the $[{\bfi}]$. Equivalently, it encodes all the relations among the maximal minors of an $n \times (n+2)$ generic matrix. It is well known that the \emph{Pl\"ucker ideal} is a prime homogeneous
ideal generated by quadratic relations known as the
\emph{Pl\"ucker relations}.  For any subsets of indices $\mathsf{I},\mathsf{J} \subset [n+2]$ such that $|\mathsf{I}|=n-1$ and $|\mathsf{J}|=n+1$, the corresponding Pl\"ucker relation is the well known alternating sum
\begin{equation}\label{eq:Pluck}
    \sum_{\lambda=1}^{n+1} (-1)^{\lambda}[i_1,\ldots,i_{n-1}, j_{\lambda}][j_1,\ldots, \hat{j_{\lambda}},\ldots,j_{n+1}]=0,
\end{equation}
where $\ \widehat{}\ $ denotes omission. Note that if $\mathsf{I}\subseteq \mathsf{J}$, then the above relation is trivial, because each term of the sum either vanishes or is canceled by another term with opposite sign. Hence, the only nontrivial relations arise from subsets of the form $\mathsf{I}=\{i_1, \ldots, i_{n-2},a\}$ and $\mathsf{J}=\{i_1, \ldots, i_{n-2},b,c,d\}$ with $a<b<c<d$. The corresponding nontrivial Pl\"ucker relation on maximal minors is

\begin{equation}\label{eq:minors relation}
\begin{aligned}
    [i_1,\ldots,i_{n-2}, a,b][i_1,\ldots,i_{n-2}, c,d]-[i_1,\ldots,i_{n-2}, a,c][i_1,\ldots,i_{n-2}, b,d]+\\
    +[i_1,\ldots,i_{n-2}, a,d][i_1,\ldots,i_{n-2}, b,c]=0.
\end{aligned}
\end{equation}
Note that each relation~\eqref{eq:minors relation} contains exactly
three terms. 

\begin{Remark}\label{nonTrivialPL}
    The Pl\"ucker ideal \(\mathscr{P}_{n,n+2}\) is generated by all the quadrics of the form:
\begin{equation}\label{eq:general-plucker}
  \text{PLU}_\mathsf{I}=  T_{\mathsf{I}\cup\{a,b\}}T_{\mathsf{I}\cup\{c,d\}}
- T_{\mathsf{I}\cup\{a,c\}}T_{\mathsf{I}\cup\{b,d\}}
+ T_{\mathsf{I}\cup\{a,d\}}T_{\mathsf{I}\cup\{b,c\}},
\end{equation}
where $\mathsf{I}\subset [n+2]$ with $|\mathsf{I}|=n-2$ and $a,b,c,d$ are the remaining distinct indices, with $a<b<c<d$. 

Each term in a Pl\"ucker relation is a product of two variables $T_\bfi$ and $T_\jj$ such that index sets $\bfi$ and $\jj$ share exactly $n-2$ elements.  
\end{Remark}

\begin{Remark}\label{rmk:chainF}
        By \cite[Proposition 4.1]{CMRS} the special fiber of $I_n(\calH)$ is isomorphic to the special fiber of the ideal of maximal minors of a generic matrix of the same size, which is well known to be isomorphic to the coordinate ring of the Grassmannian. Consequently, the Pl\"ucker relations generate the defining ideal $\calK[0]$ of the special fiber of $I_n(\calH)$. When we degenerate $\calH$, higher-degree equations may appear. However, we still have the following chain of inclusions:
        
    \begin{equation}\label{containment}
        \mathscr{P}_{n,n+2}=\calK[0]\subseteq \calK[1]\subseteq\calK[2]\subseteq \ldots \subseteq \calK[n-1].
\end{equation}
   
    \end{Remark}

\section{Degeneration $r=n-1$} \label{sec:extremZero}

In this section, we study the extremal degeneration in which the Hankel matrix contains the maximal number of zero entries allowed by the pattern, that is, when $r = n-1$ and

\begin{equation*}
    H[n-1]=
	\begin{pmatrix}
	x_1&x_2&x_3&x_4&\ldots &x_{n+1}&x_{n+2} \\
	x_2&x_3& x_4& x_5&\ldots &x_{n+2}&0\\
	\vdots &\vdots & \vdots & \vdots  &\iddots &\vdots&\vdots \\
    x_{n-1}&x_{n}& x_{n+1}& x_{n+2} &\ldots  & 0&0 \\
	x_{n}&x_{n+1}& x_{n+2}& 0 &\ldots  & 0 &0\\
	\end{pmatrix}.
\end{equation*}
\vspace{3pt}

We begin by determining the defining equations of the special fiber $\mathcal{F}[n-1]$, that is, a set of generators for the ideal $\calK[n-1]$. 

By Remark \ref{rmk:chainF}, the Plücker ideal is contained in $\calK[n-1]$. In this case, however, the Pl\"ucker relations alone do not generate $\calK[n-1]$; additional relations appear. Nevertheless, these extra relations are still quadratic and admit a matrix structure. To describe them precisely, we introduce the following notation.
\medskip

In the matrix $H[n-1]$, let $C_i$ denote the $i$-th column. We can write
$$H[n-1]=\begin{pmatrix}C_1 & C_2 & \cdots & C_{n+1} & C_{n+2}
\end{pmatrix}.$$ 
Let $\mathbf{a}=\{a_1, a_2, \ldots, a_{n-2}\} \in \binom{[n+2]}{n-2}$, where $\binom{[N]}{c}$ denotes the set of all subsets of $[N]$ of cardinality $c$. We define
$$\calL_{\aa} \coloneqq\begin{pmatrix}
	\begin{matrix}
	C_1& C_2&\ldots &C_{n+1} & C_{n+2}\\
	C_2& C_3&\ldots &C_{n+2} & 0\\
	\end{matrix}
 & \rvline & \begin{matrix}
	0 & 0 &\ldots &0\\
	C_{a_1}& C_{a_2}&\ldots &C_{a_{n-2}} \\
	\end{matrix}
    \end{pmatrix}.$$
    
Thus, $\calL_\aa$ is a $2n \times 2n$ square matrix divided into two blocks of size $n\times 2n$ each. We denote the top block by $\calL^{{\rm T,\aa}}$, and the bottom block by $\calL^{{\rm B,\aa}}$. When $\aa$ is clear from the context, we simply write $\calL^{{\rm T}}$ and $\calL^{{\rm B}}$.

    \begin{Remark}\label{rmk:detLisZero}
    We observe that $\det(\calL_{\aa})=0.$ In fact, after performing the row operations
    $$-R_k+R_{n+k-1}\rightarrow R_{n+k-1} \ \  \text{for} \ \ k=2,\ldots, n$$
    the bottom block becomes 
$$\calL^{{\rm B}}=\begin{pmatrix}
\begin{matrix}
    0&0 & 0& \cdots  & 0\\
    \vdots&\vdots& \vdots&\iddots  & \vdots\\
    0&0&0 &\cdots & 0\\
    x_{n+1} &x_{n+2}&0 &\cdots  & 0
\end{matrix}
& \rvline & \begin{matrix}
	C_{a_1}& C_{a_2}&\ldots &C_{a_{n-2}} 
	\end{matrix}
    \end{pmatrix}.$$
    Consequently, all the $n$-minors of $\calL^{{\rm B}}$ vanish, and it follows that the full determinant is zero.
\end{Remark}

    We now use generalized Laplace expansion by complementary minors to obtain a relation on the $n$-minors.  
    The determinant of $\calL_\aa$ can be expanded along the last $n$ rows as
    \begin{equation}\label{eq:LapMinors}
        \mathrm{det}(\calL_\aa)=\sum_{\Lambda\in \binom{[2n]}{n}} (-1)^{\sum_{\lambda\in \Lambda} \lambda+\frac{n(3n+1)}{2}} L^{{\rm B}}_{\Lambda} L^{{\rm T}}_{\Lambda^c} , 
    \end{equation}
    where $L^{{\rm B}}_{\Lambda}$ denotes the $n$-minor in the bottom block determined by the columns indexed by $\Lambda$, and ${L^{{\rm T}}_{\Lambda^c}}$ its complementary $n$-minor from the top block determined by the columns indexed by $\Lambda^c =[n+2] \setminus \Lambda$.
    The sign is determined by the sum of all indices corresponding to the rows and columns used to obtain the minor $L^{{\rm B}}_{\Lambda}$. The rows are always $n+1, \ldots, 2n $ and the columns are determined by $\Lambda$. \par 
        By the structure of the matrix, any nonzero $n$-minor in the lower block necessarily involves the columns $C_{\aa}=\{C_{a_1}, \ldots, C_{a_{n-2}}\}$, together with two additional ones, say $C_i$ and $C_j$. Indeed, if an $n$-minor in the lower block does not contain all the columns $C_{\aa}$, then its complementary minor in the upper block vanishes, and therefore, it does not contribute a new term to the expansion. Thus, we always have $$L_{\Lambda}^{{\rm B}}=\det(C_i,C_j,C_{a_1}, \ldots, C_{a_{n-2}})\quad \text{and} \quad L_{\Lambda^c}^{{\rm T}}=\det(C_1  \ldots  \widehat{C_{i-1}}  \ldots  \widehat{C_{j-1}}  \ldots  C_{n+2}),$$ where $\ \widehat{}\ $ denotes omission. Moreover, the sign of each of these summands depends only on $i$ and $j$ and is given by $$(-1)^{\frac{n(3n+1)}{2}+i+j-2+\frac{3(n-2)(n+1)}{2}}=(-1)^{i+j+1}.$$

This motivates the following definition.

\begin{Definition} \label{def:Lap}
Let $\mathbf{a}=\{a_1, a_2, \ldots, a_{n-2}\} \in \binom{[n+2]}{n-2}$ be a subset of $[n+2]$ of cardinality $n-2$. Set $B:=\{2,\ldots, n+2\} \setminus \mathbf{a}$. We define the \emph{Laplace relation} associated to $\aa$ to be the polynomial 
       \begin{equation}\label{eq:lap}
     \Lap_{\mathbf{a}}:=\sum\limits_{\{i,j\} \subset B} \sgn (i,j) \ T_{\{i,j\}\cup \mathbf{a}} T_{[n+2] \setminus \{i-1,j-1\}} \ \in \ \mathbb{K}[T_\mathbf{s} \mid \mathbf{s}=\{s_1,\ldots, s_n\}\subset [n+2]],
        \end{equation}
where $\sgn (i,j)=(-1)^{i+j+1 +\eta_i+\eta_j} $ with $\eta_k:= | \{\,a_s\in\aa \mid a_s<k\,\}| $. Equivalently, $\eta_k$ is the number of entries of $\aa$ that lie strictly to the left of $k$.
        \end{Definition}

The correction term $\eta_i+\eta_j$ in the sign of the Laplace relation compensates for the sign change produced when the columns indexed by $i$ and $j$ are reordered among the indices in $\aa$. Since the variables $T_{\mathbf{i}}$ are defined up to a permutation of their index set, we must include this term to ensure that each product of minors is written with the correct sign.
\medskip

As a direct consequence, we have that Laplace relations lie in $\calK[n-1]$.

   \begin{Lemma}\label{lem:LAP-vanish}
    $\Lap_{\aa}\in \calK[n-1]$ for every $\aa  \in \binom{[n+2]}{n-2}$, that is, $\psi(\Lap_\mathbf{a})=0$.
\end{Lemma}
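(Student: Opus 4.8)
The plan is to show that applying $\psi$ to $\Lap_{\aa}$ recovers (up to a global sign) the generalized Laplace expansion~\eqref{eq:LapMinors} of $\det(\calL_\aa)$ along its last $n$ rows, and then to invoke Remark~\ref{rmk:detLisZero}, which tells us that this determinant is $0$. Concretely, recall that $\psi(T_{\mathbf s}) = [\mathbf s]$ is the maximal minor of $H[n-1]$ on the columns $\mathbf s$. So I would compute $\psi\bigl(\sum_{\{i,j\}\subset B}\sgn(i,j)\,T_{\{i,j\}\cup\aa}\,T_{[n+2]\setminus\{i-1,j-1\}}\bigr) = \sum_{\{i,j\}\subset B}\sgn(i,j)\,[\{i,j\}\cup\aa]\,[[n+2]\setminus\{i-1,j-1\}]$ and argue this equals $\pm\det(\calL_\aa) = 0$.

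The key steps, in order: First, recall from the discussion preceding Definition~\ref{def:Lap} that in the Laplace expansion~\eqref{eq:LapMinors} the only nonzero summands are those for which $L^{\rm B}_\Lambda$ contains all columns $C_{a_1},\dots,C_{a_{n-2}}$ plus two further columns $C_i, C_j$; for such a term $L^{\rm B}_\Lambda = \det(C_i,C_j,C_{a_1},\dots,C_{a_{n-2}})$, which is precisely $\pm[\{i,j\}\cup\aa]$ up to the reordering sign $(-1)^{\eta_i+\eta_j}$ coming from inserting $i,j$ into the sorted position among the $a_s$'s, and $L^{\rm T}_{\Lambda^c} = \det(C_1\cdots\widehat{C_{i-1}}\cdots\widehat{C_{j-1}}\cdots C_{n+2}) = [[n+2]\setminus\{i-1,j-1\}]$. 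Second, note that the constraint that both $i$ and $j$ lie in the column set and $i-1,j-1$ must be valid omitted indices in the top block forces $\{i,j\}\subseteq\{2,\dots,n+2\}\setminus\aa = B$, matching the index range in~\eqref{eq:lap}; terms with $i\in\aa$ or $j\in\aa$ contribute zero since then a repeated column appears. Third, match the signs: the sign attached to the $\{i,j\}$-term in~\eqref{eq:LapMinors} was computed in the paragraph before Definition~\ref{def:Lap} to be $(-1)^{i+j+1}$ for the choice of columns $\{C_i,C_j,C_{a_1},\dots,C_{a_{n-2}}\}$ taken in that order; combining with the reordering sign $(-1)^{\eta_i+\eta_j}$ needed to express the minor as $[\{i,j\}\cup\aa]$ (where the index set is sorted, per the convention $T_{\{1,3,4\}}=T_{\{3,4,1\}}$) yields exactly $\sgn(i,j)=(-1)^{i+j+1+\eta_i+\eta_j}$ as in Definition~\ref{def:Lap}. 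Hence $\psi(\Lap_\aa) = \pm\det(\calL_\aa)$.

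Finally, conclude by Remark~\ref{rmk:detLisZero} that $\det(\calL_\aa)=0$, so $\psi(\Lap_\aa)=0$, i.e. $\Lap_\aa\in\ker\psi = \calK[n-1]$, for every $\aa\in\binom{[n+2]}{n-2}$.

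The main obstacle is bookkeeping the signs correctly, in particular verifying that the $(-1)^{\eta_i+\eta_j}$ correction in Definition~\ref{def:Lap} is exactly what is needed to pass between the ordered minor $\det(C_i,C_j,C_{a_1},\dots,C_{a_{n-2}})$ appearing in the genuine Laplace expansion and the symbol $T_{\{i,j\}\cup\aa}$, whose subscript is an unordered (equivalently, sorted) set; a clean way to handle this is to fix once and for all that $T_{\mathbf s}$ stands for the minor with columns in increasing order, then track the permutation sorting $(i,j,a_1,\dots,a_{n-2})$ and observe its sign is $(-1)^{\eta_i+\eta_j}$ (moving $i$ past the $\eta_i$ smaller $a$'s and $j$ past the $\eta_j$ smaller $a$'s, with the relative order of $i$ and $j$ already correct since $i<j$). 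Everything else is a direct substitution into the already-established expansion~\eqref{eq:LapMinors}.
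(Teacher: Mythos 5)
Your proposal is correct and follows essentially the same route as the paper: the paper's proof simply observes that, by the construction carried out in the discussion preceding Definition~\ref{def:Lap}, $\psi(\Lap_{\aa})$ \emph{is} the generalized Laplace expansion of $\det(\calL_{\aa})$, and then invokes Remark~\ref{rmk:detLisZero}. The sign bookkeeping you spell out (including the $(-1)^{\eta_i+\eta_j}$ reordering correction and the restriction of $\{i,j\}$ to $B$) is exactly the computation the paper performs before stating Definition~\ref{def:Lap}, so nothing is missing.
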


\begin{proof}
    By Definition \ref{def:Lap}, we have $\psi(\Lap_\mathbf{a})=\det \calL_\aa$.
 The claim then follows from Remark \ref{rmk:detLisZero}.
\end{proof}
\begin{Example}\label{ex:lap}
      Let $n=4$ and fix $\aa=\{5,6\} \in \binom{[6]}{2}$. The $8 \times 8 $ matrix corresponding to this choice of $\aa$ is 

    $$\calL_\aa= \begin{pmatrix}
	\begin{matrix}
	C_1& C_2&C_3&C_4 &C_{5} & C_{6}\\
	C_2& C_3& C_4 &C_{5}&C_{6} & 0\\
	\end{matrix}
 & \rvline & \begin{matrix}
	0 & 0\\
	C_{5}& C_{6} \\
	\end{matrix} \\
\end{pmatrix}.$$
The generalized Laplace expansion along the last $4$ rows yields:

$$-[2456]^2+[2356][3456]+[1456][3456],$$

and the corresponding Laplace relation is given by

$$\Lap_{5,6}=-T_{2456}^2+T_{2356}T_{3456}+T_{1456}T_{3456}.$$

 \end{Example}
 
We now prove that the Laplace relations, together with the Pl\"ucker relations, generate  $\calK[n-1]$. In doing so, we will show that  $\calK[n-1]$ is isomorphic to the special fiber of the ideal of $2$-minors of  

$$ \LL= \begin{pmatrix}
    x_1 & x_2 &  x_3 & \cdots & x_{n+2}\\
    x_2 & x_3 &  \cdots & x_{n+2}& 0
\end{pmatrix}.$$

The latter special fiber arises as a particular case of the special fibers of $2$-determinantal ideals studied in greater generality in \cite{RS}. To this aim, inspired by the classical duality property of the Grassmannian $Gr(n,n+2) \isom Gr(2,n+2)$, we construct a $\KK$-algebra isomorphism 
\begin{align} \label{eq:Kisom}
        \varphi: \mathbb{K}[T_\bfi \mid \bfi=\{i_1,\ldots, i_n\}\subset [n+2]]&\longrightarrow \KK[T_{k_1,k_2} \mid \{k_1,k_2\}\subset [n+2]]
    \end{align}

that preserves the poset structure of the underlying minors. Recall that the set $\Pi_{n,n+2}$ of maximal minors of a generic $n\times (n+2)$ matrix is partially ordered by the relation:
$$[i_1\ldots i_n] \preceq [j_1 \ldots j_n] \Leftrightarrow i_k \leq j_k \ \text{ for all} \ k=1, \ldots, n.$$
The construction of  $\varphi$ is as follows. 
Given a subset of indices  $\kk \in [N]$, denote its complement by $\kk^c:= [N]\setminus \kk$, and write $c(\kk)=\kk^c$ for the complement map.
 
Let $\sigma$ be the longest element in the symmetric group $\mathfrak{S}_N$ with respect to the generating set consisting of the adjacent transpositions $(i\;i+1)$; in one-line notation $\sigma=(N\ N-1 \ldots1)$.   
We define 
$$\varphi(T_\bfi):= T_{\sigma(\bfi^c)},$$ where $N=n+2$. It is easy to verify that $\sigma\circ c=c\circ \sigma$, i.e., $\sigma(\kk^c)=\sigma(\kk)^c$. Moreover, both $c$ and $\sigma$ are involutions, meaning that $\sigma\circ \sigma =\id $ and $c \circ c=\id $. Thus, $(\sigma \circ c)^{-1}=c \circ \sigma $ and we have $\varphi^{-1} (T_{\{i,j\}})= T_{\sigma(\{i,j\}^c)} $.\par

\begin{Remark}\label{rmk:isoposet}
The composition $\sigma  \circ  c$ naturally induces an isomorphism on the corresponding posets of maximal minors $$\sigma  \circ  c: \Pi_{n,n+2}\longrightarrow \Pi_{2,n+2}.$$ 
Indeed, both $c$ and $\sigma$ are bijective and order-reversing, so their composition is bijective and order-preserving. In  other words,  $\varphi$ identifies the variables corresponding to the same element in the respective posets.

\end{Remark}

We illustrate this isomorphism with an example.
\begin{Example}\label{ex:diagram}
Consider the two posets of maximal minors $\Pi_{4,6}$ and $\Pi_{2,6}$ in Figure ~\ref{fig:poset}.

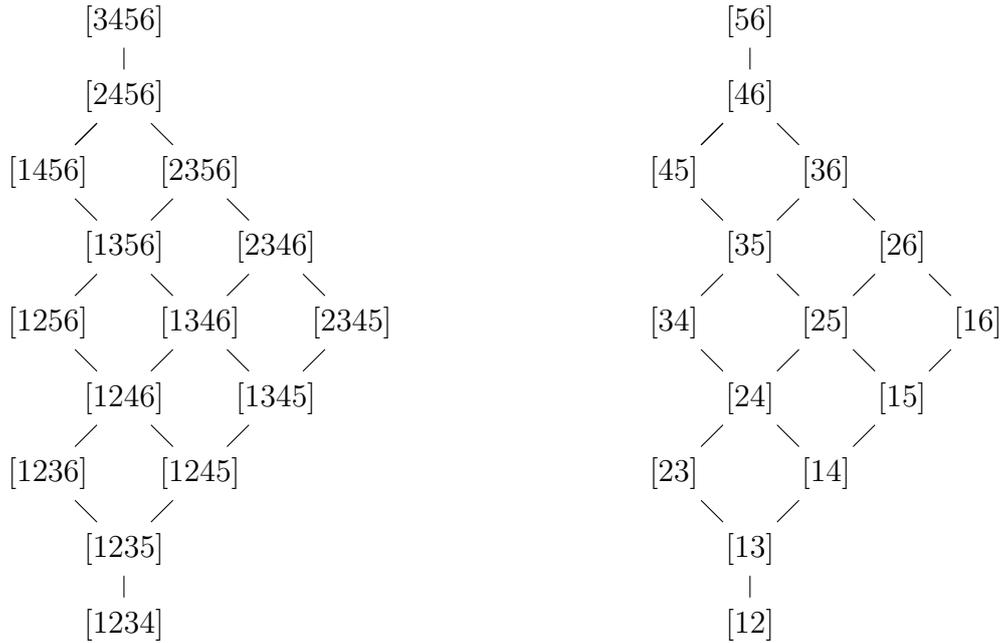
\begin{figure}[h]
 \centering
    \begin{subfigure}{0.45\textwidth}
     \centering
        \begin{tikzpicture}[scale=1.0,auto=center]
\node (n1) at (2,1) {[1234]};
\node (n2) at (2,2)  {[1235]};
\node (n3) at (1,3)  {[1236]};
\node (n4) at (3,3) {[1245]};
\node (n5) at (2,4)  {[1246]};
\node (n6) at (4,4)  {[1345]};
\node (n7) at (1,5)  {[1256]};
\node (n8) at (3,5)  {[1346]};
\node (n9) at (5,5)  {[2345]};
\node (n10) at (2,6)  {[1356]};
\node (n11) at (4,6)  {[2346]};
\node (n12) at (1,7)  {[1456]};
\node (n13) at (3,7)  {[2356]};
\node (n14) at (2,8)   {[2456]};
\node (n15) at (2,9)   {[3456]};
\foreach \from/\to in {n1/n2,n2/n3,n2/n4,n3/n5,n4/n5,n4/n6,n5/n7,n5/n8,n6/n8,n6/n9,n7/n10,n8/n10,n8/n11,n9/n11,n10/n12,n10/n13,n11/n13,n12/n14,n12/n14,n13/n14,n14/n15}
\draw (\from) -- (\to);
\end{tikzpicture}
    \end{subfigure}
    \begin{subfigure}{0.45\textwidth}
     \centering
\begin{tikzpicture}[scale=1.0,auto=center]
\node (n1) at (2,1) {[12]};
\node (n2) at (2,2)  {[13]};
\node (n3) at (1,3)  {[23]};
\node (n4) at (3,3) {[14]};
\node (n5) at (2,4)  {[24]};
\node (n6) at (4,4)  {[15]};
\node (n7) at (1,5)  {[34]};
\node (n8) at (3,5)  {[25]};
\node (n9) at (5,5)  {[16]};
\node (n10) at (2,6)  {[35]};
\node (n11) at (4,6)  {[26]};
\node (n12) at (1,7)  {[45]};
\node (n13) at (3,7)  {[36]};
\node (n14) at (2,8)   {[46]};
\node (n15) at (2,9)   {[56]};
\foreach \from/\to in {n1/n2,n2/n3,n2/n4,n3/n5,n4/n5,n4/n6,n5/n7,n5/n8,n6/n8,n6/n9,n7/n10,n8/n10,n8/n11,n9/n11,n10/n12,n10/n13,n11/n13,n12/n14,n12/n14,n13/n14,n14/n15}
\draw (\from) -- (\to);
\end{tikzpicture}
    \end{subfigure}
\caption{Isomorphic posets of maximal minors $\Pi_{4,6}$ and $\Pi_{2,6}$}
\label{fig:poset}
\end{figure}
The isomorphism $\varphi$ identifies variables representing minors that correspond to the same element in the respective posets. For example, $\varphi(T_{1346})=T_{25}$ and $\varphi(T_{1245})=T_{14}$.
\end{Example}

In Lemma \ref{lem:isom} we show that under $\varphi$ the Laplace relations \eqref{eq:lap} and the Plucker relations of $\Gr(n, n+2)$ are mapped bijectively onto the Laplace relations in \cite{RS} and the Plucker relations of $\Gr(2, n+2)$, respectively; these images together generate the defining ideal of $\calF(I_2(\LL))$. \par 
The following observation on the structure of the Laplacian relations \eqref{eq:lap} will be useful in proving this result.

\begin{Remark} \label{rmk:cardB}
    There are exactly $\binom{n+2}{n-2}=\binom{n+2}{4}$ Laplace relations. The number of summands in each relation depends on whether  $1 \in \mathbf{a}$ or not. More precisely: \par 
    \begin{itemize}
        \item If $1 \in \mathbf{a}$, then $|B|=4$. Hence, $\text{LAP}_{\mathbf{a}}$ has $\binom{4}{2}=6$ summands. There are $\binom{n+1}{n-3}$ such equations.
        \item If $1 \notin \mathbf{a}$, then $|B|=3$. Hence, $\text{LAP}_{\mathbf{a}}$ has $\binom{3}{2}=3$ summands. There are $\binom{n+1}{n-2}$ such equations.
    \end{itemize}
    
\end{Remark}

\begin{Lemma}\label{lem:isom}
    The ideal generated by the Pl\"ucker \eqref{eq:general-plucker} and Laplace relations \eqref{eq:lap} is isomorphic via $\varphi$ to the defining ideal of $\calF(I_2(\LL))$.
\end{Lemma}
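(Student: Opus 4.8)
The plan is to use that $\varphi$ is an isomorphism of polynomial rings: it carries the ideal generated by the relations \eqref{eq:general-plucker} and \eqref{eq:lap} onto the ideal generated by their $\varphi$-images, so it suffices to show that these images are exactly the generators of the defining ideal of $\calF(I_2(\LL))$ supplied by the structure theorem of \cite{RS}. By that theorem, applied to $\LL$ (which in the notation of \cite{RS} is the matrix attached to the class $\mathscr{H}_{n+1,0}$), the defining ideal of $\calF(I_2(\LL))$ is generated by the Pl\"ucker relations of $\Gr(2,n+2)$ together with an explicit family of Laplace-type relations, each an alternating sum of products of pairs of $2$-minors with either three or six terms. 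I would therefore split the argument into a ``Pl\"ucker part'' and a ``Laplace part''.

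For the Pl\"ucker part: since $\sigma$ is the order reversal $\sigma(k)=n+3-k$, take a generating Pl\"ucker quadric $\mathrm{PLU}_{\mathsf I}$ with $[n+2]=\mathsf I\sqcup\{a<b<c<d\}$. The complements of its six index sets are $\{c,d\},\{a,b\},\{b,d\},\{a,c\},\{b,c\},\{a,d\}$; applying $\sigma$ and writing $a'=\sigma(d)<b'=\sigma(c)<c'=\sigma(b)<d'=\sigma(a)$, a direct computation gives
$$\varphi(\mathrm{PLU}_{\mathsf I})=T_{\{a',b'\}}T_{\{c',d'\}}-T_{\{a',c'\}}T_{\{b',d'\}}+T_{\{a',d'\}}T_{\{b',c'\}},$$
i.e. precisely the Pl\"ucker quadric of $\Gr(2,n+2)$ on $\{a',b',c',d'\}$, with matching signs. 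As $\mathsf I\mapsto\sigma(\mathsf I^{c})=\{a',b',c',d'\}$ is a bijection from $\binom{[n+2]}{n-2}$ to $\binom{[n+2]}{4}$, this shows simultaneously that $\varphi(\mathscr P_{n,n+2})=\mathscr P_{2,n+2}$ and that the Pl\"ucker generators are matched bijectively.

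For the Laplace part --- the combinatorial core --- I would fix $\aa\in\binom{[n+2]}{n-2}$, set $B=\{2,\dots,n+2\}\setminus\aa$, and compute term by term, using $\varphi\bigl(T_{[n+2]\setminus\{i-1,j-1\}}\bigr)=T_{\sigma(\{i-1,j-1\})}=T_{\{n+4-i,\,n+4-j\}}$,
$$\varphi(\Lap_{\aa})=\sum_{\{i,j\}\subset B}\sgn(i,j)\,T_{\sigma((\{i,j\}\cup\aa)^{c})}\,T_{\{n+4-i,\,n+4-j\}},$$
noting that $(\{i,j\}\cup\aa)^{c}$ is the $2$-subset $B\setminus\{i,j\}$ when $1\in\aa$ (so $|B|=4$ and six terms occur) and $\{1\}\cup(B\setminus\{i,j\})$ when $1\notin\aa$ (so $|B|=3$ and three terms occur), matching the two cases of Remark \ref{rmk:cardB}. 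I would then identify this, after the reindexing $k\mapsto n+4-k$ induced on $B$ by $\sigma$, with the Laplace relation of \cite{RS} attached to a dual datum $f(\aa)$, and verify that the sign $\sgn(i,j)=(-1)^{i+j+1+\eta_i+\eta_j}$ agrees with the one in \cite{RS} after passing through $\sigma$: the correction $\eta_i+\eta_j$ accounts for the reordering of $i,j$ among the entries of $\aa$, the shift $i\mapsto i-1$ reflects the Hankel/Jordan shift built into $\LL$, and $\sigma$ contributes the orientation reversal, so their combined effect reproduces the \cite{RS} convention (at worst up to a harmless overall sign on each relation). Since $\aa\mapsto f(\aa)$ is a bijection --- both families number $\binom{n+2}{4}$, with the same split $\binom{n+1}{4}+\binom{n+1}{3}$ according to whether $1\in\aa$ --- the Laplace generators correspond bijectively, and combining the two parts gives the claim.

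I expect the sign bookkeeping in the Laplace part to be the main obstacle: one must track the correction term $\eta_i+\eta_j$, the index shift by one, and the orientation reversal of $\sigma$ simultaneously and confirm that their product is exactly the sign convention of \cite{RS}. This is routine but error-prone, and is cleanest to organize by treating the cases $1\in\aa$ and $1\notin\aa$ (equivalently, the six- and three-term shapes) separately and reducing each to a single model relation.
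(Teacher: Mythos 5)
Your proposal is correct and follows essentially the same route as the paper: apply $\varphi$ termwise, use that complementation plus the order-reversal $\sigma$ sends each Pl\"ucker quadric of $\Gr(n,n+2)$ to the corresponding one of $\Gr(2,n+2)$, and match each $\Lap_{\aa}$ with a Laplace relation of \cite{RS} by splitting into the cases $1\in\aa$ and $1\notin\aa$, checking signs via the correction term $\eta_i+\eta_j$. The paper carries out the sign verification explicitly only for $1\notin\aa$ (computing $\eta_\alpha=\alpha-2$, $\eta_\beta=\beta-3$, $\eta_\gamma=\gamma-4$) and treats the Pl\"ucker bijection as immediate, whereas you are more explicit on the Pl\"ucker side and defer the same sign bookkeeping; neither difference is substantive.
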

\begin{proof}

To prove the lemma, we show that for each Laplace relation $\Lap_\aa$, its image $\varphi(\Lap_\aa)$ coincides with exactly one of the Laplace relations described in \cite{RS}. We consider two cases.\par 
First assume that $1 \notin \mathbf{a}$. By Remark \ref{rmk:cardB} we have $B=\{\alpha, \beta, \gamma\}$ with
$1<\alpha< \beta<\gamma$, and 
\[ \text{LAP}_{\aa}= \sgn(\alpha,\beta)T_{\aa\cup \{\alpha,\beta\}}T_{\{\alpha-1, \beta-1\}^c}+\sgn(\alpha,\gamma)T_{\aa\cup \{\alpha,\gamma\}}T_{\{\alpha-1, \gamma-1\}^c}+\sgn(\beta,\gamma)T_{\aa\cup \{\beta,\gamma\}}T_{\{ \beta-1,\gamma-1\}^c}. \]

Notice that $(\aa\cup \{\alpha,\beta\})^c=\{1,\gamma\}$, and similarly for the other terms. Applying $\varphi$ yields: 

\begin{align*}
    \varphi(\text{LAP}_{\aa})=& \sgn(\alpha,\beta)  T_{\sigma(\{1, \gamma\})}\ T_{\sigma(\{\alpha-1,\beta-1\})}+\sgn(\alpha,\gamma) T_{\sigma(\{1,\beta\})}\ T_{\sigma(\{\alpha-1, \gamma-1\})}+\\
    &+\sgn(\beta,\gamma) T_{\sigma(\{1,\alpha\})}T_{ \sigma(\{\beta-1,\gamma-1\})} \\
    =&\sgn(\alpha,\beta) T_{n+3-\gamma, n+2}\ T_{n+4-\beta, n+4-\alpha}+\sgn(\alpha,\gamma) T_{n+3-\beta, \ n+2}\ T_{n+4-\gamma, n+4-\alpha}+\\
    \quad & +\sgn(\beta,\gamma) T_{n+3-\alpha, n+2}T_{ n+4-\beta,n+4-\gamma}. 
\end{align*}
These relations coincide with the relations (5.6) in \cite{RS} obtained from  $$\det \begin{pmatrix}
    C_{n+3-\gamma } & C_{n+3-\beta} & C_{n+3-\alpha}  & C_{n+2}\\
C_{n+4-\gamma } & C_{n+4-\beta}  & C_{n+4-\alpha} & 0
\end{pmatrix}=0.$$
In fact, since $\aa \cup \{\alpha, \beta, \gamma\}=\{2,\ldots, n+2\}$, it can be easily checked that
$$\eta_\alpha=\alpha-2, \ \  \eta_{\beta}=\beta-3, \ \  \text{and} \ \ \eta_{\gamma}=\gamma-4.$$
Therefore,
$$ \sgn(\alpha,\beta)=\sgn(\beta,\gamma)=- \sgn(\alpha,\gamma)=1.$$
Assume now that $1 \in \aa$. A similar computation shows that the relations $\Lap_\aa$ correspond to the relations (5.5) in \cite{RS}. 
The argument is analogous to the previous case and is omitted for brevity. \par
It is straightforward to check that $\varphi$ induces a bijection between the Pl\"ucker relations of $\Gr(n,n+2)$ and those of $\Gr(2,n+2)$. This completes the proof.
\end{proof}

Using Lemma \ref{lem:LAP-vanish} and Lemma \ref{lem:isom}, we get the defining equations of the special fiber $\calF[n-1]$.
\begin{Theorem}\label{thm:eqSF}
    The Pl\"ucker relations \eqref{eq:general-plucker} and the Laplace relations \eqref{eq:lap} generate the ideal $\calK[n-1]$ of the special fiber ring $\calF[n-1]$. In particular, $\calF[n-1]\isom \calF(I_2(\LL))$.
\end{Theorem}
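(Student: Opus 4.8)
The plan is to prove the sharper statement that the ideal $\mathfrak{a}\subseteq\calP_{\calF}$ generated by the Pl\"ucker relations \eqref{eq:general-plucker} together with the Laplace relations \eqref{eq:lap} is \emph{equal} to $\calK[n-1]$; the isomorphism $\calF[n-1]\isom\calF(I_2(\LL))$ is then immediate. First I would record the easy inclusion $\mathfrak{a}\subseteq\calK[n-1]$: the Pl\"ucker relations lie in $\calK[0]\subseteq\calK[n-1]$ by the chain \eqref{containment} of Remark~\ref{rmk:chainF}, and the Laplace relations lie in $\calK[n-1]$ by Lemma~\ref{lem:LAP-vanish}. This already gives a surjection of $\KK$-algebras $\calP_{\calF}/\mathfrak{a}\twoheadrightarrow\calP_{\calF}/\calK[n-1]=\calF[n-1]$, so it remains only to see that it is injective.

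Next I would invoke Lemma~\ref{lem:isom}: the $\KK$-algebra isomorphism $\varphi$ carries $\mathfrak{a}$ onto the defining ideal of $\calF(I_2(\LL))$, whence $\calP_{\calF}/\mathfrak{a}\isom\calF(I_2(\LL))$. In particular $\calP_{\calF}/\mathfrak{a}$ is a domain (it is, up to isomorphism, a subalgebra of a polynomial ring), so $\mathfrak{a}$ is prime; likewise $\calK[n-1]$ is prime since $\calF[n-1]$ is a domain. The surjection above is therefore a surjection of finitely generated domains over $\KK$,
$$\calF(I_2(\LL))\twoheadrightarrow\calF[n-1],$$
and for such a map equality of Krull dimensions forces the kernel to have height $0$, hence to vanish. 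So it suffices to check that the two sides have the same dimension. By Remark~\ref{rmk:maxanspread}, $\dim\calF[n-1]=\ell(I_n(H[n-1]))=2n-(n-1)+1=n+2$. On the other side, $\dim\calF(I_2(\LL))=\ell(I_2(\LL))\le\dim\KK[x_1,\dots,x_{n+2}]=n+2$ by Proposition~\ref{Prop1.7}, while the displayed surjection already forces $\dim\calF(I_2(\LL))\ge\dim\calF[n-1]=n+2$. Hence both dimensions equal $n+2$, the surjection is an isomorphism, and pulling back through $\varphi$ this says exactly $\mathfrak{a}=\calK[n-1]$. Consequently $\calF[n-1]=\calP_{\calF}/\calK[n-1]=\calP_{\calF}/\mathfrak{a}\isom\calF(I_2(\LL))$, as claimed.

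The genuinely substantive content of the theorem is already isolated in Lemmas~\ref{lem:LAP-vanish} and~\ref{lem:isom}, namely the vanishing of the Laplace relations and the poset-preserving identification $\varphi$ matching the Pl\"ucker and Laplace relations with the Pl\"ucker and $2$-determinantal relations of \cite{RS}; granting those, the argument here is a soft primality-plus-dimension count with no further calculation. The one point worth flagging is that we deliberately avoid any independent computation of $\dim\calF(I_2(\LL))$: the lower bound $n+2$ comes for free from the surjection onto $\calF[n-1]$, and the matching upper bound is merely the trivial bound by the dimension of the ambient polynomial ring of $\LL$, so the two special fibers are squeezed into the same dimension and the surjection of domains is forced to be an isomorphism. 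The only mild obstacle, then, is bookkeeping: being careful that $\varphi$ is genuinely an isomorphism of the two coordinate polynomial rings (which is why $\binom{n+2}{n}=\binom{n+2}{2}$ matters) and that both $\mathfrak a$ and $\calK[n-1]$ are honestly prime before comparing coheights.
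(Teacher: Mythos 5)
Your argument is correct and follows the same skeleton as the paper's proof: establish the containment of the candidate ideal $\mathfrak{a}$ in $\calK[n-1]$ via Remark~\ref{rmk:chainF} and Lemma~\ref{lem:LAP-vanish}, use Lemma~\ref{lem:isom} to conclude that $\mathfrak{a}$ is prime, and then force equality of the two nested primes by a dimension count resting on the maximal analytic spread of $I_n(H[n-1])$ from Remark~\ref{rmk:maxanspread}. The one place you genuinely diverge is in pinning down $\dim(\calP_\calF/\mathfrak{a})$: the paper imports the codimension $\binom{n+2}{2}-(n+2)$ of the defining ideal of $\calF(I_2(\LL))$ from \cite[Proposition A.1]{RS} and compares heights, whereas you squeeze $\dim\calF(I_2(\LL))$ between the trivial upper bound $\ell(I_2(\LL))\le\dim\KK[x_1,\dots,x_{n+2}]=n+2$ and the lower bound forced by the surjection onto $\calF[n-1]$. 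This makes the equality $\mathfrak{a}=\calK[n-1]$ self-contained modulo Lemma~\ref{lem:isom} and slightly more elementary, at no cost in rigor; note, though, that the paper still needs \cite[Proposition A.1]{RS} immediately afterwards for the Cohen--Macaulayness and multiplicity statements of Corollary~\ref{mult}, so the dependence on that result is deferred rather than eliminated.
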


\begin{proof}
Let $\calK'$ be the ideal generated by the Pl\"ucker and Laplace relations. By Remark \ref{rmk:chainF} and Lemma $\ref{lem:LAP-vanish}$, $\calK' \subseteq \calK[n-1]$. Moreover, $\calK'$ is isomorphic to the defining ideal of the special fiber ring $\calF(I_2(\LL))$ by Lemma \ref{lem:isom}. In particular, $\calK'$ is prime and by \cite[Proposition A.1] {RS} it has codimension
$$\Ht(\calK')={n+2 \choose 2}-(n+2).$$ By Remark \ref{rmk:maxanspread}, $\calF[n-1]$ has maximal analytic spread so $\Ht (\calK[n-1])=\Ht (\calK')$. Hence, we obtain that $\calK'=\calK[n-1].$ Notice that both algebras are defined over the same polynomial ring and their defining equations are isomorphic via $\varphi$, so $$ \calF[n-1]\isom \calF(I_2(\LL)).\eqno\qedhere $$ 
\end{proof}

We illustrate the bijective correspondence between the Laplace relations \eqref{eq:lap} and the ones in \cite{RS} with an example.

\begin{Example}
Let $n=4$ and let $\Lap_{5,6}$ be the Laplace relation in Example \ref{ex:lap}, that is
$$\Lap_{5,6}=-T_{2456}^2+T_{2356}T_{3456}+T_{1456}T_{3456}.$$

Applying $\varphi$ (see Figure \ref{fig:poset}), we get

$$\varphi(\Lap_{5,6})=-T_{46}^2+T_{36}T_{56}+T_{45}T_{56}.$$

This corresponds to the Laplace equation in \cite{RS} (in their notation, $\Lap_{3,4,5,6}$) obtained from the vanishing of the following minor

$$\det \begin{pmatrix}
    C_{3 } & C_{4} & C_{5}  & C_{6}\\
C_{4} & C_{5}  & C_{6} & 0
\end{pmatrix}.$$

\end{Example}

As a direct consequence of Theorem \ref{thm:eqSF} and \cite[Proposition A.1]{RS}, we get the following:
\begin{Corollary}\label{mult}
     $\calF[n-1]$ is Cohen-Macaulay and it has multiplicity $2^{n+1}-n-2$.
\end{Corollary}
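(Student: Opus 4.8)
The plan is to leverage Theorem~\ref{thm:eqSF}, which establishes the $\KK$-algebra isomorphism $\calF[n-1]\isom \calF(I_2(\LL))$, and then simply transport all the relevant invariants across this isomorphism. Since $\calF(I_2(\LL))$ is one of the special fibers of $2$-determinantal ideals treated by Ramkumar and Sammartano, both the Cohen--Macaulayness and the multiplicity should follow directly from \cite[Proposition A.1]{RS}, which presumably computes exactly these data (depth, dimension, Hilbert series, multiplicity) for the coordinate ring of the variety cut out by the Pl\"ucker and Laplace relations. The only real work is to identify the specific numerical parameters of $\LL$ with those appearing in the statement of \cite[Proposition A.1]{RS} so that the multiplicity formula there specializes to $2^{n+1}-n-2$.

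First I would recall that $\LL$ is the $2\times(n+2)$ matrix with a single zero in the bottom-right corner, so in the notation of \cite{RS} it corresponds to a $2$-determinantal ideal in $\mathscr{H}_{n+1,0}$, namely one whose Kronecker--Weierstrass normal form consists of a single Jordan block of size $n+1$ with eigenvalue $0$ and no scroll blocks; this matches the identification already made in the introduction. Then I would invoke \cite[Proposition A.1]{RS} directly: it asserts that the special fiber $\calF(I_2(\LL))$ is Cohen--Macaulay of dimension $n+2$ (consistent with $\Ht(\calK')=\binom{n+2}{2}-(n+2)$, as used in the proof of Theorem~\ref{thm:eqSF}, together with $\dim\calP_{\calF}=\binom{n+2}{2}$) and that its multiplicity equals the value stated. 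Finally, since $\varphi$ is a graded $\KK$-algebra isomorphism carrying the defining ideal of $\calF[n-1]$ onto that of $\calF(I_2(\LL))$ (after suitably matching the gradings, which is immediate because $\varphi$ sends variables to variables), both properties transfer verbatim to $\calF[n-1]$.

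The main obstacle, such as it is, is purely bookkeeping: confirming that the degree function on $\calP_{\calF}$ used for $\calF[n-1]$ (where each $T_{\bfi}$ has degree $1$) corresponds under $\varphi$ to the degree function on $\KK[T_{k_1,k_2}]$ used by \cite{RS}, and that the multiplicity formula of \cite[Proposition A.1]{RS}, when evaluated at the parameters of $\LL$, indeed yields $2^{n+1}-n-2$. One can cross-check this against the small case $n=2$, where $\LL$ is $2\times 4$, $\calF(I_2(\LL))$ should have multiplicity $2^{3}-4=4$, which agrees with the known multiplicity of the corresponding codimension-$2$ special fiber; and against $n=4$ of Example~\ref{ex:lap}, where the formula gives $2^{5}-6=26$. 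Having pinned down the correspondence, the proof is a one-line appeal to Theorem~\ref{thm:eqSF} and \cite[Proposition A.1]{RS}, so I do not anticipate any genuine difficulty beyond this verification.
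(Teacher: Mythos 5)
Your proposal is correct and follows exactly the paper's route: the paper states this corollary as a direct consequence of Theorem~\ref{thm:eqSF} (the isomorphism $\calF[n-1]\isom \calF(I_2(\LL))$) and \cite[Proposition A.1]{RS}, with no further argument. Your identification of $\LL$ with the single-Jordan-block case in the notation of \cite{RS} and your numerical cross-checks are consistent with what the paper uses elsewhere (e.g.\ in Corollary~\ref{cor:alginvariant r=n-1}).
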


Moreover, we can define a term order $\preceq$ on $\mathbb{K}[T_\mathbf{s} \mid \mathbf{s}=\{s_1,\ldots, s_n\}\subset [n+2]]$ such that $T_{\mathbf{s}}\preceq T_{\mathbf{s'}}$ if $\varphi(T_{\mathbf{s}})\preceq_{RS} \varphi(T_{\mathbf{s'}})$ where $\preceq_{RS}$ denotes the term order defined in \cite{RS}. By \cite[Theorem 6.1]{RS}, it follows that $\calF[n-1]$ is Koszul.

\begin{Corollary}\label{cor:SFKoszul}
    The Pl\"ucker and Laplace relations form a Gr\"obner basis of $\calK[n-1]$ with respect to $\preceq$. In particular, the special fiber $\calF[n-1]$ is Koszul.
\end{Corollary}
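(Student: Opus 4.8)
The plan is to transport the Gröbner basis statement from \cite{RS} through the isomorphism $\varphi$. Since Theorem~\ref{thm:eqSF} already identifies $\calK[n-1]$ with the defining ideal of $\calF(I_2(\LL))$ via the $\KK$-algebra isomorphism $\varphi$, and since $\varphi$ sends the Plücker and Laplace generators of $\calK[n-1]$ bijectively onto the Plücker and Laplace generators of the defining ideal of $\calF(I_2(\LL))$ (Lemma~\ref{lem:isom}), it suffices to pull back the term order. Concretely, $\varphi$ is a monomial map on the variables $T_\bfs$, so it induces a monomial bijection on the full polynomial ring, and we may define $\preceq$ by declaring $m \preceq m'$ for monomials $m,m'$ precisely when $\varphi(m) \preceq_{RS} \varphi(m')$, where $\preceq_{RS}$ is the diagonal term order used in \cite[Theorem~6.1]{RS}. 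One checks this is a genuine term order: it is a total order on monomials, compatible with multiplication (because $\varphi$ is multiplicative: $\varphi(mm')=\varphi(m)\varphi(m')$), and has the monomial $1$ as its least element (since $\varphi(1)=1$).

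The key step is then purely formal: for any polynomial $f$, the leading term satisfies $\LM_{\preceq}(f) = \varphi^{-1}\big(\LM_{\preceq_{RS}}(\varphi(f))\big)$, because $\varphi$ preserves the monomial order by construction and preserves monomial supports. Consequently, for the set $G$ of Plücker and Laplace relations generating $\calK[n-1]$, the image $\varphi(G)$ is exactly the Plücker and Laplace generating set of the defining ideal of $\calF(I_2(\LL))$, which by \cite[Theorem~6.1]{RS} is a Gröbner basis with respect to $\preceq_{RS}$. Applying $\varphi^{-1}$ to the equality $\ini_{\preceq_{RS}}\big(\varphi(\calK[n-1])\big) = \big(\LM_{\preceq_{RS}}(g) : g \in \varphi(G)\big)$ and using that $\varphi$ is a ring isomorphism commuting with taking initial ideals, we conclude $\ini_{\preceq}(\calK[n-1]) = (\LM_{\preceq}(g) : g \in G)$, i.e.\ $G$ is a Gröbner basis of $\calK[n-1]$.

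For Koszulness, the standard argument applies: by \cite[Theorem~6.1]{RS} the initial ideal $\ini_{\preceq_{RS}}$ of the defining ideal of $\calF(I_2(\LL))$ is generated by squarefree quadratic monomials, and $\varphi$, being a monomial substitution permuting (and complementing) the index sets, sends squarefree quadratic monomials to squarefree quadratic monomials. Hence $\ini_{\preceq}(\calK[n-1])$ is also a quadratic squarefree monomial ideal, so in particular generated by quadrics; a graded algebra with a quadratic Gröbner basis is Koszul (see e.g.\ \cite{BCV} or standard references), which gives the conclusion.

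The only genuine obstacle is the bookkeeping needed to verify that the pulled-back relation $\preceq$ is a well-defined term order rather than merely a total order — one must confirm multiplicativity is inherited and that no degenerate behaviour arises — but this is immediate from the multiplicativity of $\varphi$ on monomials, so there is no real difficulty; the substance of the statement is entirely contained in \cite{RS} and Lemma~\ref{lem:isom}.
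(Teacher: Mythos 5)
Your proposal is correct and follows essentially the same route as the paper: the paper likewise defines $\preceq$ by pulling back the term order of \cite{RS} through the variable bijection $\varphi$ and then invokes \cite[Theorem~6.1]{RS} to transport the quadratic Gr\"obner basis and deduce Koszulness. You merely spell out the routine verifications (that the pullback is a genuine term order and that leading terms and initial ideals transport along a variable-permuting isomorphism) which the paper leaves implicit.
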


We conclude by observing that the isomorphism in Theorem \ref{thm:eqSF} allows us to compute the reduction number of $I_n(H[n-1])$ and the $a$-invariant of its special fiber. 

\begin{Corollary} \label{cor:alginvariant r=n-1}The reduction number of $I_n(H[n-1])$, which coincides with the regularity of its special fiber, is 
    $${\rm{r}}(I_n(H[n-1]))= \reg (\calF[n-1])= \floor{\dfrac{n+2}{2}}.$$
As a consequence, the $a$-invariant of $I_n(H[n-1])$ is given by
$${\rm a}(\calF[n-1])=\floor{\dfrac{n+2}{2}}- (n+2).$$
\end{Corollary}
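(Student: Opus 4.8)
The plan is to transport the computation through the isomorphism $\calF[n-1]\cong\calF(I_2(\LL))$ of Theorem~\ref{thm:eqSF} to the setting of $2$-determinantal ideals treated in \cite{RS}, exploiting the Cohen--Macaulayness of $\calF[n-1]$ (Corollary~\ref{mult}) to link the reduction number, the Castelnuovo--Mumford regularity and the $a$-invariant. I would establish in turn: (i) the coincidence $r(I_n(H[n-1]))=\reg(\calF[n-1])$; (ii) the value $\floor{(n+2)/2}$ of this common quantity; (iii) the formula for $a(\calF[n-1])$.

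For (i), recall that since $\KK$ is infinite and $I_n(H[n-1])$ is equigenerated in degree $n$, a minimal reduction $J$ is generated by $\ell(I_n(H[n-1]))$ general $\KK$-linear combinations of the $n$-minors (Proposition~\ref{Prop1.6}), and by Remark~\ref{rmk:maxanspread} one has $\ell(I_n(H[n-1]))=\dim S_{n-1}=n+2$. The images of these forms constitute a linear system of parameters $\bar J\subseteq\calF[n-1]_1$, which is a regular sequence because $\calF[n-1]$ is Cohen--Macaulay; hence the Artinian quotient $\calF[n-1]/\bar J\,\calF[n-1]$ has Hilbert series equal to the $h$-polynomial of $\calF[n-1]$. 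A Nakayama argument identifies $r_J(I_n(H[n-1]))$ with the top nonzero degree of this quotient, and Cohen--Macaulayness of the fiber cone makes this value independent of the minimal reduction chosen (see \cite{HS,Vas}); therefore $r(I_n(H[n-1]))=\deg h_{\calF[n-1]}(t)=\reg(\calF[n-1])$, the last equality being the standard relation $\reg=a+\dim$ valid for Cohen--Macaulay graded algebras. Running the identical argument for $I_2(\LL)$ and using that the two fiber cones are isomorphic as graded $\KK$-algebras shows that $r(I_n(H[n-1]))$, $\reg(\calF[n-1])$, $r(I_2(\LL))$ and $\reg(\calF(I_2(\LL)))$ all agree.

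For (ii), I would record that $I_2(\LL)$ --- the $2$-minors of the $2\times(n+2)$ Hankel matrix with a single zero in the bottom-right corner --- is the $2$-determinantal ideal of codimension $n+1$ whose Kronecker--Weierstrass normal form has no scroll blocks and exactly one nilpotent Jordan block, i.e.\ a member of the family $\mathscr{H}_{n+1,0}$ in the notation of \cite{RS} (as already observed in the Introduction). Specializing the formula for the regularity of the special fiber of a $2$-determinantal ideal proved in \cite{RS} to this case gives $\reg(\calF(I_2(\LL)))=\floor{(n+2)/2}$; alternatively, one could compute the degree of the $h$-polynomial directly from the quadratic initial ideal produced by the Gröbner basis of Corollary~\ref{cor:SFKoszul}, but this is precisely the computation carried out in \cite{RS}. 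Combining with (i) yields $r(I_n(H[n-1]))=\reg(\calF[n-1])=\floor{(n+2)/2}$. Finally, for (iii), since $\calF[n-1]$ is Cohen--Macaulay of Krull dimension $n+2$, the relation $\reg(\calF[n-1])=a(\calF[n-1])+\dim\calF[n-1]$ gives $a(\calF[n-1])=\floor{(n+2)/2}-(n+2)$.

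The step I expect to be the main obstacle is (ii): one must match the Kronecker--Weierstrass data of $I_2(\LL)$ with the correct case of the \cite{RS} classification --- keeping careful track of the passage between codimension $n+1$ and matrix size $n+2$ and of the exact position of the single zero --- and then check that their formula specializes to $\floor{(n+2)/2}$ rather than to a numerically equal but differently presented expression. A minor but worthwhile point to make precisely is that (i) uses only the Cohen--Macaulayness of the fiber cone (not of the Rees algebra) together with the infiniteness of $\KK$, and that the independence of the reduction number from the chosen minimal reduction in this setting should be cited explicitly.
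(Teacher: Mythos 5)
Your proposal is correct and follows essentially the same route as the paper: transport the computation through the isomorphism $\calF[n-1]\cong\calF(I_2(\LL))$ of Theorem~\ref{thm:eqSF}, identify $I_2(\LL)$ as the $2$-determinantal ideal of KW type with no scroll blocks and one nilpotent Jordan block so that the known formula (the paper cites \cite[Corollary 4.8]{CHV} via \cite{RS}, with $d=0$ and $c=n+2$) gives $\floor{(n+2)/2}$, and then use Cohen--Macaulayness of the fiber cone to equate reduction number with regularity and to deduce the $a$-invariant from $\reg=a+\dim$. Your step (i) simply spells out in more detail the standard justification that the paper compresses into one sentence.
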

\begin{proof} By Theorem \ref{thm:eqSF} and Corollary \ref{mult}, $\calF[n-1]\cong \calF(I_2(\LL))$ and both rings are Cohen-Macaulay. This implies that ${\rm{r}}(I_n(H[n-1]))={\rm{r}}(I_2(\LL))$. As noted at the end of \cite[Section 8]{RS}, we can apply \cite[Corollary 4.8]{CHV} to compute ${\rm{r}}(I_2(\LL))$. Using the terminology of \cite{RS}, the ideal $I_2(\LL)$ is an ideal of KW type $(\emptyset,n+2)$, meaning that its codimension is $n+1$ and the corresponding matrix has no scroll blocks (i.e., $d=0$), and exactly one Jordan block. Applying \cite[Corollary 4.8]{CHV} with $d=0$ and $c=n+2$, we obtain:
$${\rm{r}}(I_n(H[n-1]))={\rm{r}}(I_2(\LL))= \floor{\dfrac{n+2}{2}}.$$

Since $\calF[n-1]$ is Cohen-Macaulay, we have ${\rm{r}}(I_n(H[n-1]))={\reg}(\calF[n-1])$. Moreover,
$${\rm a}(\calF[n-1])= {\reg(\calF[n-1])}- \ell(I_n(H[n-1]))=\floor{\dfrac{n+2}{2}}- (n+2).\eqno\qedhere$$
\end{proof}

\subsection{Application to birational maps}\label{sec:geometry}

We now provide a brief discussion on the geometric counterpart of the previous results. Motivated by previous works on homaloidal hypersurfaces and determinants (see \cite{CRS, MS}), we prove that the projective maps associated with certain reductions of $I_n(\calH[n-1])$ are birational maps. We call these reductions maximal reductions (see Remark \ref{rmk:maxred} and Definition \ref{def:maxred}). Indeed, it is known that for generic Hankel matrices the polar maps associated with the determinant are never birational \cite{Jeff, MS}. Inspired by \cite[Lemma 30]{Huh} (see Equation \eqref{eq:deg-equality} and accompanying discussion), rather than working with the gradient ideal itself, we consider the coordinate sections of the gradient ideal of the generic Hankel matrix, which leads to birational maps in this setting.

\vspace{0.5cm}

\noindent
We now recall some general facts about mixed multiplicities. Let $I=(f_0,\ldots,f_s) \subset S$ be an ideal generated by homogeneous forms of the same degree in $t+1$ variables. The \emph{rational map associated to $I$} is the map on projective space defined by the generators of $I$; we denote it by 
\begin{equation}\label{eq:rationalmap}
 \Theta_I : \mathbb{P}^t \dashrightarrow \mathbb{P}^s.   
\end{equation}

It is well known that the mixed multiplicities of $\mathfrak{m}$ and $I$, denoted by  $e_i(\mathfrak{m} \mid I)$,  coincide with the projective degrees of $\Theta_I$, where $\mathfrak{m}$ is the maximal homogeneous ideal of $S$ (see \cite[Example~19.4]{Harris} and \cite[Remark~4]{Huh}). More precisely, the mixed multiplicity sequence $e_i(\mathfrak{m} \mid I)$ consists of nonnegative integers whose last (or, in some literature, first) nonzero term satisfies
$$
e_{\ell}(\mathfrak{m} \mid I) = \deg(\Theta_I) \cdot e(\mathcal{F}(I)).
$$
 
If  $\Ht (I)>0$ then  $\ell = \ell(I) - 1$ where $\ell(I)$ is the analytic spread of $I$, \cite{KV89}.  

The following result shows that mixed multiplicities are preserved under reductions.

\begin{Lemma}[\cite{Huh}, Lemma~30]
Let $I, J$ be ideals of $S$ such that $J$ is a reduction of $I$. Then 
$$
e_i(\mathfrak{m} \mid I) = e_i(\mathfrak{m} \mid J) \quad \text{for all } 0 \leq i \leq \dim S - 1.
$$
\end{Lemma}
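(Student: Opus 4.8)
The plan is to transplant Rees's classical argument -- that passing to a reduction does not change the Hilbert--Samuel multiplicity -- to the multigraded Rees algebra of the pair $(\mathfrak{m},I)$. First I would isolate the only consequence of the reduction hypothesis I will use: since $J\subseteq I$ is a reduction, $I^{b+1}=JI^{b}$ for $b\gg 0$, so there is a fixed $c$ with $I^{b}=J^{\,b-c}I^{c}$ for all $b\ge c$, and in particular $J^{b}\subseteq I^{b}\subseteq J^{\,b-c}$ for $b\ge c$. Next I would recall that the numbers $e_i(\mathfrak{m}\mid I)$ are, up to the usual normalization, the coefficients of the leading form of the multigraded Hilbert polynomial of the finitely generated multigraded $\mathbb{K}$-algebra $\mathcal{T}(I):=\bigoplus_{a,b\ge 0}\mathfrak{m}^{a}I^{b}\subseteq S[u,t]$, graded by the internal $S$-degree together with the exponent $a$ of $u$ and the exponent $b$ of $t$ (equivalently, one may work with $\bigoplus_{a,b}\mathfrak{m}^{a}I^{b}/\mathfrak{m}^{a+1}I^{b}$, whose bigraded Hilbert polynomial has total degree $\dim S-1$); the same description applies to $J$. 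Note that $\mathcal{T}(I)$ and $\mathcal{T}(J)$ are domains, being subrings of the polynomial ring $S[u,t]$.

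The heart of the matter is that $\mathcal{T}(J)\hookrightarrow\mathcal{T}(I)$ is a module-finite, birational inclusion. Module-finiteness follows from the reduction: $I^{b}=J^{\,b-c}I^{c}$ gives $\mathfrak{m}^{a}I^{b}=\sum_{g}\mathfrak{m}^{a}J^{\,b-c}g$ for $b\ge c$, the sum over a finite generating set of $I^{c}$, so $\mathcal{T}(I)$ is generated as a $\mathcal{T}(J)$-module by finite generating sets of $I^{0},\dots,I^{c}$. Birationality is automatic once $\Ht I>0$: choosing $0\neq f\in J$ and $0\neq x\in\mathfrak{m}$ one has $t=(ft)/f$ and $u=(xu)/x$ in $\operatorname{Frac}\mathcal{T}(J)$, hence $\operatorname{Frac}\mathcal{T}(J)=\operatorname{Frac}S[u,t]=\operatorname{Frac}\mathcal{T}(I)$. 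Now I would invoke the projection formula: a module-finite birational inclusion of finitely generated multigraded domains over $\mathbb{K}$ induces a finite morphism of degree $1$ of the associated multiprojective schemes, which therefore preserves the intersection numbers of the tautological hyperplane classes, that is, the leading form of the multigraded Hilbert polynomial. Applied to $\mathcal{T}(J)\hookrightarrow\mathcal{T}(I)$ this yields equality of these leading forms, hence $e_i(\mathfrak{m}\mid I)=e_i(\mathfrak{m}\mid J)$ for all $0\le i\le\dim S-1$.

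I expect the real work to be bookkeeping rather than conceptual. The fiddly point is to pin down exactly which multigraded Hilbert coefficient of $\mathcal{T}(I)$ equals which $e_i(\mathfrak{m}\mid I)$ -- how the internal $S$-grading interacts with the bigrading by $(a,b)$, and that the degenerate pieces (those accounting for the vanishing of $e_i$ outside the range $0\le i\le\ell(I)-1$) do not affect the leading form; this is where the hypothesis $\Ht I>0$ is used, just as in Rees's original argument. The other delicate step is that ``module-finite and birational implies equal multidegree'' must be applied to honest finitely generated graded domains so that the projection formula is available; here the reduction hypothesis -- and not merely $J\subseteq I$ -- is indispensable, since a non-finite birational inclusion of graded rings can change multidegrees. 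More conceptually, since $J$ being a reduction of $I$ is equivalent to $\overline{J}=\overline{I}$, the statement is precisely the instance, for the pair $(\mathfrak{m},\cdot)$, of the general principle that mixed multiplicities depend only on the integral closures of the ideals involved.
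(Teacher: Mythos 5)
The paper offers no proof of this statement---it is quoted directly from \cite{Huh}---so there is nothing internal to compare against; I can only assess your argument on its own terms. Your overall strategy (Rees's module-finite-birational-extension argument, transplanted to the multi-Rees algebra) is the right one in spirit, and the two structural claims you establish are correct: $\mathcal{T}(J)\subseteq\mathcal{T}(I)$ is module-finite because $I^{b}=J^{\,b-c}I^{c}$ for $b\ge c$, and it is birational because both rings have fraction field $\operatorname{Frac}(S[u,t])$. From this one does get that $C:=\mathcal{T}(I)/\mathcal{T}(J)$ is a torsion $\mathcal{T}(J)$-module, hence of strictly smaller dimension.

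The gap sits precisely in the step you dismiss as bookkeeping. The numbers $e_i(\mathfrak{m}\mid I)$ are the normalized leading coefficients of the bigraded Hilbert polynomial of $\mathcal{T}(I)/\mathfrak{m}\mathcal{T}(I)=\bigoplus_{a,b}\mathfrak{m}^aI^b/\mathfrak{m}^{a+1}I^b$, not of $\mathcal{T}(I)$ itself, whose $(a,b)$-graded pieces are infinite-dimensional. Your projection-formula step legitimately yields equality of the leading forms of the trigraded Hilbert polynomials of $\mathcal{T}(J)$ and $\mathcal{T}(I)$, but this does not transfer across the quotient by $\mathfrak{m}$: applying $-\otimes_{\mathcal{T}(J)}\mathcal{T}(J)/\mathfrak{m}\mathcal{T}(J)$ to $0\to\mathcal{T}(J)\to\mathcal{T}(I)\to C\to 0$ produces a kernel $(\mathfrak{m}^aJ^b\cap\mathfrak{m}^{a+1}I^b)/\mathfrak{m}^{a+1}J^b$ and a cokernel $C/\mathfrak{m}C$ whose dimensions are not controlled by $\dim C\le\dim\mathcal{T}(J)-1$ alone; a priori $C/\mathfrak{m}C$ can be supported in the same dimension as $\mathcal{T}(J)/\mathfrak{m}\mathcal{T}(J)$---the special fiber of a finite birational morphism need not be finite birational. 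Equivalently, in the trigraded picture one has $\dim\mathfrak{m}^aI^b/\mathfrak{m}^{a+1}I^b=\sum_d\bigl[\dim(\mathfrak{m}^aI^b)_d-\dim(\mathfrak{m}^{a+1}I^b)_d\bigr]$, a difference followed by a sum over a range of length growing linearly in $(a,b)$, and such an operation can promote sub-leading terms back into the leading form. The standard repair is a sandwich rather than a projection formula: from $J^b\subseteq I^b\subseteq J^{\,b-c}$ one squeezes the relevant Hilbert function of $I$ between two shifts of that of $J$, which share a leading form. In the equigenerated case (the only one used in this paper, with $I$ and $J$ generated in degree $n$) this is especially clean, since $\dim_{\mathbb{K}}\mathfrak{m}^aI^b/\mathfrak{m}^{a+1}I^b=\dim_{\mathbb{K}}(I^b)_{a+nb}$ and $\dim_{\mathbb{K}}(J^b)_{a+nb}\le\dim_{\mathbb{K}}(I^b)_{a+nb}\le\dim_{\mathbb{K}}(J^{\,b-c})_{a+nb}$; alternatively, your module-finite-birational argument does apply verbatim to the ordinary Rees algebras $S[Jt]\subseteq S[It]$ with the paper's bigrading, whose graded components are finite-dimensional.
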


In particular, the last nonzero entries of the two sequences coincide. Consequently, we obtain
\begin{equation} \label{eq:deg-equality}
\deg(\Theta_I) \cdot e(\mathcal{F}(I)) = \deg(\Theta_J) \cdot e(\mathcal{F}(J)). 
\end{equation}
This latter identity is the key point of this section. 
\begin{Remark}\label{rmk:maxred}
If $J$ is a \emph{minimal reduction} of $I$, then $\mathcal{F}(J)$ is isomorphic to a polynomial ring and $e(\calF (J))=1$; consequently, $\mathrm{deg}(\Theta_J)$ is the largest possible, and $\Theta_J$ might not be birational. Conversely, if $J$ is a reduction of $I$ such that $\Theta_J$ is birational onto its image, then $\deg(\Theta_J) = 1$, and $e(\calF (J))$ attains its maximal value.
\end{Remark}

This observation motivates the following definition: 

\begin{Definition}\label{def:maxred}
Let $S$ be a polynomial ring over an infinite field $\mathbb{K}$, and let $J\subset I \subset S$ be ideals generated by forms of the same degree, with $J$ a reduction of $I$.  
We say that $J$ is a \emph{maximal reduction} of $I$ if $\deg(\Theta_J) = 1$.
\end{Definition}

\begin{Remark} If $I$ admits a maximal reduction, then $\ell(I) = \dim S$. Indeed, a maximal reduction $J$ defines a birational map. In particular, this map is dominant; hence it has maximal analytic spread, i.e. $\ell(J)=\dim \, S$. Since reductions preserve analytic spread, the claim follows. 
\end{Remark}

Now, we recall an algebraic tool to detect birational maps. Roughly speaking if $I$ has maximal analytic spread and \emph{sufficient} linear syzygies then $\Theta_I$ is a birational map.

\begin{Theorem}\label{Bir}\cite[Theorem~ 3.2]{DHS12} Let $\Theta_I: \mathbb{P}^t \dashrightarrow \mathbb{P}^s$ be the rational map of $I$ as in \eqref{eq:rationalmap}. If $\ell(I)=t+1$ and $\rank \,M_1$ is at least $t$,  then $\Theta_I$ is birational onto its image. Here, $M_1$ denotes the sub-matrix of the presentation matrix of $I$ consisting of all linear syzygies.
    
\end{Theorem}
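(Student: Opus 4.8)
The plan is to reduce the statement to the standard Jacobian‑dual criterion for birationality and then to make an explicit inverse map by Cramer's rule. Write $S=\KK[x_0,\dots,x_t]$ (so $\dim S=t+1$), $I=(f_0,\dots,f_s)$ with all $f_i$ of the same degree $d$, and let $W=\overline{\Theta_I(\PP^t)}=\Proj\calF(I)$ be the image. First I would record that birationality onto $W$ is equivalent to the existence of a rational map $\mathfrak{G}\colon\PP^s\dashrightarrow\PP^t$, given by forms in the target coordinates $\mathbf{T}=(T_0,\dots,T_s)$, with $\mathfrak G\circ\Theta_I=\mathrm{id}_{\PP^t}$; since $\Theta_I$ is dominant onto $W$, producing such a one‑sided rational inverse already forces $\Theta_I$ to be birational onto $W$. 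The hypothesis $\ell(I)=t+1=\dim S$ enters precisely here: it says $\dim\calF(I)=t+1$, i.e. $\dim W=t=\dim\PP^t$, so $\Theta_I$ is generically finite onto $W$ and the content of the theorem is that the generic fiber is a single reduced point.

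Next I would build the Jacobian dual matrix from the linear syzygies. Each column of $M_1$ is a syzygy $\sum_i\ell_{ij}(\mathbf{x})f_i=0$ with $\ell_{ij}\in S_1$; writing $\ell_{ij}=\sum_k c_{ijk}x_k$ and interchanging the two summations gives $\sum_k x_k\big(\sum_i c_{ijk}f_i\big)=0$. Setting $b_{kj}:=\sum_i c_{ijk}f_i$ and $\mathcal{B}:=(b_{kj})$, a $(t+1)\times c$ matrix whose entries are degree‑$d$ forms lying in the subring $\KK[f_0,\dots,f_s]=\calF(I)\subseteq S$, this reads $\mathbf{x}\,\mathcal{B}=0$ in $S$; moreover $b_{kj}$ is the value at $\mathbf{T}=\mathbf{f}$ of the linear form $\beta_{kj}(\mathbf{T})=\sum_i c_{ijk}T_i$, so $\mathcal{B}$ is the specialization $\mathbf{T}\mapsto\mathbf{f}$ of $\mathcal B(\mathbf{T})=(\beta_{kj}(\mathbf{T}))$. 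Since the nonzero vector $\mathbf{x}\in S^{t+1}$ lies in the left kernel of $\mathcal{B}$, we obtain $\rank_{\operatorname{Frac}S}\mathcal{B}\le t$ for free.

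The crux is to upgrade this to $\rank_{\operatorname{Frac}S}\mathcal{B}=t$, and this is exactly where $\rank M_1\ge t$ is used, in combination with $\ell(I)=t+1$: $M_1$ and $\mathcal B(\mathbf{T})$ are two \emph{flattenings} of the same family of linear syzygies, and one must show that maximal rank of $M_1$ over $S$, together with the image $W$ having maximal dimension $t$, propagates to maximal rank of the specialized Jacobian dual $\mathcal B(\mathbf{f})$ over $\operatorname{Frac}\calF(I)=\operatorname{Frac}S$. I expect this to be the main obstacle; it is handled via the content–ideal and rewriting analysis of the symmetric‑algebra presentation (the weak Jacobian dual) in \cite{DHS12}, and it is the one step that cannot be shortcut. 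Granting it, the proof finishes by Cramer's rule. Since $\mathcal{B}$ is $(t+1)\times c$ of rank $t$, its left kernel over $\operatorname{Frac}S$ is one‑dimensional, spanned by $\mathbf{x}$, and an explicit generator is the signed maximal‑minor vector $\Delta=(\Delta_0,\dots,\Delta_t)$ with $\Delta_k=(-1)^k\det\mathcal{B}_{\widehat{k}}$ formed from a fixed choice of $t$ columns realizing the rank; as $\rank\mathcal B=t$ not all $\Delta_k$ vanish, so $\Delta=\lambda\,\mathbf{x}$ for some nonzero $\lambda\in S$. Each $\Delta_k$ equals $\widetilde{\Delta}_k(\mathbf{f})$ for the polynomial $\widetilde{\Delta}_k(\mathbf{T})=(-1)^k\det\mathcal{B}(\mathbf{T})_{\widehat{k}}\in\KK[\mathbf{T}]$, and $\mathfrak{G}:=(\widetilde{\Delta}_0:\cdots:\widetilde{\Delta}_t)\colon\PP^s\dashrightarrow\PP^t$ satisfies $(\mathfrak{G}\circ\Theta_I)(\mathbf{x})=(\widetilde{\Delta}_k(\mathbf{f}(\mathbf{x})))_k=(\Delta_k)_k=(x_0:\cdots:x_t)$; hence $\mathfrak{G}|_W$ is a rational inverse of $\Theta_I$ and $\Theta_I$ is birational onto its image.
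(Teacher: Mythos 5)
The paper does not prove this statement at all: it is imported verbatim from \cite[Theorem~3.2]{DHS12} and used as a black box, so there is no internal argument to compare yours against. Your skeleton is nonetheless the right one, and it is essentially how the cited proof is organized: form the Jacobian dual $\mathcal B(\mathbf T)$ from the linear syzygies via the identity $\mathbf x\cdot\mathcal B(\mathbf T)=\mathbf T\cdot M_1(\mathbf x)$, observe that $\mathbf x\cdot\mathcal B(\mathbf f)=0$ gives $\operatorname{rank}\mathcal B(\mathbf f)\le t$, and, once equality holds, recover the inverse map from the signed maximal minors of $\mathcal B(\mathbf T)$ by Cramer's rule.

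The problem is that the one step you flag as ``the main obstacle'' --- upgrading $\operatorname{rank}_{\operatorname{Frac}S}M_1\ge t$ to $\operatorname{rank}\mathcal B(\mathbf f)=t$ --- \emph{is} the theorem, and you dispose of it by citing \cite{DHS12}, i.e.\ by citing the result being proved. This is a genuine gap, not a routine verification: $M_1(\mathbf x)$ and $\mathcal B(\mathbf T)$ are two different flattenings of the tensor $(c_{ijk})$, and no linear-algebra argument transfers the rank of one at a generic point of the source to the rank of the other at a generic point of the image variety $W$. The transfer requires the geometric input $\ell(I)=t+1$ (so $\dim W=t$), used either through the comparison of the fibers of the incidence variety $V(g_1,\dots,g_c)\subset\mathbb P^t\times\mathbb P^s$ over the two projections, or through the inequality $\operatorname{rank}_{\mathbb K[\mathbf T]/Q}\mathcal B\ \ge\ \operatorname{rank}_S M_1-(\dim S-\ell(I))$ established in \cite{DHS12}; none of this is reproduced or replaced in your write-up. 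A smaller point: the assertion $\operatorname{Frac}\calF(I)=\operatorname{Frac}S$ is false as stated (one is a subfield of the other, and their agreement in degree zero is precisely the birationality you are trying to prove); it happens to be harmless where you use it, since matrix rank is insensitive to field extension, but it should not appear as a premise.
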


We now apply these results to $I_n(H[n-1])$.
Let $F=\det(\mathcal{H})$ and denote by $J = (\nabla F)$ its gradient ideal. By \cite[Theorem~4.6]{CMRS}, $J$ is a minimal reduction of $I_n(\mathcal{H})$. Let $J[r]$ denote the specialization of $J$ obtained by setting $ x_{2n-r+2} = \dots = x_{2n+1} =0$ where $0< r\leq n-1$. Then $J[r]$ is a reduction of $I_n({H}[r])$, but it is no longer minimal. Applying identity~\eqref{eq:deg-equality} to $I_n := I_n(\mathcal{H}[n-1])$ and $J[n-1]$ yields

\begin{equation}\label{eq:teta-mult}
    \deg(\Theta_{I_n}) \cdot e(\mathcal{F}[n-1]) = \deg(\Theta_{J[n-1]}) \cdot e(\mathcal{F}(J[n-1])).
\end{equation}

 We will show that $\Theta_{I_n}$ and $\Theta_{J[n-1]}$ are birational maps; hence both have degree $1$. As a consequence, we compute $e(\mathcal{F}(J[n-1]))$.

 \begin{Lemma}\label{bir-ideal} Let $I=I_n({H}[r])$, then for any $0\leq r\leq n-1$ the rational map $\Theta_I$ is birational. 
 \end{Lemma}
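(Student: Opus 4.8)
The strategy is to verify the hypotheses of Theorem~\ref{Bir} (the criterion of \cite{DHS12}) for $I = I_n(H[r])$. By Remark~\ref{rmk:maxanspread}, we already know that $\ell(I_n(H[r])) = \dim S_r = 2n-r+1$, so the first hypothesis $\ell(I) = t+1$ is satisfied with $t = 2n-r$. It therefore remains to exhibit, inside the presentation matrix of $I$, a submatrix $M_1$ of linear syzygies whose rank is at least $t = 2n-r$. The natural source of linear syzygies is the Eagon--Northcott complex: as recalled around \eqref{eq:ENeq}, for every pair of increasing $(n+1)$-subsets of columns and every $k \in [n]$ we have the linear syzygy $\sum_j x_{i_j+k-1}\, T_{i_1\ldots\widehat{i_j}\ldots i_{n+1}} = 0$. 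The plan is to select a well-chosen family of these syzygies and to compute (a lower bound for) the rank of the resulting matrix over the fraction field $\KK(x_1,\ldots,x_{2n+1-r})$, or equivalently over $S_r$ localized at $(0)$.

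\textbf{Key steps.} First I would fix notation: the generators of $I_n(H[r])$ are indexed by the $n$-subsets $\bfi \subset [n+2]$, so the presentation matrix has $\binom{n+2}{n} = \binom{n+2}{2}$ columns, and $M_1$ is the block of rows coming from first syzygies with linear entries. Second, I would observe that the simplest Eagon--Northcott syzygies are those indexed by the unique $(n+1)$-subset missing a single column $\ell$, say $[n+2]\setminus\{\ell\}$, together with a choice of repeated row $k$; the resulting syzygy only involves the $n+1$ generators $T_{[n+2]\setminus\{\ell,m\}}$ for $m \neq \ell$, with coefficients that are (possibly zero) variables $x_{i_j+k-1}$. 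Third, and this is the heart of the argument, I would arrange these syzygies into a matrix and show that its rank is at least $2n-r$. A clean way to do this is to specialize the variables $x_i$ to generic scalars and compute the rank of the resulting scalar matrix numerically/structurally; because rank is lower semicontinuous, a single good specialization gives the bound for the generic matrix. Concretely, one can choose a convenient substitution (e.g.\ $x_i \mapsto t^i$ for an indeterminate $t$, which turns $H[r]$ into a one-parameter degeneration of a Vandermonde-type matrix) and track which maximal minors of the syzygy matrix are nonzero. Finally, having $\ell(I) = t+1$ and $\rank M_1 \geq t$, Theorem~\ref{Bir} yields that $\Theta_I$ is birational onto its image, for every $0 \leq r \leq n-1$.

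\textbf{Main obstacle.} The delicate point is the rank computation for $M_1$: a priori the Eagon--Northcott syzygies involve many vanishing coefficients (recall $x_{i_j+k-1} = 0$ whenever $i_j+k-1 > 2n+1-r$), and for larger $r$ the matrix $H[r]$ has many zeros, so one must be careful that enough non-degenerate linear syzygies survive. I expect the cleanest route is to not try to pin down $\rank M_1$ exactly, but only to produce an explicit $(2n-r)\times(2n-r)$ submatrix with nonzero determinant, by choosing the syzygies and the generators $T_\bfi$ that index the columns so that the submatrix is (block) triangular after a suitable ordering, with the variables $x_1, x_2$ (which never get set to zero) appearing on the diagonal. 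Establishing this triangular structure — essentially a careful bookkeeping of which column index $\bfi$ is ``hit first'' by each chosen syzygy — is the step that requires the most care; once it is in place, the nonvanishing of the determinant is immediate and Theorem~\ref{Bir} applies verbatim. One should also double-check the degenerate small cases ($n=2$, or $r=n-1$) separately, since there the matrix $M_1$ is smallest and the triangularization argument is tightest.
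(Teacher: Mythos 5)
Your overall strategy is exactly the paper's: verify the two hypotheses of Theorem~\ref{Bir}, using Remark~\ref{rmk:maxanspread} for the analytic spread and the Eagon--Northcott syzygies for $M_1$. However, the step you flag as ``the heart of the argument'' --- producing an explicit $(2n-r)\times(2n-r)$ submatrix of $M_1$ with nonzero determinant via a triangularization or a specialization $x_i\mapsto t^i$ --- is never actually carried out, so as written the rank bound $\rank M_1\ge 2n-r$ remains unproved. That is a genuine gap in the proposal.

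The idea you are missing is that no such computation is needed. Since $I_n(H[r])$ is perfect of expected codimension, the Eagon--Northcott complex is a \emph{minimal linear free resolution}, so every first syzygy is linear and $M_1$ is the \emph{entire} presentation matrix of $I_n(H[r])$. An ideal in a domain is a module of rank one, so the rank of its presentation matrix is $\mu(I)-1=\binom{n+2}{2}-1$, and one checks immediately that $\binom{n+2}{2}-1\ge 2n-r$ for all $n\ge 1$ and $0\le r\le n-1$. This is precisely the paper's one-line argument. Your plan of hunting for a nonvanishing maximal minor of a hand-picked submatrix would presumably succeed (the rank really is that large), but it attacks the problem at the wrong level of granularity: the linearity of the whole resolution already hands you the rank of $M_1$ for free, with no bookkeeping about which coefficients $x_{i_j+k-1}$ vanish and no separate treatment of the small cases $n=2$ or $r=n-1$.
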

\begin{proof}
    
The ideal $I_n(H[r])$ is perfect of expected codimension, so it has a minimal linear resolution given by the Eagon-Northcott complex. The rank of the sub-matrix of linear syzygies (here all syzygies are linear) is ${n+2 \choose 2}-1$.  By Remark \ref{rmk:maxanspread},  $\ell(I_n(H[r]))=2n+1-r$. The claim follows from Theorem \ref{Bir}. 
\end{proof}

\begin{Proposition}\label{bir-gradient}  $\Theta_{J[n-1]}$ is a birational map.
\end{Proposition}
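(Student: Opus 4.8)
The plan is to deduce birationality of $\Theta_{J[n-1]}$ from Lemma~\ref{bir-ideal} together with the multiplicity comparison~\eqref{eq:teta-mult}, rather than by a direct syzygy computation on $J[n-1]$ (whose presentation matrix is less transparent). First I would recall that $J[n-1]$ is a reduction of $I_n := I_n(\mathcal{H}[n-1])$, so by~\eqref{eq:teta-mult} we have
\[
\deg(\Theta_{I_n})\cdot e(\mathcal{F}[n-1]) \;=\; \deg(\Theta_{J[n-1]})\cdot e(\mathcal{F}(J[n-1])).
\]
By Lemma~\ref{bir-ideal} applied with $r=n-1$, the map $\Theta_{I_n}$ is birational onto its image, hence $\deg(\Theta_{I_n})=1$, and Corollary~\ref{mult} gives $e(\mathcal{F}[n-1]) = 2^{n+1}-n-2$. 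So the identity becomes $2^{n+1}-n-2 = \deg(\Theta_{J[n-1]})\cdot e(\mathcal{F}(J[n-1]))$.

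The remaining point is to show $\deg(\Theta_{J[n-1]})=1$ directly; the multiplicity identity then additionally yields $e(\mathcal{F}(J[n-1])) = 2^{n+1}-n-2$ as the promised consequence. To get $\deg(\Theta_{J[n-1]})=1$, I would again invoke Theorem~\ref{Bir}: it suffices to check that $J[n-1]$ has maximal analytic spread $\ell(J[n-1]) = \dim S_{n-1} = n+2$ and that the submatrix $M_1$ of linear syzygies of $J[n-1]$ has rank at least $n+1$. Since reductions preserve analytic spread and $\ell(I_n(H[n-1])) = n+2$ by Remark~\ref{rmk:maxanspread}, the first condition holds automatically. For the second, the key observation is that the partial derivatives $\partial F/\partial x_i$ of $F=\det(\mathcal{H})$ are, up to sign and multiplicity, the $n\times n$ minors of $\mathcal{H}$ (catalecticant/Hankel identity), and the specialization $J[n-1]$ is generated by the corresponding sections; its linear syzygies therefore contain (specializations of) the Eagon--Northcott/Plücker-type linear relations among these minors. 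One then argues that enough of these linear syzygies survive the specialization $x_{n+3}=\cdots=x_{2n+1}=0$ to force $\rank M_1 \ge n+1$. Concretely, I would exhibit $n+1$ explicit linearly independent linear syzygies among the generators of $J[n-1]$ — for instance, by tracking which Eagon--Northcott relations~\eqref{eq:ENeq} of $I_n(\mathcal{H})$ restrict to nontrivial relations among the generators of $J[n-1]$ — and check their independence over $\mathbb{K}$.

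The main obstacle I anticipate is precisely the bookkeeping in this last step: identifying the precise generating set of $J[n-1]$ after specialization (some partials may vanish or become proportional) and certifying that at least $n+1$ of the linear syzygies remain linearly independent. A cleaner route, if available, would be to show that $J[n-1]$ and $I_n(H[n-1])$ have the \emph{same} linear-syzygy submatrix up to the specialization, or that $J[n-1]$ is itself minimally generated by a subset of the maximal minors of $H[n-1]$ together with forms whose linear syzygies dominate; then the rank bound is inherited directly from Lemma~\ref{bir-ideal}. Either way, once $\deg(\Theta_{J[n-1]})=1$ is established, the multiplicity formula $e(\mathcal{F}(J[n-1])) = 2^{n+1}-n-2$ follows immediately from~\eqref{eq:teta-mult}, and with it the interpretation of $J[n-1]$ as a maximal reduction in the sense of Definition~\ref{def:maxred}.
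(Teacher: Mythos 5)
Your overall strategy is the same as the paper's: both arguments reduce to Theorem~\ref{Bir}, observe that $\ell(J[n-1])=n+2$ because reductions preserve analytic spread, and then must produce a rank-$(n+1)$ submatrix $M_1$ of linear syzygies of $J[n-1]$. (Your preliminary discussion of the multiplicity identity is harmless but plays no role in proving birationality; it only enters afterwards, in the corollary computing $e(\mathcal{F}(J[n-1]))$.) The problem is that the one substantive step --- actually exhibiting $n+1$ linearly independent linear syzygies of $J[n-1]$ --- is left as a declared intention, and the route you sketch toward it rests on a false premise. The generators of $J[n-1]$ are the specialized partial derivatives of $F=\det(\calH)$; since each variable $x_k$ occupies an entire anti-diagonal of the Hankel matrix, $\partial F/\partial x_k$ is a \emph{sum} of several cofactors, not a single $n$-minor up to a scalar. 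Consequently the Eagon--Northcott relations~\eqref{eq:ENeq}, which are syzygies on the full set of $\binom{n+2}{2}$ maximal minors of $H[n-1]$, do not ``restrict'' to syzygies on the $n+2$ particular linear combinations generating $J[n-1]$: a linear syzygy of the partials corresponds to a syzygy of the minors lying in a specific subspace determined by the coefficient matrix expressing the partials in terms of the minors, and there is no a priori reason any Eagon--Northcott relation lands there. So the proposed bookkeeping would not produce the required $M_1$, and you correctly flag this step as the main obstacle without resolving it.

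The paper closes exactly this gap by importing an explicit $(n+2)\times(n+1)$ matrix of linear syzygies of the gradient ideal $\nabla\det(\calH[n-1])$ from \cite[Lemma~4.2(iii)]{CRS}, displayed as~\eqref{LinSyz}; since $J[n-1]$ contains this gradient ideal among its generators, these columns form a submatrix of $M_1$. Deleting the first row of~\eqref{LinSyz} leaves an upper triangular matrix with nonzero diagonal entries (using $\mathrm{char}\,\KK=0$ or $>n$), so the rank is exactly $n+1$ and Theorem~\ref{Bir} applies. Without that explicit input, or a substitute for it, your argument does not go through; the surrounding framing (analytic spread, the role of Theorem~\ref{Bir}, and the downstream multiplicity computation) is correct.
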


\begin{proof}
  Since $J[n-1]$ is a reduction of $I_n(H[n-1])$, it has maximal analytic spread, that is $\ell(J[n-1])=n+2$. In \cite[Lemma~4.2 (iii)]{CRS}, a sub-matrix of linear syzygies of $J[n-1]$ is computed. More precisely, they provide linear syzygies of the gradient ideal $\nabla f^{(n+1)}$, where $f^{(n+1)}:={\det }(\calH[n-1])$:

\begin{equation}\label{LinSyz}
\begin{pmatrix}
 nx_1&(2n)x_2 & 2(n-1)\,x_3 &2(n-2)x_4 &\dots &&2x_{n+1}\\
(n-1) x_2&(2n-1)x_3 &(2n-3)x_4& (2n-5)x_5 &\dots&& x_{n+2}\\
\vdots &\vdots & \vdots & \vdots & \iddots &&\vdots\\
2x_{n-1}&(n+2)x_{n} & (n)x_{n+1} &(n-2)x_{n+2}& \dots&& 0\\
x_n&(n+1 ) ~~~~x_{n+1}& (n-1)x_{n+2} &0 & \dots && 0\\
0&(n)x_{n+2} & 0 & 0 & \dots && 0\\
-x_{n+2}&0&0&0&\cdots &&0
\end{pmatrix}_{(n+2)\times (n+1)}
\end{equation}

\medskip

Explicitly, $\nabla f^{(n+1)}=(\overline{\frac{\partial F}{\partial x_{1}}}, \ldots ,\overline{\frac{\partial F}{\partial x_{n+2}} })$. Note that $J[n-1]=\nabla f^{(n+1)}+ (\overline{\frac{\partial F}{\partial x_{n+3}}}, \ldots ,\overline{\frac{\partial F}{\partial x_{2n+1}} })$, so the linear syzygies of $\nabla f^{(n+1)}$ form a sub-matrix of the matrix of linear syzygies of $J[n-1]$.
  We claim that the rank of the matrix (\ref{LinSyz}) is exactly $n+1.$
Deleting the first row yields an upper triangular matrix with non-zero diagonal entries, provided that the characteristic of the base field is  bigger than $n$ or is zero. Hence, the rank of this matrix is $n+1$ and 
the claim follows from Theorem~\ref{Bir}. 
\end{proof}

\begin{Corollary} The ideal $J[n-1]$ is a maximal reduction of $I_n(H[n-1])$, and its multiplicity is given by $e(\calF (J[n-1]))=2^{n+1}-n-2.$ 
    
\end{Corollary}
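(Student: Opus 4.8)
The plan is to deduce the statement formally from the birationality results already established, together with the reduction-invariance of mixed multiplicities; there is essentially no new content here beyond bookkeeping.

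First I would record that $J[n-1]$ is a maximal reduction. By the discussion preceding Proposition~\ref{bir-gradient}, $J[n-1]$ is a reduction of $I_n(H[n-1])$, being the specialization of the minimal reduction $J=(\nabla F)$ of $I_n(\calH)$; and by Proposition~\ref{bir-gradient} the associated map $\Theta_{J[n-1]}$ is birational onto its image, so $\deg(\Theta_{J[n-1]}) = 1$. By Definition~\ref{def:maxred} this is precisely the assertion that $J[n-1]$ is a maximal reduction of $I_n(H[n-1])$.

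For the multiplicity I would invoke identity~\eqref{eq:teta-mult}, namely
\[
\deg(\Theta_{I_n}) \cdot e(\calF[n-1]) = \deg(\Theta_{J[n-1]}) \cdot e(\calF(J[n-1])),
\]
with $I_n = I_n(\calH[n-1])$. By Lemma~\ref{bir-ideal} applied with $r = n-1$ the map $\Theta_{I_n}$ is birational, so $\deg(\Theta_{I_n}) = 1$; combined with $\deg(\Theta_{J[n-1]}) = 1$ from the previous step, the identity collapses to $e(\calF(J[n-1])) = e(\calF[n-1])$. Finally Corollary~\ref{mult} gives $e(\calF[n-1]) = 2^{n+1} - n - 2$, which is the claimed value.

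The one point deserving attention --- the closest thing to an obstacle in an otherwise formal argument --- is to make sure that~\eqref{eq:teta-mult} is being applied in the form stated. The equality $e_\ell(\mathfrak m \mid I) = \deg(\Theta_I)\cdot e(\calF(I))$ refers to the last nonzero mixed multiplicity, and the index $\ell = \ell(I)-1$ is the correct one only when $\Ht(I) > 0$, by \cite{KV89}. Here $\Ht(I_n(\calH[n-1])) = 3 > 0$ and $\ell(I_n(H[n-1])) = n+2$ by Remark~\ref{rmk:maxanspread}; since $J[n-1]$ is a reduction of $I_n(H[n-1])$ it has the same analytic spread $n+2$ and also positive height (a reduction of an ideal of positive height has positive height, as the two ideals have the same radical), so the relevant entry of the mixed multiplicity sequence is $\ell = n+1$ on both sides and~\eqref{eq:teta-mult} holds as stated. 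With this in place the corollary is immediate.
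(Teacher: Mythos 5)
Your proposal is correct and follows essentially the same route as the paper: maximality from Proposition~\ref{bir-gradient} plus Definition~\ref{def:maxred}, then the multiplicity from identity~\eqref{eq:teta-mult}, Lemma~\ref{bir-ideal}, and Corollary~\ref{mult}. The extra check on the index $\ell$ of the mixed multiplicity sequence is sound but not something the paper spells out.
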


\begin{proof}
Since $J[n-1]$ is a reduction of $I_n(H[n-1])$ and $\deg(\Theta_{J[n-1]})=1$, then $J[n-1]$ is a maximal reduction. By identity~\eqref{eq:teta-mult} and Lemma~\ref{bir-ideal},  we have $e(\mathcal{F}(J[n-1]))=e(\mathcal{F}[n-1])$. Finally, Corollary~\ref{mult} yields $e(\mathcal{F}(J[n-1]))=2^{n+1}-n-2,$ as claimed.
\end{proof}

\section{Degeneration \texorpdfstring{$r=1$}{r=1}}\label{sec:r=1}

We now turn our attention to the other extremal degeneration, namely when $r=1$ and 

\begin{align*}
H[1]=\begin{pmatrix}
x_1&x_2&\ldots & x_n& x_{n+1}& x_{n+2}\\
\vdots &\vdots  &\iddots &\vdots &\vdots & \vdots \\
x_{n-1}&x_{n}&\ldots & x_{2n-2} &x_{2n-1} &x_{2n}\\
x_{n}&x_{n+1}&\ldots &x_{2n-1} &x_{2n} &0\\
\end{pmatrix}.
\end{align*}

We again begin by determining the defining equations of the special fiber $\calF[1]$; equivalently, a set of generators for the ideal $\calK[1]$. In this case, we will see that a single additional relation, beyond the Pl\"ucker ones, suffices to generate $\calK[1]$. However, this generator has degree $n$. This result provides a negative answer to one of the questions posed in \cite[Question 2.9]{CMRS}.\par
We start by proving that $\calF[1]$ is Gorenstein and that $\calK[1]$ is generated by exactly one additional relation besides the Pl\"ucker ones.

\begin{Proposition} \label{lem:fiberGor}
    The special fiber ring $\calF[1]$ is Gorenstein and $\calK[1]$ is generated by the Pl\"ucker relations and exactly one regular element over $\calF[0]$.
\end{Proposition}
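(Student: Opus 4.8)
The plan is to compare $\calF[1]$ with the well-understood special fiber $\calF[0]$, which is the coordinate ring of the Grassmannian $\Gr(n,n+2)$, defined by the Plücker ideal $\mathscr{P}_{n,n+2}$. By Remark~\ref{rmk:chainF} we already have $\mathscr{P}_{n,n+2}=\calK[0]\subseteq\calK[1]$, so it suffices to understand the quotient. First I would exhibit the defining ideal $\calK[1]$ as the specialization of $\calK[0]$-data: the $n$-minors $[\bfi]$ of $H[1]$ are obtained from those of the generic matrix $\calH_{n,n+2}$ by the substitution $x_{2n+1}\mapsto 0$, so $\calF[1]$ is a one-parameter degeneration of $\calF[0]$. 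The key numerical input is the analytic-spread computation of Remark~\ref{rmk:maxanspread}: $\ell(I_n(H[1]))=\dim S_1=2n$, hence $\Ht(\calK[1])=\binom{n+2}{n}-2n=\Ht(\calK[0])$. Since $\calK[0]$ is prime of this height and $\calK[1]\supseteq\calK[0]$ has the \emph{same} height, and $\calF[1]$ is a domain (it is a subring of a polynomial ring), $\calK[1]$ is a prime of the same dimension as $\calK[0]$; thus $\calK[1]/\calK[0]$ is a minimal prime of $\calP_\calF/\calK[0]=\KK[\Gr(n,n+2)]$ of the \emph{same} dimension, i.e. it corresponds to a divisor on the Grassmannian.

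The second step is to identify this divisor. On $\KK[\Gr(n,n+2)]$, which is a UFD (the Grassmannian has trivial divisor class group when the Picard group is $\ZZ$ generated by $\calO(1)$ — more precisely $\Cl(\KK[\Gr(n,n+2)])$ is trivial since it is a polynomial localization situation; one can instead argue directly), a height-one prime containing no new \emph{quadric} is principal, generated by a single irreducible form. The natural candidate is the image in $\KK[\Gr(n,n+2)]$ of the polynomial defined in \eqref{eq:polynomial f} of the Main Theorem, namely the determinant of the $n^2\times n^2$ matrix in the $x_i$, rewritten via generalized Laplace expansion as a degree-$n$ polynomial $f$ in the $T_\bfi$. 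Concretely I would: (i) check $\psi(f)=0$, so $f\in\calK[1]$, by realizing $f$ as the Laplace expansion of a determinant that vanishes after the substitution $x_{2n+1}=0$ (the analogue of Remark~\ref{rmk:detLisZero}); (ii) check $f\notin\calK[0]=\mathscr{P}_{n,n+2}$, e.g. by a degree/leading-term argument or by noting $f$ is nonzero on the open stratum where all $x_i$ but $x_{2n+1}$ are generic; (iii) check that the image $\bar f$ of $f$ in $\KK[\Gr(n,n+2)]$ is irreducible, or at least that it is a nonzerodivisor whose radical is prime — irreducibility of $\bar f$ forces $(\bar f)$ to be a height-one prime, and by the height count above this prime must equal $\calK[1]/\calK[0]$. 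Then $\calK[1]=\calK[0]+(f)=\mathscr{P}_{n,n+2}+(f)$, and since $\bar f$ is a nonzerodivisor on the Cohen–Macaulay (in fact Gorenstein, being the Grassmannian coordinate ring) ring $\calF[0]$, the quotient $\calF[1]=\calF[0]/(\bar f)$ is again Cohen–Macaulay, of dimension one less, and Gorenstein as a hypersurface section of a Gorenstein ring.

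For the Gorenstein conclusion I would invoke the standard fact that if $R$ is Gorenstein and $g\in R$ is a nonzerodivisor then $R/(g)$ is Gorenstein; here $R=\calF[0]=\KK[\Gr(n,n+2)]$ is Gorenstein (the Grassmannian coordinate ring is Gorenstein, with known $a$-invariant), and $g=\bar f$ is a nonzerodivisor because $\calF[0]$ is a domain and $\bar f\neq 0$ (that is precisely step (ii)). This also explains the phrasing ``exactly one regular element over $\calF[0]$'': $f$ is a nonzerodivisor modulo $\calK[0]$, and $\calK[1]=(\calK[0],f)$. Minimality of the generating set is then automatic: $f$ has degree $n\ge 3$ in the $T_\bfi$ (for $n\ge 3$), hence cannot lie in the ideal generated by the quadratic Plücker relations together with lower-degree elements, so no Plücker relation is redundant and $f$ itself is a minimal generator.

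The main obstacle I expect is step (iii): proving that $\bar f$ is irreducible (equivalently, that $(f)$ is prime modulo $\mathscr{P}_{n,n+2}$), rather than merely that $f$ gives an element of $\calK[1]$ of the expected degree. The height count from maximal analytic spread tells us $\calK[1]/\calK[0]$ has height one, and $f$ is a specific element of it, but a priori $\calK[1]/\calK[0]$ could be $(f)$ with $\bar f$ reducible, or strictly larger. A clean way around this: since $\calF[1]$ is a domain, $\calK[1]$ is prime; since $\calK[1]$ has height one over $\calK[0]$ and $\KK[\Gr(n,n+2)]$ is factorial (its divisor class group vanishes — this needs a citation, e.g. the Grassmannian being a smooth Fano with $\Pic=\ZZ$, so its affine cone's Cl is $\ZZ/(\text{degree of the hyperplane class})$... one should be careful and instead use that the cone is $\QQ$-factorial and invoke a direct argument), the prime $\calK[1]/\calK[0]$ is principal, say $=(h)$ with $h$ irreducible; then $f\in(h)$, and a degree comparison ($\deg h\le\deg f=n$, and $h\notin\mathscr{P}$ forces $\deg h\ge$ something) together with the explicit shape of $f$ pins down $h$ up to a unit times a Plücker-ideal element, giving $\calK[1]=(\mathscr{P}_{n,n+2},f)$. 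Alternatively, and perhaps more robustly, one can avoid factoriality entirely: show $\calF[0]/(\bar f)$ has the same Hilbert series as $\calF[1]$ (both computed independently — the left side from the regular-sequence hypothesis, the right side from \cite{RS}-type or direct resolution data), which forces the surjection $\calF[0]/(\bar f)\twoheadrightarrow\calF[1]$ to be an isomorphism and hence $(\bar f)=\calK[1]/\calK[0]$.
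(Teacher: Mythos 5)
Your core argument is the same as the paper's: use the maximal analytic spread from Remark~\ref{rmk:maxanspread} to get $\dim\calF[1]=2n$, deduce that the kernel of $\calF[0]\twoheadrightarrow\calF[1]$ is a height-one prime, invoke that $\calF[0]\cong\KK[\Gr(n,n+2)]$ is a Cohen--Macaulay UFD (hence Gorenstein) so that this prime is principal, generated by a regular element, and conclude Gorensteinness by cutting a Gorenstein domain by a nonzerodivisor. Two corrections. First, your height bookkeeping is wrong as written: $\Ht(\calK[0])=\binom{n+2}{2}-(2n+1)$ while $\Ht(\calK[1])=\binom{n+2}{2}-2n$, so the heights differ by one --- if they were equal, both ideals being prime would force $\calK[1]=\calK[0]$; what you actually need, and do correctly use later, is exactly that $\calK[1]/\calK[0]$ has height one. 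Second, the factoriality of the Pl\"ucker cone is not something to hedge about or to circumvent via a Hilbert-series comparison: it is a classical fact (the paper cites Hochster), following from $\Cl(\Gr(n,n+2))=\ZZ$ being generated by the hyperplane class, so the class group of the affine cone vanishes. Finally, note that this proposition asserts only the \emph{existence} of one extra regular generator; identifying it with the explicit degree-$n$ polynomial $f_{\Lap}$ from \eqref{eq:polynomial f} and proving its irreducibility modulo the Pl\"ucker ideal is the content of the separate Theorem~\ref{thm:eqSFr=1}, so most of your steps (i)--(iii) belong to that later argument rather than to the proof of this statement.
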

\begin{proof}
 As noted in Remark \ref{rmk:chainF}, the Pl\"ucker ideal is contained in $\calK[1]$. Consider the surjection
\begin{equation}\label{eq:pi1}
    \pi_1:  \calF[0] \twoheadrightarrow \calF[1].
\end{equation}
One has $\faktor{\calF[0]}{\ker(\pi_1)}\isom \calF[1]$. By Remark \ref{rmk:maxanspread} $\calF[1]$ has maximal analytic spread, so we have
$$\dim \faktor{\calF[0]}{\ker(\pi_1)}=\mathrm{dim}(\calF[1])=2n.$$
Moreover, it is known that $\calF[0]\isom\Gr(n,n+2)$ is a Cohen-Macaulay UFD, hence Gorenstein (see \cite[Corollary 3.2]{H}). 
This implies that 
$$\Ht({\ker(\pi_1)})=\mathrm{dim}(\calF[0])-\mathrm{dim}(\calF[1])=1.$$
Hence $\mathrm{ker}(\pi_1)$ is a prime ideal of height one in a UFD, so it is a principal ideal generated by a regular element $f$ over $\calF[0]$.  In particular, since $\calF[0]$ is Gorenstein, $\calF[1]$ is Gorenstein.
\end{proof}

We will show in Theorem \ref{thm:eqSFr=1} that the extra relation $f$ in Proposition \ref{lem:fiberGor} is a polynomial of degree $n$, arising from the generalized Laplace expansion of the following  $n^2 \times n^2$ matrix.

\begin{equation}\label{eq:polynomial f}
    \calL[1]\coloneqq\begin{pmatrix}
    C_1&C_2&\ldots  & C_{n+1}& C_{n+2} &0&0&0& 0 &  \ldots &0& \ldots & 0 \\
    C_2&C_3& \ldots &C_{n+2} & 0 & C_1&C_2 &\ldots&C_{n+1} & \ldots&0&\ldots& 0\\
         0& 0&0 &0 &0&C_2&C_3&\ldots&C_{n+2}&  \ldots&0&\ldots&0\\
     
         \vdots&&&&\vdots&&&& \vdots&  \ddots&&\vdots \\
         0&0&0&0&0&0&0&0&0&\ldots&  C_1&\ldots&C_{n+1}\\
         0&0&0&0&0&0&0&0&0  &  \ldots & C_2&\ldots&C_{n+2}
         
\end{pmatrix}.
\end{equation}

This matrix is organized into $n-1$ vertical blocks, each containing a non-zero sub-block that partially overlaps with the next, forming a staircase-like pattern. The first non-zero block is given by
$$\begin{pmatrix}
    C_1 &\ldots &C_{n+1}&C_{n+2}\\
    C_2 &\ldots &C_{n+2}&0\\
\end{pmatrix},$$ 
while each of the remaining $n-2$ non-zero blocks are given by
$$\begin{pmatrix}
    C_1 \ldots C_{n+1}\\
    C_2 \ldots C_{n+2}\\
\end{pmatrix}.$$ 

\begin{Lemma} The determinant of $\calL[1]$ is zero.
\end{Lemma}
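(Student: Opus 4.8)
The plan is to show that $\calL[1]$ has a nontrivial kernel by explicitly producing row operations that turn one of the horizontal block-rows (equivalently, the corresponding $n$ scalar rows) into an all-zero row, in complete analogy with Remark \ref{rmk:detLisZero}. Recall that $\calL[1]$ is a block matrix whose rows split into $n$ groups of $n$ scalar rows each: the first block-row consists of the columns $C_1,\dots,C_{n+2}$ of $H[1]$ followed by zeros; the second block-row is the ``shifted'' copy $C_2,\dots,C_{n+2},0$ in the first $n+2$ columns and $C_1,\dots,C_{n+1}$ in the next $n+1$ columns; and so on in a staircase pattern. The key point is that within each non-zero sub-block the two stacked rows $(C_1\ \ldots\ C_{n+1})$ and $(C_2\ \ldots\ C_{n+2})$ are themselves the two rows of the $2\times(n+1)$ Hankel-type matrix $\LL$, and more importantly the columns $C_i$ of $H[1]$ satisfy internal linear relations coming from the Hankel structure: the entry in position $(p,i)$ of $H[1]$ equals $x_{p+i-1}$ (with the bottom-right corner set to $0$).

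First I would use the Hankel (anti-diagonal) relation among the \emph{scalar rows} of $H[1]$: the $p$-th row of $H[1]$ is the vector $(x_p, x_{p+1}, \ldots, x_{p+n+1})$, and the last row is $(x_n, x_{n+1}, \ldots, x_{2n}, 0)$, so the shift operator relating consecutive rows is exactly what produces a collapse. Concretely, inside the first block-row of $\calL[1]$, subtracting suitable shifts of the rows of the second (overlapping) block-row — mimicking the row operations $-R_k + R_{n+k-1} \to R_{n+k-1}$ used in Remark \ref{rmk:detLisZero} — I would propagate the cancellations through the staircase: each non-zero sub-block $\begin{pmatrix} C_1 & \ldots & C_{n+1}\\ C_2 & \ldots & C_{n+2}\end{pmatrix}$ overlaps the previous one in exactly the right columns so that after the operations the bottom $n$ rows of the last block-row (or, depending on orientation, the top rows of the first) are left with entries only in a set of columns that is too small — fewer than $n$ columns — for any $n$-minor supported there to be nonzero. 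Hence one full horizontal $n$-block of rows is annihilated up to the span of the others, the rows of $\calL[1]$ are linearly dependent, and $\det\calL[1]=0$.

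Alternatively, and perhaps more cleanly, I would argue by the generalized Laplace expansion directly: expand $\det\calL[1]$ along a well-chosen set of $n$ block-columns and observe that in \emph{every} term at least one complementary minor is a minor of a sub-block that has a repeated Hankel row (because the staircase overlap forces two of the chosen scalar rows to be identical shifts $x_{p+\bullet}$ and $x_{p+\bullet}$), so that minor vanishes. This is the same mechanism as the ``repeated row'' trick used to derive the Eagon–Northcott relations \eqref{eq:ENeq} and the Laplace relations \eqref{eq:lap}: a square Hankel-type array with a shifted-repeated row is singular. Summing, $\det\calL[1]=0$.

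The main obstacle is purely bookkeeping: verifying that the overlap pattern of the $n-1$ vertical blocks is exactly such that the row operations (or the Laplace cancellations) propagate all the way across without leaving a ``leftover'' column that would keep some minor nonzero — i.e., one must check the combinatorics of which columns survive after the cascade of operations and confirm that the surviving support in the collapsed block-row has size at most $n-1$. Once the indexing is set up carefully (tracking that block $j$ occupies scalar columns corresponding to a window of width $n+1$ shifted by one relative to block $j-1$, and that the ``$0$'' in the bottom-right of the first block and the absence of a $C_{n+2}$-column in the later blocks line up), the vanishing is immediate. I do not expect any genuinely new idea beyond the Hankel-singularity phenomenon already exploited in Remark \ref{rmk:detLisZero}; the statement is the $r=1$ analogue of that remark, just with a larger, staircase-shaped ambient matrix.
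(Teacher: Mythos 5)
There is a genuine gap. Your first approach has the right flavor --- the vanishing comes from an explicit linear dependence among the scalar rows, produced by the Hankel shift identity (row $p$ of $(C_2\ \cdots\ C_{n+2})$ equals row $p+1$ of $(C_1\ \cdots\ C_{n+1})$ for $p\le n-1$) --- but the entire content of the proof is identifying \emph{which} rows to combine and with what signs, and you explicitly defer exactly that step as ``bookkeeping.'' The paper's argument is the single observation that
\[
\sum_{j=1}^{n}(-1)^{j+1}R_{\,n^2-jn+j}\;=\;0,
\]
an alternating sum of one scalar row from each of the $n$ row-blocks (namely row $j$ of block $n-j+1$), which telescopes column-block by column-block via the shift identity; this turns one row into zero, so the determinant vanishes. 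By contrast, the end states you describe are not correct: it is not true that a full horizontal block of $n$ rows is ``annihilated up to the span of the others'' (already for $n=3$ the row $(0,\ldots,0,x_3,x_4,x_5,x_6)$ of the last block is not a constant-coefficient combination of the remaining rows --- only the one alternating combination above vanishes), and the ``surviving support in fewer than $n$ columns'' mechanism is specific to Remark~\ref{rmk:detLisZero} and does not transfer here.

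Your alternative Laplace-expansion argument is not salvageable: you claim that in every term of a suitable generalized Laplace expansion at least one factor vanishes because of a ``repeated Hankel row.'' Applied to the expansion \eqref{eq:determinant of L1} along the row blocks, this would force $f_{\Lap}$ to be the zero polynomial, contradicting Proposition~\ref{prop:structure of f} (which exhibits the monomial $T_{2,3,\ldots,n,n+2}^{\,n}$ with nonzero coefficient). The point of the whole construction is precisely that the individual products of minors are nonzero and only their signed sum vanishes; no sub-block of $\calL[1]$ has a repeated row, since consecutive scalar rows are the genuinely distinct shifts $x_{p+\bullet}$ and $x_{p+1+\bullet}$.
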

\begin{proof}
If we replace row $n^2-n+1$ with the linear combination
$$
\sum_{j=1}^{n} (-1)^{j+1} R_{\,n^2-jn+j} \longrightarrow R_{\,n^2-n+1},
$$
this row becomes zero. Consequently, $\det \mathcal{L}[1] = 0.$
\end{proof}

By iteratively applying the generalized Laplace expansion by complementary minors, we obtain a relation among the $n$-minors. Specifically, the determinant of $\mathcal{L}[1]$ can be expanded starting from the $n$-minors of the last $n$ rows and proceeding inductively upward through the successive blocks of $n$ rows. In this way we obtain a relation of the form:

\begin{equation}\label{eq:determinant of L1}
    \det \mathcal{L}[1] =
\sum_{\substack{(A_1,\dots,A_n) \\ A_k \subseteq \{1,\dots,n^2\},\, |A_k|=n \\ A_i \cap A_j = \emptyset}}
\operatorname{sgn}(A_1,\dots,A_n)\;\prod_{k=1}^{n} \det \big(\calL^{(k)}_{A_k}\big),
\end{equation}

where $\calL^{(k)}_{A_k}$ is the $n\times n$ submatrix of the $k$-th row block with columns indexed by $A_k$. Note that many of the terms in this sum are actually zero, depending on the choice of the columns. 
\medskip

We denote by $f_{\Lap}$ the resulting degree-$n$ polynomial in $\mathcal{P}_{\mathcal{F}}$ corresponding to this relation. 
\medskip

The next example illustrates the construction of the polynomial $f_{\Lap}$.

\begin{Example}\label{ex:f-relation}
    Consider $n=3$, the above relation $f_{\Lap}$ is a cubic obtained by the vanishing of the following $9$-minor 
    $$\calL[1]=\begin{pmatrix}
    C_1&C_2&C_3& C_4 &C_5& 0& 0 & 0 & 0 \\
    C_2&C_3&C_4& C_5 &0& C_1&C_2 &C_3&C_4\\
         0& 0&0  &0 &0&C_2&C_3&C_4&C_5
         \end{pmatrix}.$$

Expanding along the $3$-minors of the last block of $3$ rows, we obtain:
\begin{align*}
    \det \calL[1]=&[345]\det \begin{pmatrix}
    C_1&C_2&C_3& C_4 &C_5& 0\\
    C_2&C_3&C_4& C_5 &0& C_1\\
\end{pmatrix}- [245]\det \begin{pmatrix}
    C_1&C_2&C_3& C_4 &C_5& 0\\
    C_2&C_3&C_4& C_5 &0& C_2\\
\end{pmatrix} +\\
&[235]\det \begin{pmatrix}
    C_1&C_2&C_3& C_4 &C_5& 0\\
    C_2&C_3&C_4& C_5 &0& C_3\\
\end{pmatrix}-[234]\det \begin{pmatrix}
    C_1&C_2&C_3& C_4 &C_5& 0\\
    C_2&C_3&C_4& C_5 &0& C_4\\
\end{pmatrix}.
\end{align*}
We can then expand each of these four $6$-minors further. These expansions have already been computed in Equation \eqref{eq:LapMinors}. Using the Laplace polynomials in Definition $\ref{def:Lap}$, the corresponding algebraic relation can be written as
$$f_{\Lap}=T_{345} \Lap_1-T_{245}\Lap_2+T_{235}\Lap_3-T_{234}\Lap_4.$$
Note that $f_{\Lap} \in \calK[n-1]$, in accordance with Remark \ref{rmk:chainF}. However, it is worth emphasizing that in this case the Laplace polynomials are \textit{not} quadratic relations among the $3$-minors, since the $6$-minors appearing above are all non-zero. The explicit expression of $f_{\Lap}$ is  

\begin{align*}
    f_{\Lap}=&T_{345} (+T_{123}T_{345}-T_{124}T_{245}+T_{125}T_{235}+T_{134}T_{145}-T_{135}^2+T_{145}T_{125})\\
    &-T_{245}(T_{234}T_{145}-T_{135}T_{235}+T_{245}T_{125})+T_{235}(-T_{235}^2+T_{234}T_{245}+T_{345}T_{125})\\
    &-T_{234}(T_{234}T_{345}+T_{135}T_{345}-T_{245}T_{235})\\
    =& T_{123}T_{345}^2 -T_{124}T_{245}T_{345} +T_{125}T_{145}T_{345}
   +2T_{125}T_{235}T_{345} -T_{125}T_{245}^2 \\
&\quad +T_{134}T_{145}T_{345} -T_{135}^2T_{345}
   -T_{135}T_{234}T_{345} +T_{135}T_{235}T_{245}
   -T_{145}T_{234}T_{245} \\
&\quad -T_{234}^2T_{345} +2T_{234}T_{235}T_{245}
   -T_{235}^3.
\end{align*}
\end{Example}

By definition, $\psi(f_{\Lap})=\det \calL[1]=0$. Thus, $f_{\Lap} \in \calK[1]$. Our goal is to show that this polynomial, together with the Pl\"ucker relations ~\eqref{eq:general-plucker}, 
minimally generates $\calK[1]$ (see Theorem \ref{thm:eqSFr=1}).
\medskip

We begin with some observations on the structure of the polynomial $f_{\Lap}$ that will be useful to prove  Theorem \ref{thm:eqSFr=1}.

\begin{Proposition}\label{prop:structure of f} The polynomial $f_{\Lap} \in \calP_\calF$ can be written as
$$f_{\Lap}=-T_{2,3,\ldots,n+1}^{\,n-1}\,T_{3,4,\ldots,n+2}-
T_{2,3,\ldots,n,n+2}^{\,n}+ f',$$
where every monomial in $f'$ involves at least one variable $T_\bfi$ different from the first three above.
 \end{Proposition}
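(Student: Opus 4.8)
The plan is to track explicitly, through the iterated Laplace expansion \eqref{eq:determinant of L1}, those monomials in $f_{\Lap}$ that can be divisible only by the three ``largest'' Plücker variables $T_{2,3,\ldots,n+1}$, $T_{2,3,\ldots,n,n+2}$ and $T_{3,4,\ldots,n+2}$, and to compute their coefficients, showing everything else is absorbed in $f'$. The key point is that these three variables correspond, under the poset $\Pi_{n,n+2}$, to the three largest minors: $[2,3,\ldots,n+1] \prec [2,3,\ldots,n,n+2] \prec [3,4,\ldots,n+2]$ and in particular $[3,4,\ldots,n+2]$ is the unique maximal element. So a good first step is to record which $n$-subsets $\bfi$ of $[n+2]$ give $T_{\bfi}$ among these top three, and to note that in the Laplace polynomials $\Lap_{\aa}$ of Definition \ref{def:Lap} the ``large'' minor $T_{[n+2]\setminus\{i-1,j-1\}}$ equals $T_{3,4,\ldots,n+2}$ exactly when $\{i-1,j-1\}=\{1,2\}$, i.e.\ $\{i,j\}=\{2,3\}$, while $T_{2,3,\ldots,n,n+2}$ occurs when $\{i-1,j-1\}=\{1,n+1\}$, i.e.\ $\{i,j\}=\{2,n+2\}$.

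Next I would reduce the whole computation to one $2n\times 2n$ step by observing that in the staircase matrix $\calL[1]$ of \eqref{eq:polynomial f}, the only way to obtain a monomial supported on the top three variables is to make a very rigid choice of columns at every block: in each of the $n-1$ row-blocks one is forced (by the zero pattern and by the requirement that the chosen $n\times n$ minor be nonzero and ``maximal'' in the poset) to pick the columns realizing the top minor of that block. Concretely, for the bottom $n-2$ blocks — each a copy of the $n\times(n+1)$ Hankel-type block with columns $C_1,\ldots,C_{n+2}$ stacked as $\begin{pmatrix} C_1\cdots C_{n+1}\\ C_2 \cdots C_{n+2}\end{pmatrix}$ — the maximal $n$-minor in the $2\times(n+1)$ small matrix sense corresponds, via the duality $\varphi$ of \eqref{eq:Kisom} and Lemma~\ref{lem:isom}, to $[3,4,\ldots,n+2]$, and forcing this choice consumes columns in a way that cascades up the staircase. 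After this forced cascade, the surviving contribution is exactly the generalized Laplace expansion of the single $2n\times 2n$ matrix $\calL_{\aa}$ with $\aa=\{3,4,\ldots,n\}$ type index — i.e.\ one of the quadratic relations $\Lap_{\aa}$ of \eqref{eq:lap} — multiplied by a power of $T_{3,4,\ldots,n+2}$ coming from the forced blocks. Carrying out this bookkeeping, together with the sign tracking already done before Definition~\ref{def:Lap} and in Remark~\ref{rmk:detLisZero}, yields that the two top-variable monomials in $f_{\Lap}$ are $-T_{2,3,\ldots,n+1}^{\,n-1}T_{3,4,\ldots,n+2}$ and $-T_{2,3,\ldots,n,n+2}^{\,n}$; this is essentially the $n=3$ computation of Example~\ref{ex:f-relation} (where one reads off $-T_{135}^2T_{345}\cdot(\text{from }T_{345}\Lap_1)$, matching $T_{2,3,4}^{2}T_{3,4,5}$ with a sign, and $-T_{235}^3$, matching $T_{2,3,5}^{3}$) done for general $n$.

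The main obstacle will be the sign and coefficient bookkeeping in the iterated Laplace expansion: I must show that the forced choices at the $n-1$ blocks contribute the claimed signs, that no cancellation occurs among the few monomials landing on the top three variables, and that the apparent coefficient $2$ seen in intermediate steps of Example~\ref{ex:f-relation} (e.g.\ $+2T_{125}T_{235}T_{345}$, $+2T_{234}T_{235}T_{245}$) always attaches to monomials involving a variable outside the top three, so that it gets swept into $f'$. For this I would set up the sign $\sgn(i,j)$ from Definition~\ref{def:Lap} with $\aa=\{3,\ldots,n+2\}\setminus\{i,j\}$ restricted appropriately, use $\eta_2=0$, $\eta_3=\eta_{n+2}=$ (explicitly computed as in the proof of Lemma~\ref{lem:isom}), and verify the two signs are both $-1$. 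Once the coefficients of $T_{2,3,\ldots,n+1}^{\,n-1}T_{3,4,\ldots,n+2}$ and $T_{2,3,\ldots,n,n+2}^{\,n}$ are pinned down and every other monomial is shown to involve some $T_{\bfi}$ with $\bfi\notin\{\{2,\ldots,n+1\},\{2,\ldots,n,n+2\},\{3,\ldots,n+2\}\}$, the decomposition $f_{\Lap}=-T_{2,3,\ldots,n+1}^{\,n-1}T_{3,4,\ldots,n+2}-T_{2,3,\ldots,n,n+2}^{\,n}+f'$ follows, completing the proof.
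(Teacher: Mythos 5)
Your overall strategy --- trace which column selections in the iterated Laplace expansion of $\calL[1]$ can produce a monomial supported only on $T_{2,\ldots,n+1}$, $T_{3,\ldots,n+2}$, $T_{2,\ldots,n,n+2}$, and show that the selection is forced --- is the same as the paper's. But your central structural claim is wrong: you assert that the bottom $n-2$ row-blocks are forced to contribute the minor $[3,4,\ldots,n+2]$, so that the distinguished part of $f_{\Lap}$ equals a power of $T_{3,\ldots,n+2}$ times a single quadratic relation $\Lap_{\aa}$ coming from one $2n\times 2n$ block. If that were true, every distinguished monomial would be divisible by $T_{3,\ldots,n+2}^{\,n-2}$, which already contradicts the presence of $T_{2,\ldots,n,n+2}^{\,n}$ (no factor of $T_{3,\ldots,n+2}$ at all) for $n\geq 4$, and contradicts $T_{2,\ldots,n+1}^{\,n-1}T_{3,\ldots,n+2}$ (exactly one such factor). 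Your reading of Example~\ref{ex:f-relation} confirms the confusion: the monomial $-T_{135}^2T_{345}$ coming from $T_{345}\Lap_1$ does \emph{not} ``match $T_{234}^{2}T_{345}$'' --- $T_{135}$ has index set $\{1,3,5\}$, which is not one of the three distinguished variables, so that monomial belongs to $f'$. The actual source of $-T_{234}^2T_{345}$ in that example is the term $-T_{234}\Lap_4$, and the source of $-T_{235}^3$ is $T_{235}\Lap_3$; in neither case is the outermost (last row-block) factor equal to $[345]$, so the last block is not forced in the way you claim.

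The correct forcing runs in the opposite direction. The physical column of $\calL[1]$ in position $n+2$ is nonzero only in the first row-block (it is $C_{n+2}$ there and $0$ below), so in every nonzero term of the expansion the minor chosen from row-block $1$ must contain $C_{n+2}$; among the three distinguished minors only $[3,\ldots,n+2]$ and $[2,\ldots,n,n+2]$ qualify. Each of these two choices consumes physical columns in a way that uniquely determines the admissible choice in row-block $2$, then row-block $3$, and so on down the staircase, producing exactly the two monomials $T_{2,\ldots,n+1}^{\,n-1}T_{3,\ldots,n+2}$ and $T_{2,\ldots,n,n+2}^{\,n}$, each arising from a single term of the expansion (hence with coefficient $\pm 1$ and no possible cancellation); any deviation at any stage introduces a variable outside the distinguished three. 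Finally, your planned sign verification is unnecessary effort: the paper deliberately leaves the two signs uncomputed because nothing downstream depends on them (the proof of Theorem~\ref{thm:eqSFr=1} explicitly normalizes both to $+1$), and your proposal in any case only promises to ``verify the two signs are both $-1$'' without doing so.
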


 \begin{proof}
     First, we observe that the two monomials $T_{2,3,\ldots,n+1}^{\,n-1}\,T_{3,4,\ldots,n+2}$ and $
T_{2,3,\ldots,n,n+2}^{\,n}$ appear exactly once in the Laplace expansion, and thus cannot be canceled by any other term.\par 
   The monomial $T_{2,3,\ldots,n,n+2}^{\,n}$ corresponds to choosing, in every row, the $n$-minor on columns
  $$\begin{pmatrix}
       C_2&C_3& \cdots & C_{n}&C_{n+2} 
   \end{pmatrix}.$$
At each step this choice is uniquely determined by the block structure of $\calL[1]$. 

Similarly, the monomial $T_{2,3,\ldots,n+1}^{\,n-1}\,T_{3,4,\ldots,n+2}$  corresponds to choosing once the minor 
$$\begin{pmatrix}
C_3,C_4,\ldots,C_{n+2} 
\end{pmatrix},$$
and $n-1$ times the minor
$$\begin{pmatrix}C_2,C_3,\ldots,C_{n+1}
\end{pmatrix}.$$  
By the structure of $\calL[1]$, $C_{n+2}$ must be always chosen from the first block of rows. Hence, all the other choices needed to form the monomial $T_{2,3,\ldots,n+1}^{\,n-1}\,T_{3,4,\ldots,n+2}$ are uniquely determined.

We note that the signs of these two monomials can be determined using the usual sign formula in the generalized Laplace expansion; however, since they play no role in our arguments, we omit their explicit computation.

   We now prove that any other monomial in the expansion contains at least one variable $T_{\bfi}$ different from the three distinguished ones. Suppose there is a term in the expansion that gives a monomial
   $$M:=T^{\alpha}_{2,3,\ldots,n+1} T^{\beta}_{3,4,\ldots,n+2}T^{\gamma}_{2,3,\ldots, n,n+2}.$$
   We will show that there are only two possibilities for $M$:
   $$T^{n-1}_{2,3,\ldots,n+1} T_{3,4,\ldots,n+2} \quad \text{or} \quad T^{n}_{2,3,\ldots, n,n+2}.$$
   As observed before, in each term of the expansion we always have a minor containing the column $C_{n+2}$ coming from the first block of rows. Therefore, the only possible minors corresponding to the three variables in $M$ are 
   $$\begin{pmatrix}
       C_3&C_4& \cdots &C_{n+2} 
   \end{pmatrix}\quad  \text{or} \quad  \begin{pmatrix}
       C_2&C_3& \cdots & C_{n}&C_{n+2} 
   \end{pmatrix}.$$ 
   Suppose that we have  $\left( C_2\ C_3\  \cdots \ C_{n}\ C_{n+2} \right)$ from the first row. This means that in the second block of rows we selected a minor containing both $C_2$ and $C_{n+2}$. The only such minor among those corresponding to  variables in $M$ is exactly the same, that is  $\left( C_2\ C_3\  \cdots \ C_{n}\ C_{n+2} \right)$. This pattern repeats in each subsequent row, yielding the unique term $T^{n}_{2,3,\ldots, n,n+2}$. 

Alternatively, suppose our term in the expansion contains $\begin{pmatrix}
       C_3&C_4& \cdots &C_{n+2} 
   \end{pmatrix}$ from the first block of rows. This means that in the second block of rows we selected a minor containing $C_2$ and $C_3$, but not $C_{n+2}$. The only choice consistent with $M$ is  $\begin{pmatrix}
       C_2&C_3& \cdots &C_{n+1} 
   \end{pmatrix}.$ Proceeding in this way through all subsequent rows, the selection is uniquely determined, resulting in the term 
   $$T^{n-1}_{2,3,\ldots,n+1} T_{3,4,\ldots,n+2}.\eqno\qedhere$$
 \end{proof}

\begin{Remark} \label{purePower}The polynomial $f_{\Lap}$ is not in the Pl\"ucker ideal. Indeed all monomials in the quadratic Plücker relations are square-free, whereas $f_{\Lap}$ contains a pure power of the variable $T_{2,3,\ldots,n,n+2}$.
    
\end{Remark}

\begin{Example}
If $n=4$ the polynomial $f_{\Lap}$ is a quartic arising from the Laplace expansion of the following $16 \times 16$ matrix:

$$\calL[1]=\begin{pmatrix}
    C_1&C_2&C_3& C_4 &C_5& C_6 &0&0&0& 0 & 0 & 0&0&0&0&0\\
    C_2&C_3&C_4& C_5 &C_6&0& C_1&C_2 &C_3&C_4&C_5&0&0&0&0&0\\
         0& 0&0 &0 &0&0&C_2&C_3&C_4&C_5&C_6&C_1&C_2&C_3&C_4&C_5\\
         0&0&0&0&0&0&0&0&0&0&0 &C_2 &C_3&C_4&C_5&C_6
\end{pmatrix}.$$ 
Expanding $\det(\mathcal{L}[1])$ along the last $4$ rows, we get
$$
\det(\mathcal{L}[1])
= 
[3456]\, \calM_{1}
- [2456]\, \calM_{2}
+ [2356]\, \calM_{3}
- [2346]\, \calM_{4}
+ [2345]\, \calM_{5},
$$

where $\calM_{i}$ denotes the complementary $12\times 12$ minor
$$\calM_{i}=\begin{pmatrix}
    C_1&C_2&C_3& C_4 &C_5& C_6 &0&0&0& 0 & 0 & 0\\
    C_2&C_3&C_4& C_5 &C_6&0& C_1&C_2 &C_3&C_4&C_5&0\\
         0& 0&0 &0 &0&0&C_2&C_3&C_4&C_5&C_6&C_i\\
\end{pmatrix}.$$ 

The complete Laplace expansion yields a polynomial $f_{\Lap} \in \calP_\calF$ whose explicit expression is too lengthy to write here. We therefore record only the terms that will be relevant in the proofs of the subsequent results:

$$f_{\Lap}= -T_{2346}^4-T_{2345}^3T_{3456}+ {\text {\rm{other terms}}}.$$

\end{Example}

We can now show that $f_{\Lap}$ is the sole extra equation, beyond the Pl\"ucker relations, needed to generate the defining ideal of $\mathcal{F}[1]$.

\begin{Theorem}\label{thm:eqSFr=1} The defining ideal of the special fiber of $I_n(H[1])$ is $\calK[1]=\mathscr{P}_{n,n+2}+(f_{\Lap})$.
    
\end{Theorem}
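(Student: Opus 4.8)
The plan is to exploit the Gorenstein structure already established in Proposition~\ref{lem:fiberGor}. We know that $\calK[1] = \mathscr{P}_{n,n+2} + (f)$ for a single element $f$ that is a prime regular element over $\calF[0]$, and from Remark~\ref{purePower} we know $f_{\Lap}\notin\mathscr{P}_{n,n+2}$ and $f_{\Lap}\in\calK[1]$. So the entire task reduces to showing that the image $\overline{f_{\Lap}}$ of $f_{\Lap}$ in $\calF[0] = \mathscr{P}_{n,n+2}$-quotient generates the principal ideal $\overline{\calK[1]} = (\overline f)$; equivalently, that $\overline{f_{\Lap}}$ is (up to a unit, i.e. a nonzero scalar) equal to $\overline f$. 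Since $\calF[0]$ is a graded UFD and $\overline f$ is a prime element, it suffices to prove that $\overline{f_{\Lap}}$ is \emph{irreducible} in $\calF[0]$ and has the same degree as $\overline f$ — or, more directly, to show that $\overline{f_{\Lap}}$ is irreducible and then invoke $f_{\Lap}\in\calK[1]=(\overline f)$ inside $\calF[0]$, which forces $\overline{f_{\Lap}} = g\cdot \overline f$ for some $g\in\calF[0]$; irreducibility of $\overline{f_{\Lap}}$ then makes $g$ a unit.

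Concretely, I would proceed in the following steps. First, reduce modulo the Plücker ideal: pass to $\calF[0]=\calP_\calF/\mathscr{P}_{n,n+2}$, the homogeneous coordinate ring of $\Gr(n,n+2)$, and work with $\overline{f_{\Lap}}$ there; note $\deg f_{\Lap}=n$. Second, establish that $\overline{f_{\Lap}}\neq 0$ — this is exactly Remark~\ref{purePower} together with the fact that a pure power $T_{2,3,\ldots,n,n+2}^{\,n}$ is a standard monomial (a monomial in a chain of the Plücker poset $\Pi_{n,n+2}$) and hence survives in the straightening-law basis of $\calF[0]$; here Proposition~\ref{prop:structure of f} is crucial, since it isolates two specific monomials $T_{2,3,\ldots,n+1}^{\,n-1}T_{3,4,\ldots,n+2}$ and $T_{2,3,\ldots,n,n+2}^{\,n}$ that cannot cancel, and at least the second is standard. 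Third, prove irreducibility of $\overline{f_{\Lap}}$ in $\calF[0]$. The cleanest route: if $\overline{f_{\Lap}}=\overline g\,\overline h$ with $\overline g,\overline h$ non-units of positive degree, compare with the monomial structure from Proposition~\ref{prop:structure of f}. Since $T_{2,3,\ldots,n,n+2}^{\,n}$ appears with nonzero coefficient in the straightened form, one factor must involve a power of $T_{2,3,\ldots,n,n+2}$ and the degrees must split as $a+b=n$; but then the other distinguished monomial $T_{2,3,\ldots,n+1}^{\,n-1}T_{3,4,\ldots,n+2}$ forces an incompatible splitting of the exponent of $T_{2,3,\ldots,n+1}$ unless one factor has degree $0$. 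Finally, combine: $\overline{f_{\Lap}}\in(\overline f)$ in the UFD $\calF[0]$, $\overline f$ is prime, $\overline{f_{\Lap}}$ is irreducible and nonzero, so $\overline{f_{\Lap}}$ and $\overline f$ are associates; pulling back, $\calK[1]=\mathscr{P}_{n,n+2}+(f_{\Lap})$.

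I expect the main obstacle to be the irreducibility argument in step three: one must argue about factorizations inside $\calF[0]$, which is not a polynomial ring, so naive monomial/leading-term reasoning must be replaced by an argument valid modulo the straightening relations. A robust way around this is to use the standard monomial (Hodge algebra / straightening law) basis of $\calF[0]$ and a carefully chosen monomial order (or a degeneration to the toric initial algebra $\ini(\calF[0])$, which is a normal semigroup ring): in the initial algebra the image of $f_{\Lap}$ has a well-defined initial monomial, and irreducibility can be checked there by an exponent-vector argument, then lifted. Alternatively — and perhaps more in the spirit of this paper — one can bypass irreducibility entirely: compute $\Ht(\mathscr{P}_{n,n+2}+(f_{\Lap}))$ directly. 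By Krull's principal ideal theorem applied in the UFD $\calF[0]$, since $\overline{f_{\Lap}}\neq 0$ and $\calF[0]$ is a domain, $\calF[0]/(\overline{f_{\Lap}})$ has dimension $\dim\calF[0]-1 = 2n$, which by Remark~\ref{rmk:maxanspread} equals $\dim\calF[1]$; since $\mathscr{P}_{n,n+2}+(f_{\Lap})\subseteq\calK[1]$ and both define rings of the same dimension over the domain $\calP_\calF$ with $\calK[1]$ prime, and since modulo $\mathscr{P}_{n,n+2}$ the smaller ideal $(\overline{f_{\Lap}})\subseteq(\overline f)$ is a nonzero principal ideal in the one-dimensional-codimension situation, we get $(\overline{f_{\Lap}})=(\overline f)$ (a nonzero principal ideal contained in a height-one prime of a UFD, with the quotient of the same dimension, must equal that prime). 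This dimension-count version reuses exactly the machinery already set up in Proposition~\ref{lem:fiberGor} and Remark~\ref{rmk:maxanspread}, and I would present it as the primary argument, with Proposition~\ref{prop:structure of f}/Remark~\ref{purePower} supplying the essential input $\overline{f_{\Lap}}\neq0$.
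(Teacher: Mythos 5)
Your overall skeleton is the same as the paper's: invoke Proposition~\ref{lem:fiberGor} to know $\ker\pi_1=(\overline f)$ is a height-one prime in the UFD $\calF[0]$, observe $\overline{f_{\Lap}}\in(\overline f)$ and $\overline{f_{\Lap}}\neq 0$, and conclude $(\overline{f_{\Lap}})=(\overline f)$ once $\overline{f_{\Lap}}$ is known to be irreducible. The problem is that the step you yourself identify as the main obstacle --- irreducibility of $\overline{f_{\Lap}}$ in $\calF[0]$ --- is exactly the step you do not actually carry out, and the ``primary argument'' you offer to bypass it is wrong. A nonzero principal ideal contained in a height-one prime of a UFD, with quotient of the same Krull dimension, need \emph{not} equal that prime: in $\KK[x,y]$ both $(x^2)\subset(x)$ and $(xy)\subset(x)$ have one-dimensional quotients, yet neither equals $(x)$. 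Dimension (equivalently, height) is blind to the multiplicity of $\overline f$ in $\overline{f_{\Lap}}$ and to extra prime factors, so the Krull-dimension count from Remark~\ref{rmk:maxanspread} cannot replace irreducibility. (A degree comparison would suffice, but $\deg\overline f$ is not known a priori --- it is part of what the theorem establishes.) Your fallback sketch for irreducibility (``incompatible splitting of the exponent'' read off the straightened form, or a SAGBI degeneration) is only a plan, and as you note, naive monomial reasoning is not valid in the quotient $\calF[0]$.

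The paper closes this gap with a short specialization trick you are missing. Set to zero every variable $T_{\bfi}$ other than the three distinguished ones $T_{2,\ldots,n+1}$, $T_{3,\ldots,n+2}$, $T_{2,\ldots,n,n+2}$ from Proposition~\ref{prop:structure of f}. Because these three index sets pairwise share $n-1$ elements, Remark~\ref{nonTrivialPL} shows no nontrivial Pl\"ucker relation survives, i.e.\ $\mathscr{P}_{n,n+2}$ is contained in the specialization ideal; hence any homogeneous factorization $f_{\Lap}\equiv gh \pmod{\mathscr{P}_{n,n+2}}$ specializes to a genuine factorization of
\begin{equation*}
\overline f \;=\; T_{2,\ldots,n+1}^{\,n-1}\,T_{3,\ldots,n+2}+T_{2,\ldots,n,n+2}^{\,n}
\end{equation*}
in the honest polynomial ring $\KK[T_{2,\ldots,n+1},T_{3,\ldots,n+2},T_{2,\ldots,n,n+2}]$, where $\overline f$ is irreducible (it is primitive and linear in $T_{3,\ldots,n+2}$ over the UFD $\KK[T_{2,\ldots,n+1},T_{2,\ldots,n,n+2}]$). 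This forces one specialized factor, hence one original factor, to have degree zero. You would need to supply this (or an equivalent rigorous) argument before your conclusion is justified.
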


\begin{proof}
    Let $\pi_1$ be the surjection in \eqref{eq:pi1}. We show that $f_{\Lap}$ is irreducible modulo the ideal $\mathscr{P}_{n,n+2}$. Since $\mathcal{F}[0]$ is a UFD, the ideal generated by an irreducible element is prime, and thus $\mathrm{ker}\,\pi_1 = (f_{\Lap})$. This implies that $\calK[1]=\mathscr{P}_{n,n+2}+(f_{\Lap})$. In what follows we write  $\mathscr{P}$ for $\mathscr{P}_{n,n+2}$ to simplify the notation. 

 From Proposition~\ref{prop:structure of f} and Remark \ref{purePower} we know that  $f_{\Lap} \notin \mathscr{P}$ and can be written as 

$$f_{\Lap}=T_{2,3,\ldots,n+1}^{\,n-1}\,T_{3,4,\ldots,n+2}+
T_{2,3,\ldots,n,n+2}^{\,n}+ f'$$ 
 
where every monomial in $ f'$ involves at least one variable $T_\bfi$ not among 
$$T_{2,3,\ldots,n+1}, \ T_{3,4,\ldots,n+2}, \ T_{2,3,\ldots,n,n+2}.$$ 
To show that $f_{\Lap}$ is irreducible modulo $\mathscr{P}$, we will specialize all the variables different from the three above to zero. Thereby, we obtain an irreducible polynomial in a smaller ring, and we deduce that $f_{\Lap}$ is irreducible over $\mathscr{P}$. Note that the signs of the two (unique) monomials in the expansion will be irrelevant for the proof. For convenience, we therefore assume that both terms have positive sign. Let 
$$ \mathscr{T}:=
\left( T_{\bfi} \mid \bfi \notin \{[2,3,\ldots,n+1],\ [3,4,\ldots,n+2],\ [2,3,\ldots,n,n+2]\}\right)$$
and consider the specialization 
 obtained by setting all variables in $\mathscr{T}$ to zero.
 Notice that $\mathscr{P} \subseteq \mathscr{T}$, that is, after specializing, all Pl\"ucker relations become zero. Indeed, any pair among the minors 
$[2,3,\ldots,n+1]$, $[3,4,\ldots,n+2]$, and $[2,3,\ldots,n,n+2]$ share $n-1$ indices; hence, by Remark~\ref{nonTrivialPL}, no nontrivial Plücker relation can be formed from the product of two of them. Therefore, after specialization,

$$\dfrac{\calP_\calF}{\mathscr{P}+(f_{\Lap})+\mathscr{T}} =\dfrac{\mathbb{K}\bigl[T_{2,3,\ldots,n+1},\ T_{3,4,\ldots,n+2},\  T_{2,3,\ldots,n,n+2}\bigr]}{(\overline{f})},$$

where $$\overline{f}=T_{2,3,\ldots,n+1}^{\,n-1}\,T_{3,4,\ldots,n+2}+
T_{2,3,\ldots,n,n+2}^{\,n}.$$
Note that $\overline f$ is a primitive linear polynomial in $T_{3,4,\ldots,n+2}$ over the UFD $A:=\mathbb{K}\bigl[T_{2,3,\ldots,n+1},\,T_{2,3,\ldots,n,n+2}\bigr]$ whose coefficients are coprime; hence, $\overline f$ is irreducible over $A\bigl[T_{3,4,\ldots,n+2}\bigr]$. \par 

The irreducibility of $\overline f$ over $A\bigl[T_{3,4,\ldots,n+2}\bigr]$ implies the irreducibility of $f_{\Lap}$ over $\calF[0]$. In fact, suppose that $f_{\Lap} - gh \in \mathscr{P}$ for some polynomials $g$ and $h$ in $\calP_\calF$. Since $f_{\Lap}$ is homogeneous, we may choose $g$ and $h$ to be homogeneous in $\calF[0]$. Because all the Pl\"ucker relations become $0$ after specialization, we get

$$\overline{f}=\overline{g} \overline{h}.$$

 Since $\overline{f}$ is an irreducible homogeneous polynomial, one of the factors, say $\overline{g}$, must be a unit (of degree zero). Therefore $g$ is a homogeneous polynomial of degree zero modulo $\mathscr{P}$, so a unit modulo $\mathscr{P}$. This completes the proof.

\end{proof}

\begin{Remark}
    The proof of irreducibility is independent of the base field and the polynomial $f_{\Lap}$ is geometrically irreducible.
\end{Remark}

\begin{Corollary} \label{cor:alginvariant r=1}The $a$-invariant of $\calF[1]$ is $-2$, and its regularity is $2n-2$ which coincides with the reduction number of $I_n(H[1]).$

\end{Corollary}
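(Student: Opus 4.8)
The plan is to use the structural description from Proposition~\ref{lem:fiberGor} and Theorem~\ref{thm:eqSFr=1}, namely that $\calF[1]\cong\calF[0]/(f_{\Lap})$ with $f_{\Lap}$ a homogeneous nonzerodivisor of degree $n$ on the Grassmannian coordinate ring $\calF[0]$. Since $\calF[0]$ is arithmetically Gorenstein, the computation of the $a$-invariant of $\calF[1]$ reduces to knowing $a(\calF[0])$ and tracking the shift produced by cutting down by a degree-$n$ regular element; the rest of the statement is then formal.

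First I would record that $a(\calF[0])=-(n+2)$. This is the $a$-invariant of the homogeneous coordinate ring of $\Gr(n,n+2)\cong\Gr(2,n+2)$ in its Plücker embedding: the Grassmannian is Fano of index $n+2$ there, so $\omega_{\calF[0]}\cong\calF[0](-(n+2))$ (see, e.g., \cite{BV}); as a sanity check, for $n=2$ the ring $\calF[0]$ is a quadric hypersurface in $\PP^5$, with $a=-4=-(n+2)$. Next, from the short exact sequence $0\to\calF[0](-n)\xrightarrow{f_{\Lap}}\calF[0]\to\calF[1]\to 0$ and the standard behaviour of canonical modules under quotient by a regular element one gets $\omega_{\calF[1]}\cong(\omega_{\calF[0]}/f_{\Lap}\omega_{\calF[0]})(n)$, and hence $a(\calF[1])=a(\calF[0])+n=-(n+2)+n=-2$.

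The rest is bookkeeping. The ring $\calF[1]$ is Cohen--Macaulay (indeed Gorenstein by Proposition~\ref{lem:fiberGor}) of dimension $\ell(I_n(H[1]))=2n$ by Remark~\ref{rmk:maxanspread}, so $\reg\calF[1]=a(\calF[1])+\dim\calF[1]=-2+2n=2n-2$; and since $\calF[1]$ is Cohen--Macaulay, the reduction number of $I_n(H[1])$ equals $\reg\calF[1]$, exactly as in the proof of Corollary~\ref{cor:alginvariant r=n-1}. This yields all three assertions.

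The only genuinely non-formal ingredient is the value $a(\calF[0])=-(n+2)$, so that is where the argument needs care; everything else follows from the exact sequence and the formula $\reg=a+\dim$ for Cohen--Macaulay graded rings. An alternative, self-contained route to $a(\calF[1])=-2$ would be to compute the $h$-polynomial of $\Gr(2,n+2)$ (which is palindromic of degree $n-1$), use $H_{\calF[1]}(t)=(1-t^n)H_{\calF[0]}(t)$, observe that the numerator of the reduced Hilbert series of $\calF[1]$ then has degree $2n-2$, and invoke Cohen--Macaulayness to read off $a(\calF[1])=(2n-2)-2n=-2$.
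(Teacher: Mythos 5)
Your argument is correct and follows essentially the same route as the paper: both compute $a(\calF[1])=a(\calF[0])+\deg(f_{\Lap})=-(n+2)+n=-2$ using that $f_{\Lap}$ is a degree-$n$ regular element on $\calF[0]$, then use Cohen--Macaulayness to get $\reg\calF[1]=a+\dim=2n-2$ and identify this with the reduction number. The only cosmetic difference is the source for $a(\calF[0])=-(n+2)$ (you invoke the Fano index of the Grassmannian, the paper cites Bruns--Herzog's computation for the fiber cone of generic maximal minors together with the isomorphism of \cite[Proposition 4.1]{CMRS}), which does not change the substance.
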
 

    \begin{proof} Let $X$ be a generic matrix of size $n\times (n+2)$. By \cite[Corollary 1.4]{BH} we know that

$$a(\mathcal{F}(I_n(X)))=-(n+2).$$

It follows by the isomorphism in \cite[Proposition 4.1]{CMRS}, that $a(\calF[0])=-(n+2).$

Since $f_{\Lap}$ is a regular homogeneous element of degree $\mathrm{deg}(f_{\Lap})=n$ over $\calF[0]$, by  \cite[Corollary 3.6.14]{BrunsH} we have

$$a(\calF [1])=a(\calF[0])+\mathrm{deg}(f_{\Lap})=-n-2+n=-2.$$

The $a$-invariant equals the degree of the Hilbert series of the corresponding ring viewed as a rational function. In particular, since $\calF[1]$ is Cohen-Macaulay, we have $$a(\calF [1])=\reg(\calF[1])-\dim(\calF[1]).$$ Moreover, by \cite[Proposition 1.85]{Vas} we have that $\reg(\calF[1])={\rm r}(I_n(H[1]))$. Therefore 
$$-2=\reg(\calF[1])-\dim(\calF[1])={\rm r}(I_n(H[1]))-\ell (I_n(H[1]))={\rm r}(I_n(H[1]))-(2n).$$
 Hence $\reg(\calF[1])={\rm r}(I_n(H[1]))=2n-2.$
    \end{proof}
  \medskip
 \begin{Corollary}\label{cor: mult}
     The multiplicity of $\calF[1]$ is $\frac{n}{n+1} {2n \choose n}.$
 \end{Corollary}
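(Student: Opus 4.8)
The plan is to read off the multiplicity directly from the presentation obtained in Theorem~\ref{thm:eqSFr=1}: by that result together with Proposition~\ref{lem:fiberGor} we have $\calF[1]\cong\calF[0]/(f_{\Lap})$, where $f_{\Lap}$ is a homogeneous non-zerodivisor on $\calF[0]$ of degree $n$. So the computation reduces to two independent ingredients: the multiplicity of $\calF[0]$, and the effect on multiplicity of killing a homogeneous regular element.

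For the second ingredient, I would use the standard Hilbert-series computation: if $R$ is a standard graded $\KK$-algebra and $x\in R$ is a homogeneous non-zerodivisor of degree $d$, then $H_{R/(x)}(t)=(1-t^{d})H_{R}(t)$; writing $H_{R}(t)=Q(t)/(1-t)^{\dim R}$ with $Q(1)=e(R)$ gives $H_{R/(x)}(t)=(1+t+\dots+t^{d-1})Q(t)/(1-t)^{\dim R-1}$, hence $e(R/(x))=d\cdot e(R)$ (this is exactly the Hilbert-series argument already used in the proof of Corollary~\ref{cor:alginvariant r=1}; see also \cite{BrunsH}). Applying this with $R=\calF[0]$, $x=f_{\Lap}$, and $d=n$ yields $e(\calF[1])=n\cdot e(\calF[0])$.

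For the first ingredient, by Remark~\ref{rmk:chainF}, that is \cite[Proposition~4.1]{CMRS}, the ring $\calF[0]$ is isomorphic to the homogeneous coordinate ring $\KK[\Gr(n,n+2)]$ of the Grassmannian in its Pl\"ucker embedding, and since $I_n(\calH)$ is generated in a single degree this isomorphism respects the standard gradings; thus $e(\calF[0])=\deg\Gr(n,n+2)$. By the classical self-duality $\Gr(n,n+2)\cong\Gr(2,n+2)$ underlying the isomorphism~\eqref{eq:Kisom}, the Pl\"ucker degree is preserved, and $\deg\Gr(2,n+2)$ is the Catalan number $\frac{1}{n+1}\binom{2n}{n}$; equivalently, the general degree formula for Grassmannians gives $\deg\Gr(n,n+2)=(2n)!\prod_{i=0}^{n-1}\frac{i!}{(i+2)!}=\frac{(2n)!}{n!\,(n+1)!}=\frac{1}{n+1}\binom{2n}{n}$. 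Combining the two steps, $e(\calF[1])=n\cdot\frac{1}{n+1}\binom{2n}{n}=\frac{n}{n+1}\binom{2n}{n}$, as claimed. No serious obstacle arises here; the only point deserving care is to confirm that the grading on $\calF[0]$ induced from $\calR[0]$ is the standard one, so that $e(\calF[0])$ is literally the projective Pl\"ucker degree of $\Gr(n,n+2)$, and to record that this degree is the $n$-th Catalan number.
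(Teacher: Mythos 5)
Your proposal is correct and follows essentially the same route as the paper: both identify $e(\calF[0])$ with the Pl\"ucker degree of $\Gr(n,n+2)\cong\Gr(2,n+2)$, namely the Catalan number $\frac{1}{n+1}\binom{2n}{n}$, and then multiply by $\deg(f_{\Lap})=n$ using the fact that $\calF[1]\cong\calF[0]/(f_{\Lap})$ with $f_{\Lap}$ a homogeneous non-zerodivisor. The only difference is that you spell out the Hilbert-series justification for $e(R/(x))=d\cdot e(R)$, which the paper leaves implicit.
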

\begin{proof}
   The isomorphism between the Grassmannian $\Gr(n,n+2)$ and $\calF[0]$ implies that they have the same multiplicity. By duality property of Grassmannians, $\Gr(n,n+2)\cong \Gr(2,n+2)$, and the multiplicity of $\calF[0]$ is the multiplicity of $\Gr(2,n+2)$ which is the well-known Catalan number, $\frac{1}{n+1}{2n\choose n}$, (see \cite[Example 14.7.11]{Fulton}). Since $f_{\Lap}$ is a homogeneous regular element of degree $n$ in $\calF[0]$, and $\calF[1] \isom \calF[0]/(f_{\Lap})$, then
   $$
e(\calF[1]) = \deg(f_{\Lap}) \cdot e(\calF[0]) = n \cdot \frac{1}{n+1} \binom{2n}{n}.
\eqno\qedhere$$
\end{proof}

We conclude this section by observing that $I_n(H[1])$ is of fiber type. In particular, this provides explicit defining equations for  $\calR[1]$. In doing so, we also prove that $I_n(H[1])$ has linear powers, that is, all its powers have linear resolution. It is known that if the Rees algebra of an ideal $I$ is Koszul, then $I$ has linear powers \cite[Corollary~3.6]{Blum}. This applies in particular to non-degenerate Hankel determinantal ideals, whose Rees algebras were shown to be Koszul by Conca \cite[Theorem~4.7]{Co}.
The next result shows that $I_n(H[1])$ also has linear powers, although its Rees algebra is not Koszul.

\begin{Proposition}\label{prop:r=1 fiber type}
    The Rees algebra $\calR[1]$ is of fiber type and $I_n(H[1])$ has linear powers. In particular, its defining ideal $\calJ[1]$ is generated by $f_{\Lap}$ together with the Eagon–Northcott relations \eqref{eq:ENeq} and the Plücker relations \eqref{eq:general-plucker}.
\end{Proposition}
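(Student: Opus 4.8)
The plan is to prove that $\calR[1]$ is of fiber type, deduce that $I_n(H[1])$ has linear powers, and then observe that $\calR[1]$ is nonetheless not Koszul. For the fiber type statement, I would argue as follows. We already know from Theorem~\ref{thm:eqSFr=1} that $\calK[1] = \mathscr{P}_{n,n+2} + (f_{\Lap})$, and that the symmetric algebra of $I_n(H[1])$ is presented by the Eagon--Northcott linear syzygies \eqref{eq:ENeq}; so the ideal $\calJ_0 := \mathscr{P}_{n,n+2}\calP_\calR + (f_{\Lap}) + (\text{ENrelations})$ is contained in $\calJ[1]$, and it suffices to prove the reverse inclusion, i.e.\ that $\calJ[1]$ is generated in bidegrees $(0,*)$ and $(*,1)$. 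The cleanest route is to compare $\calR[1]$ with the Rees algebra $\calR[0]$ of the generic Hankel ideal $I_n(\calH)$: by \cite{CHV} (or \cite{Co}, via \cite[Theorem~4.7]{Co}) the ideal $I_n(\calH)$ is of fiber type, so $\calJ[0]$ is generated by the Plücker relations and the (generic) Eagon--Northcott relations. Specializing the variables $x_{2n+1}\mapsto 0$ and passing from $\calH$ to $H[1]$ sends generators of $\calJ[0]$ to generators of $\calJ_0$, so one gets $\calJ[1] \supseteq \calJ_0 + (\text{something in bidegree }(0,*)\text{ coming from the new special-fiber relation})$; the genuinely new content is exactly the single extra degree-$n$ fiber relation $f_{\Lap}$, already accounted for. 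The precise way to make this rigorous is a height/primality count: $\calR[1]$ has dimension $\dim S + 1 = 2n+2$, hence $\Ht \calJ[1] = \#\{T_\bfi\} - 1 = \binom{n+2}{2} - 1$; one shows $\calP_\calR/\calJ_0$ has the same dimension and is a domain (using that modulo the ENrelations the $T_\bfi$ become the minors, so $\calP_\calR/(\text{ENrel}) \twoheadrightarrow \calR[1]$ factors and, modulo further imposing the fiber relations which cut out exactly $\calF[1]$, one recovers $\calR[1]$ on the nose). Since $\calJ_0 \subseteq \calJ[1]$, equality of heights plus primality of $\calJ[1]$ forces $\calJ_0 = \calJ[1]$.

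For the linear powers statement, once $\calR[1]$ is of fiber type with the explicit presentation above, I would invoke the standard machinery relating the bigraded Betti numbers of $\calR(I)$ to the resolutions of the powers $I^a$: the $a$-th power $I^a$ has a linear resolution if and only if $\Tor_i^{\calP_\calR}(\calR[1],\KK)_{(j,a)} = 0$ for $j > na + i$ (degrees normalized so that generators of $I^a$ sit in degree $na$). Since the generators of $\calJ[1]$ all have bidegrees $(2,0)+\text{(Plücker, degree }2n)$... more precisely: the Plücker relations have bidegree $(0,2)$, the Eagon--Northcott relations bidegree $(1,1)$, and $f_{\Lap}$ bidegree $(0,n)$. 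All of these are "linear in $x$ relative to their $T$-degree" in the appropriate sense (the $x$-degree is $0$ or $1$), which is exactly the numerical condition that propagates to linearity of all the $I^a$. The efficient argument is: $\calF[1]$ is Koszul by the analogue of Corollary~\ref{cor:SFKoszul} — wait, $\calF[1]$ is \emph{not} Koszul since it has a degree-$n$ relation — so instead I would use the criterion of \cite{Blum} in reverse is not available; rather, I would apply a result in the spirit of \cite[Theorem~\ldots]{HHZ} / \cite{Bruns-Conca-Roemer} that fiber type plus linear symmetric-algebra relations plus the fiber ring having $x$-degree $0$ relations forces linear powers, or, most concretely, directly compute that $\reg I^a = na$ for all $a$ by combining the fiber-type presentation with the known regularity $\reg \calF[1] = 2n-2$ from Corollary~\ref{cor:alginvariant r=1} and a spectral sequence / Koszul homology argument bounding $\reg_S I^a$.

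Finally, $\calR[1]$ is not Koszul: this is immediate from the presence of the degree-$n$ generator $f_{\Lap}$ of $\calJ[1]$ with $n > 2$ (for $n \geq 3$), since a Koszul algebra must have a quadratic defining ideal, and $f_{\Lap}$ is a minimal generator of $\calJ[1]$ of degree $n \geq 3$ by Remark~\ref{purePower} together with Theorem~\ref{thm:eqSFr=1} (it is not in the ideal generated by the Plücker and Eagon--Northcott relations, as its image in $\calP_\calF$ is a minimal generator of $\calK[1]$).

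The main obstacle I anticipate is the fiber-type assertion itself — specifically, controlling what happens to the defining ideal under the specialization from $\calH$ to $H[1]$. Specialization can in principle create new relations not visible generically, and the content of Theorem~\ref{thm:eqSFr=1} is precisely that in bidegree $(0,*)$ only $f_{\Lap}$ is new. One must argue that no new relations arise in mixed bidegrees $(k,\ell)$ with $k \geq 1$, $\ell \geq 2$; the height/primality argument above is designed to sidestep a direct syzygy computation, but it requires knowing $\calP_\calR/\calJ_0$ is a domain of the expected dimension, which in turn leans on $\calF[1]$ being a domain (true, it is a quotient of the Grassmannian coordinate ring by a prime principal ideal) and on the symmetric algebra of $I_n(H[1])$ being reduced/irreducible enough along the relevant component. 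Making this last point precise — essentially that $I_n(H[1])$ satisfies a condition like $G_\infty$ or that its symmetric algebra surjects onto the Rees algebra with kernel concentrated in fiber bidegrees — is where the real work lies, and I would expect to borrow the corresponding argument structure from \cite{RS} for $2$-determinantal ideals, adapting it to this $n$-determinantal setting using that the Eagon--Northcott complex still resolves $I_n(H[1])$.
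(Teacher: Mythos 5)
There is a genuine gap. Your reduction of the fiber-type claim to ``$\calJ_0\subseteq\calJ[1]$, equal heights, $\calJ[1]$ prime, hence equal'' is not a valid inference: a prime ideal containing an ideal of the same height is only a \emph{minimal prime} of that ideal, and $\calJ_0$ could still be strictly smaller (it could be non-radical, or have embedded components). You acknowledge that what is really needed is that $\calP_\calR/\calJ_0$ is a domain of the expected dimension, but that is never established, and proving it is essentially the entire content of the proposition --- so the argument is circular at exactly the point where the work lies. The linear-powers part has the same problem in a different guise: you cycle through several possible criteria (bigraded Betti vanishing, Koszulness of the fiber --- which you correctly note fails here --- and an unspecified regularity computation) without committing to or completing any of them. (There is also a small slip: $\dim S_1=2n$, so $\dim\calR[1]=2n+1$, not $2n+2$, though your height count $\binom{n+2}{2}-1$ comes out right.)

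The missing idea is that both assertions follow in one stroke from \cite[Theorem 3.7]{BCV}, a general result for ideals of maximal minors of a matrix with linear entries: under the height condition $\Ht(I_j(H[r]))\geq \min\{2(n+1-j)+1,\,2n+1-r\}$ on the lower-order minors, the ideal is of fiber type and has linear powers. For $r=1$ these hypotheses are verified by \cite[Proposition 2.4]{CMRS} (and, notably, they \emph{fail} for $r\geq 2$, which is why the paper cannot push this further). Once fiber type is known, $\calJ[1]$ is generated by the defining ideal of the symmetric algebra --- the Eagon--Northcott relations \eqref{eq:ENeq}, since $I_n(H[1])$ has expected codimension and is resolved by the Eagon--Northcott complex \cite{EN} --- together with the defining ideal of the special fiber, which is $\mathscr{P}_{n,n+2}+(f_{\Lap})$ by Theorem \ref{thm:eqSFr=1}. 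Your instinct to lean on the structure of $2$-determinantal ideals from \cite{RS} is reasonable but unnecessary here; the determinantal machinery of \cite{BCV} is the tool that closes both gaps simultaneously.
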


\begin{proof}
    The fiber type property and linear power property of $I_n(H[1])$ follow directly from \cite[Theorem 3.7]{BCV} and \cite[Proposition~2.4]{CMRS}. This implies that 
$\calJ[1]$ is generated by the defining ideal of the symmetric algebra together with the defining equations of the special fiber. Since $I_n(H[1])$ has expected codimension by \cite[Proposition~2.4(i)]{CMRS}, it is well-known that the Eagon-Northcott complex resolves $I_n(H[1])$ \cite[Theorem 2]{EN}. Combining this result with Theorem \ref{thm:eqSFr=1}, we conclude that $f_{\Lap}$, together with the Eagon–Northcott and Plücker relations, generates $\calJ[1]$, as claimed.
\end{proof}

As a consequence, we can also recursively compute the Betti numbers of $I_n(H[1])^k$ for all $k$.

\begin{Corollary} For all $k$, we have 
    $$\beta_i^{S_1}(I_n(H[1])^k)=\beta^{S}_i(I_n(\calH)^k)-\binom{2n}{i} \beta_{2n}^{S}(I_n(\calH)^k).$$
\end{Corollary}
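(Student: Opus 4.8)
The plan is to relate the two families of modules by reduction modulo the corner variable $x_{2n+1}$, the single variable that is set to zero when passing from $\calH$ to $H[1]$. Write $S_1=S/(x_{2n+1})$ and note that $I_n(H[1])^k$ is exactly the image of $I_n(\calH)^k$ under $S\twoheadrightarrow S_1$, so $S_1/I_n(H[1])^k=S/\bigl(I_n(\calH)^k+(x_{2n+1})\bigr)$. Tensoring $0\to I_n(\calH)^k\to S\to S/I_n(\calH)^k\to 0$ with $S_1$, and using that $I_n(\calH)^k$ is torsion-free over the domain $S$ (hence $x_{2n+1}$-torsion-free) while $\operatorname{Tor}_1^S(S,S_1)=0$, one obtains a short exact sequence of graded $S_1$-modules
\begin{equation}\label{eq:SESpowers}
0\longrightarrow K_k\longrightarrow A_k\longrightarrow B_k\longrightarrow 0,
\end{equation}
where $A_k:=I_n(\calH)^k\otimes_S S_1$, $B_k:=I_n(H[1])^k$, and $K_k:=\operatorname{Tor}_1^S\bigl(S/I_n(\calH)^k,S_1\bigr)\cong\bigl((I_n(\calH)^k:_S x_{2n+1})/I_n(\calH)^k\bigr)(-1)$ is a twist of the $x_{2n+1}$-torsion of $S/I_n(\calH)^k$. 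Since $x_{2n+1}$ is a nonzerodivisor on $S$ and on $I_n(\calH)^k$, the minimal $S_1$-free resolution of $A_k$ is the reduction modulo $x_{2n+1}$ of the minimal $S$-free resolution of $I_n(\calH)^k$; in particular $\beta_i^{S_1}(A_k)=\beta_i^S(I_n(\calH)^k)$ for every $i$.

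From \eqref{eq:SESpowers} the statement reduces to two points. \textbf{(a)} $K_k\cong\KK(-nk)^{\,\beta_{2n}^S(I_n(\calH)^k)}$, so that $\beta_i^{S_1}(K_k)=\binom{2n}{i}\beta_{2n}^S(I_n(\calH)^k)$ (the minimal $S_1$-free resolution of $\KK$ being the Koszul complex on $x_1,\dots,x_{2n}$). \textbf{(b)} The connecting maps in the long exact $\operatorname{Tor}^{S_1}_\bullet(-,\KK)$-sequence of \eqref{eq:SESpowers} all vanish. Point (b) is where linear powers enter: $I_n(\calH)$ has linear powers (its Rees algebra is Koszul by \cite[Theorem~4.7]{Co}, so \cite[Corollary~3.6]{Blum} applies) and $I_n(H[1])$ has linear powers by Proposition~\ref{prop:r=1 fiber type}; granting (a), all three modules in \eqref{eq:SESpowers} then have linear minimal $S_1$-free resolutions generated in degree $nk$, so $\operatorname{Tor}_i^{S_1}(K_k,\KK)$, $\operatorname{Tor}_i^{S_1}(A_k,\KK)$ and $\operatorname{Tor}_i^{S_1}(B_k,\KK)$ each lie in the single internal degree $nk+i$. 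The terms of the long exact sequence flanking $\operatorname{Tor}_i^{S_1}(A_k,\KK)\to\operatorname{Tor}_i^{S_1}(B_k,\KK)$ (namely $\operatorname{Tor}_{i+1}^{S_1}(B_k,\KK)$ and $\operatorname{Tor}_{i-1}^{S_1}(K_k,\KK)$) then live in the wrong degree and vanish, so the sequence breaks into short exact sequences $0\to\operatorname{Tor}_i^{S_1}(K_k,\KK)\to\operatorname{Tor}_i^{S_1}(A_k,\KK)\to\operatorname{Tor}_i^{S_1}(B_k,\KK)\to 0$. Taking dimensions,
\[
\beta_i^{S_1}(I_n(H[1])^k)=\beta_i^{S_1}(A_k)-\beta_i^{S_1}(K_k)=\beta_i^S(I_n(\calH)^k)-\binom{2n}{i}\beta_{2n}^S(I_n(\calH)^k).
\]

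For (a) I would prove that the $x_{2n+1}$-torsion of $S/I_n(\calH)^k$ coincides with its socle, i.e.\ $(I_n(\calH)^k:_S x_{2n+1})=(I_n(\calH)^k:_S\mathfrak m)$ (the inclusion $\supseteq$ being trivial). Granting this, $K_k\cong\Soc(S/I_n(\calH)^k)(-1)$ is a $\KK$-vector space; and since $I_n(\calH)$ has linear powers, the minimal $S$-free resolution of $S/I_n(\calH)^k$ is linear, so $\Soc(S/I_n(\calH)^k)$ sits in the single degree $nk-1$ with $\dim_\KK\Soc(S/I_n(\calH)^k)=\beta_{2n+1}^S(S/I_n(\calH)^k)=\beta_{2n}^S(I_n(\calH)^k)$, and the twist in \eqref{eq:SESpowers} places $K_k\cong\KK(-nk)^{\,\beta_{2n}^S(I_n(\calH)^k)}$ as required.

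The main obstacle is precisely the identity $(I_n(\calH)^k:_S x_{2n+1})=(I_n(\calH)^k:_S\mathfrak m)$. A natural attack is to show first that $\mathfrak m$ is the only embedded associated prime of $S/I_n(\calH)^k$ (while $x_{2n+1}$ lies in no minimal prime, since $I_n(\calH)$ is prime and $x_{2n+1}\notin I_n(\calH)$), so that the $x_{2n+1}$-torsion has finite length; it then remains to rule out torsion in degrees $\geq nk$, and here applying $\operatorname{Tor}^{S_1}_\bullet(-,\KK)$ to \eqref{eq:SESpowers} already confines $\operatorname{Tor}_i^{S_1}(K_k,\KK)$ to degrees $nk+i$ and $nk+i+1$, so only the vanishing of the graded piece $(K_k)_{nk+1}$ is at issue. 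A more structural alternative is to pass to the Rees algebra: the fiber-type presentations of $\calJ[0]$ and $\calJ[1]$ give $\calR[1]\cong\bigl(\calR[0]/x_{2n+1}\calR[0]\bigr)/(f_{\Lap})$, hence $\bigoplus_k K_k\cong f_{\Lap}\cdot\bigl(\calR[0]/x_{2n+1}\calR[0]\bigr)$ as bigraded modules, and one analyzes the annihilator of $f_{\Lap}$ in $\calR[0]/x_{2n+1}\calR[0]$ using $\deg f_{\Lap}=(0,n)$ together with the Hilbert function of $\calF[0]=\calF(I_n(\calH))$.
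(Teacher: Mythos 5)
Your overall strategy is sound and is in effect an unpacking of the general lemma the paper invokes: the short exact sequence $0\to K_k\to I_n(\calH)^k\otimes_S S_1\to I_n(H[1])^k\to 0$ with $K_k=\operatorname{Tor}_1^S(S/I_n(\calH)^k,S_1)$, the identification of $K_k$ with a twist of the socle, and the degree argument killing the connecting homomorphisms correctly reproduce the content of \cite[Lemma 8.7.8]{BCRV} (the Betti number formula for reduction modulo an almost regular linear form, applied to a module with linear resolution). The paper's own proof consists of exactly two citations: that lemma, and \cite[Corollary 3.18]{Co} for the hypothesis that makes it applicable.

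The genuine gap is the step you yourself flag as ``the main obstacle'': you never establish that $x_{2n+1}$ lies in no associated prime of $S/I_n(\calH)^k$ other than $\mathfrak m$, equivalently that the torsion $(I_n(\calH)^k:_S x_{2n+1})/I_n(\calH)^k$ has finite length for every $k$. Without this, nothing forces $K_k$ to be a $\KK$-vector space concentrated in degree $nk$, and both your point (a) and the vanishing of the connecting maps collapse; note the issue is vacuous for $k=1$ (where $I_n(\calH)$ is prime and the torsion is zero) but is exactly where the content lies for $k\ge 2$. This input cannot be obtained by general nonsense; it is the Hankel-specific fact supplied by \cite[Corollary 3.18]{Co}, which describes $\Ass(S/I_n(\calH)^k)$: every associated prime of every power is one of the Hankel determinantal primes $I_j(\calH)$, $1\le j\le n$, and for $j\ge 2$ these are generated in degree $j\ge 2$, hence contain no variable, so $x_{2n+1}$ avoids every associated prime except $\mathfrak m=I_1(\calH)$. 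Once that is in place your argument closes cleanly: $(0:_{S/I_n(\calH)^k}x_{2n+1})\subseteq H^0_{\mathfrak m}(S/I_n(\calH)^k)$, which by linear powers of $I_n(\calH)$ lives in degrees $\le\reg(S/I_n(\calH)^k)=nk-1$, while $K_k$, being a submodule of $I_n(\calH)^k\otimes_S S_1$, lives in degrees $\ge nk$; hence $K_k$ is concentrated in degree $nk$, equals $\Soc(S/I_n(\calH)^k)(-nk)\otimes\KK(-nk)$-type data, i.e.\ $\KK(-nk)^{\beta_{2n+1}^S(S/I_n(\calH)^k)}=\KK(-nk)^{\beta_{2n}^S(I_n(\calH)^k)}$. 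This regularity bound on $H^0_{\mathfrak m}$ is also the cleanest way to dispose of the graded piece $(K_k)_{nk+1}$ that your sketch leaves ``at issue''. Since neither of your two proposed attacks on the associated-primes statement is carried out, the proposal as written does not constitute a complete proof.
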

\begin{proof}
   The variable  $x_{2n+1}$ is an almost $S/(I_n(\calH)^k)$-regular element for all $k$ if and only if $x_{2n+1}\not \in P$ for any prime ideal $P\in \cup_k \;\mathrm{Ass}(S/(I_n(\calH)^k))$ with $P\neq \mathfrak{m}=(x_1, \ldots, x_{2n+1})$ where $\mathfrak{m}=(x_1,\ldots,x_{2n+1})$ denotes the homogeneous maximal ideal. The result follows from \cite[Lemma 8.7.8]{BCRV} and \cite[Corollary~3.18]{Co}.
\end{proof}

\section{Further questions}
We conclude by discussing some interesting questions that emerge from our work.\par
A natural next step would be to analyze the intermediate cases and gain further insight into the structure of the special fibers of $n$-determinantal ideals of codimension $3$.
\begin{Problem}
    Find the defining equations of $\calF[r]$ for $r=2,\ldots, n-2$.
\end{Problem}
 Based on Remark~\ref{rmk:chainF} and on the structure of the matrices defining our equations (see also Example \ref{ex:f-relation}), it is natural to conjecture that $\calK[r]$ is minimally generated by Pl\"ucker relations and additional relations of degree $n-r+1$ coming from the vanishing of certain minors with similar structure to the ones presented in this paper. The main challenge, however, lies in proving that these relations indeed generate the entire ideal $\calK[r]$.
 \medskip

In this context, it would be interesting and useful to understand the combinatorics underlying the relations described in this paper.
\begin{Problem}
   Describe $\Lap_{\aa}$  and $f_{\Lap}$ in terms of their underlying posets.
\end{Problem}

It is well-known that the Plücker relations \eqref{eq:minors relation} can be read directly from the poset of maximal minors: each incomparable pair in the poset corresponds to a Plücker relation, and the remaining two terms in such a relation are products of comparable minors located symmetrically in upper and lower rank positions with respect to the incomparable one. In particular, all the terms in a Pl\"ucker relation have the same \textit{total rank} in the poset, that is, the sum of the indices appearing in each product is constant. We are interested in finding a similar characterization for the other classes of relations studied in this work. It turns out that, even in these cases, the total rank of all the terms in a relation is constant and can be explicitly computed. For $f_{\Lap}$ this constant is $ \frac{n(n+1)(n+2)}{2}$, while for $\Lap_\aa$, it is $\sum_{a\in\mathbf a}a + \frac{(n+2)(n+3)}{2} + 2$. Moreover, using the isomorphism in Lemma \ref{lem:isom}, one can determine precisely which pairs of minors in the poset give rise to a Laplace relation \eqref{eq:lap} by passing to the corresponding poset of $2$-minors and using the term order introduced in \cite{RS}. However, a complete explicit description of the terms appearing in each of these relations in terms of the poset remains unknown.
\medskip

 Another interesting question regards the fiber type and linear powers properties  of these ideals.

 \begin{Problem}
     Is $\calR[r]$ of fiber type for $r\geq 2$? Does $I_n(H[r])$ have linear powers for $r\geq 2$?
 \end{Problem}

    Note that \cite[Theorem 3.7]{BCV} does not apply to the case $H[r]$ with $r \geq 2$, as the hypothesis $\Ht(I_j(H[r]))\geq \min\{2(n+1-j)+1, 2n+1-r\}$ is not satisfied. Similarly, an inductive argument via \cite[Lemma 2.4]{BCV} cannot be pursued, because the finite-length assumption fails already for $k=2$. Still, computational experiments suggest that $\calR[r]$ is of fiber type for any $r$, and the ideal has linear powers (verified for small values of $k$).

\subsection*{Acknowledgments.} Our work started at the 2024 workshop \textit{Women in Commutative
Algebra III} hosted by Casa Matemática Oaxaca. We thank the organizers of this workshop for bringing our team together. We acknowledge the excellent working conditions
provided by CMO-BIRS.\\
The authors thank Yevgeniya Jonah Tarasova for discussions
concerning the content of the paper. They also thank Alessio Sammartano and Matteo Varbaro for comments on a previous draft of this work. 
KS is partially supported by SERB(ANRF) POWER grant SPG/2022/002099, Govt.
of India. LS is partially supported by SNSF grant TMPFP2\_217223.

 \bibliographystyle{amsalpha}
\bibliography{ref}
\end{document}